\documentclass[reqno,11pt]{amsart}

\usepackage{hyperref}
\usepackage{a4wide,amsthm,amssymb,mathrsfs,setspace,pstricks,booktabs,mathtools,amsmath,amsfonts}
\usepackage{graphicx}
\usepackage{xcolor}
\usepackage{array}
\newtheorem{theorem}{Theorem}[section]
\newtheorem{lemma}[theorem]{Lemma}
\newtheorem{proposition}[theorem]{Proposition}

\theoremstyle{definition}

\newtheorem{conjecture}[theorem]{Conjecture}

\theoremstyle{remark}
\newtheorem{remark}[theorem]{Remark}

\numberwithin{equation}{section}

\newcommand{\R}{\mathbb{R}}

\newcommand{\Rn}{\mathbb{R}^n}
\newcommand{\Sph}{\mathbb S}
\newcommand{\ip}[2]{\langle #1,#2\rangle}
\newcommand{\norm}[1]{\lVert #1\rVert}
\DeclareMathOperator{\supp}{supp}

\DeclareMathOperator{\tr}{tr}
\DeclareMathOperator{\spanop}{span}
\newcommand{\Id}{\mathrm{Id}}

\begin{document}
	
	\title [A sharp isoperimetric bound for the ratio of Dirichlet eigenvalues]{A sharp isoperimetric bound for the ratio of the fifth and fourth eigenvalues of Dirichlet Laplacian}

	\author{Zhen Song}    
	\address{Department of Mathematics, Northeastern University, Shenyang 110004, China}
	\email{songzhen@mail.neu.edu.cn}
	
	\author{Ruixiao Zhang}    
	\address{Department of Mathematics, Northeastern University, Shenyang 110004, China}
	\email{zhangrx@mail.neu.edu.cn}

	\subjclass[2020]{Primary 35P15; Secondary 47A75; 35J05.}

	\keywords{Ratio of Dirichlet eigenvalue; Isoperimetric upper bound; Payne-P\'olya-Weinberger conjecture.}
	
	\begin{abstract}
		Let $\Omega\subset\Rn$ be a bounded domain with $n\geq2$. The well-known Payne-P\'olya-Weinberger conjecture for consecutive eigenvalue ratios asserts that
		$$
		\frac{\lambda_{m+1}(\Omega)}{\lambda_m(\Omega)}\leq\left(\frac{j_{n / 2,1}}{j_{n / 2-1,1}}\right)^2,\quad m=1,2,3,\cdots.
		$$
		Here, $j_{p,k}$ denotes the $k$-th positive zero of the Bessel function $J_p$, and $\lambda_j(\Omega)$ are the Dirichlet eigenvalues of $\Omega$, counted with multiplicity and ordered as
		$$
		0<\lambda_1(\Omega)<\lambda_2(\Omega)\leq\lambda_3(\Omega)\leq\cdots.
		$$
		The cases $m=1,2,3$ were established by Ashbaugh and Benguria in their seminal works. In this paper, we give an affirmative answer to the Payne-P\'olya-Weinberger conjecture for $m=4$ by combining a quadratic-form identity with a Brouwer degree argument. We also extend the corresponding spectral inequality to arbitrary bounded open sets in $\Rn$.
	\end{abstract}

	\maketitle
	
	\section{Introduction}\label{section1}
\renewcommand{\thethm}{\Alph{thm}}
Let $\Omega\subset\R^n$ be a bounded domain with $n\geq2$. Consider the classical Dirichlet eigenvalue problem for the Laplace operator on $\Omega$
\begin{equation}\label{dirichlet problem}
	\begin{cases}
		-\Delta u=\lambda u \quad &\hbox{in}\ \Omega, \\
		 u=0 \quad &\hbox{on}\ \partial\Omega, \\
	\end{cases}
\end{equation}
It is well-known that the spectrum of Dirichlet Laplacian consists of isolated eigenvalues of finite multiplicity accumulating to $\infty.$ Let $\{\lambda_k(\Omega)\}_{k=1}^\infty$ be the Dirichlet eigenvalues of $\Omega$, ordered so that 
$$
0<\lambda_1(\Omega)<\lambda_2(\Omega)\leq\lambda_3(\Omega)\leq\cdots,\quad \lambda_k(\Omega)\to\infty\quad\text{as}\quad k\to\infty,
$$
with a corresponding orthogonal set of eigenfunctions denoted by $\{u_k\}_{k=1}^\infty\subset H_0^1(\Omega)$. In this paper, we aim to find the optimal upper bound of the ratio $\lambda_5(\Omega)/\lambda_4(\Omega)$.

This problem can be traced back to the seminal work of Payne, P\'olya and Weinberger \cite{PaynePolyaWeinberger1956JMP}, where they proved in the two-dimensional case that the ratio of the first two Dirichlet eigenvalues satisfies
\begin{equation}\label{ppw inequality first}
	\frac{\lambda_2(\Omega)}{\lambda_1(\Omega)}\leq 3,\quad \forall\, \Omega\subset \R^2,
\end{equation}
and further conjectured that the optimal upper bound should be the value of $\lambda_2(\Omega)/\lambda_1(\Omega)$ on disk, i.e.,
\begin{equation}\label{ppw conjecture first}
	\frac{\lambda_2(\Omega)}{\lambda_1(\Omega)}\leq\left(\frac{j_{1,1}}{j_{0,1}}\right)^2=\frac{\lambda_2(\mathbb{B}^2)}{\lambda_1(\mathbb{B}^2)},\quad \forall\, \Omega\subset\R^2,
\end{equation}
where $\mathbb{B}^n$ is a ball in $\Rn$ and $j_{p,k}$ denotes the $k$-th positive zero of the Bessel function $J_p$. Later, Thompson \cite{Thompson1969SAM} generalized the inequality \eqref{ppw inequality first} to dimension $n$, obtaining that
\begin{equation}\label{ppw inequality in dimension n by thompson}
	\frac{\lambda_2(\Omega)}{\lambda_1(\Omega)}\leq 1+\frac4n,\quad\forall\,\Omega\subset\R^n,\quad n\geq2,
\end{equation}
and then extended the two-dimensional conjecture \eqref{ppw conjecture first} to general $n$-dimensional case, i.e.,
	\begin{equation}\label{ppw conjecture equation}
		\frac{\lambda_2(\Omega)}{\lambda_1(\Omega)}\leq\left(\frac{j_{n / 2,1}}{j_{n / 2-1,1}}\right)^2=\frac{\lambda_2(\mathbb{B}^n)}{\lambda_1(\mathbb{B}^n)},\quad\forall\,\Omega\subset\R^n,\quad n\geq2.
	\end{equation}
The truth of this conjecture means that one can determine whether a drum is circular or not by knowing its first two tones. Due to its profound significance, this conjecture has been widely studied by many mathematicians, such as Payne, P\'olya and Weinberger \cite{PaynePolyaWeinberger1955CR}, Brands \cite{Brands1964ARMA}, Chiti \cite{Chiti1981BUMIA,Chiti1983SIAM}, de Vries \cite{deVries1967ZNA}, Hile and Protter \cite{HileProtter1980Indiana}, since it was proposed, and was finally proved by Ashbaugh and Benguria in their seminal papers, see \cite{AshbaughBenguria1991BullAMS,AshbaughBenguria1992AnnMath,AshbaughBenguria1992CMP}. 

The general conjecture of Payne, P\'olya and Weinberger in this direction reads as follows.
\begin{conjecture}[Payne-P\'olya-Weinberger Conjecture] \label{ppw conjecture general}
	Let $\Omega\subset\R^n$ be a bounded domain with $n\geq2$. Then
	\begin{equation}\label{ppw conjecture equation general}
		\frac{\lambda_{m+1}(\Omega)}{\lambda_m(\Omega)}\leq\left(\frac{j_{n / 2,1}}{j_{n / 2-1,1}}\right)^2,\quad \text{for }m=1,2,3,\cdots.
	\end{equation}
\end{conjecture}
Combining the method used in \cite{AshbaughBenguria1992AnnMath} and an ingenious nodal domain argument, Ashbaugh and Benguria proved Conjecture \ref{ppw conjecture general} for $m=2$ and $m=3$ in \cite{AshbaughBenguria1991Duke} and \cite{AshbaughBenguria1993Edinburgh}, respectively. Also, the equality can never be attained if we require $\Omega$ to be a domain. It is worth mentioning that in \cite{AshbaughBenguria1993Edinburgh}, they actually proved that for any bounded domain $\Omega$ and any index $m\geq 2$ and $k\geq2$, 
\begin{equation}\label{ashbaugh benguria edin general}
	\frac{\lambda_{kl(m)}(\Omega)}{\lambda_m(\Omega)}\leq\sup_{\Omega'\subset\R^n}\frac{\lambda_{k}(\Omega')}{\lambda_1(\Omega')},
\end{equation}
where $l(m)$ denotes the number of nodal domains of the $m$-th Dirichlet eigenfunction $u_m$ of the Laplacian on $\Omega$. Since $l(2)=2$ for every bounded domain $\Omega$, we obtain
\begin{equation}\label{ashbaugh benguria edin}
	\frac{\lambda_{4}(\Omega)}{\lambda_2(\Omega)}\leq\sup_{\Omega'\subset\R^n}\frac{\lambda_{2}(\Omega')}{\lambda_1(\Omega')}=\left(\frac{j_{n / 2,1}}{j_{n / 2-1,1}}\right)^2,
\end{equation}
which implies the Payne-P\'olya-Weinberger Conjecture \ref{ppw conjecture general} for $m=2$ and $m=3$.

When $m\geq4$, Conjecture \ref{ppw conjecture general} is much more difficult to study. For example, when $m=4$, one may expect to prove this conjecture by using \eqref{ashbaugh benguria edin general} or the nodal domain argument in \cite{AshbaughBenguria1991Duke}. Therefore, we need to prove that $l(3)\geq 5/2$ or $l(4)\geq 5/2$. However, in three or higher dimensions, this condition does not hold in general, as is shown by the $n$-dimensional ball in which case $\lambda_2=\lambda_3=\cdots=\lambda_{n+1}$ and hence $l(2)=l(3)=\cdots=l(n+1)=2$. Thus the nodal domain argument used in \cite{AshbaughBenguria1993Edinburgh} does not work generally, and Conjecture \ref{ppw conjecture general} is still unsolved for $m\geq4$.

When $m=1$, the stronger Payne-P\'olya-Weinberger summation conjecture asserts that for any bounded domain $\Omega\subset \R^n$, $n\geq 2$, there holds
\begin{equation}\label{ppw summation}
\frac{\lambda_2(\Omega)+\cdots+\lambda_{n+1}(\Omega)}{\lambda_1(\Omega)}\leq n\left(\frac{j_{n / 2,1}}{j_{n / 2-1,1}}\right)^2=n\frac{\lambda_2(\mathbb{B}^n)+\cdots+\lambda_{n+1}(\mathbb{B}^n)}{\lambda_1(\mathbb{B}^n)}.
\end{equation}
Obviously, equation \eqref{ppw summation} is stronger than \eqref{ppw conjecture equation}. The corresponding conjecture is much more challenging than the one associated with \eqref{ppw conjecture equation} and has not yet been solved. We refer the interested reader to \cite{Brands1964ARMA,ChenZheng2011JDE,HileProtter1980Indiana,Marcellini1980JDE} and the references therein for recent progress on this conjecture. It is worth mentioning that Ashbaugh and Benguria proved in \cite{AshbaughBenguria1993SIAM} that
$$
\frac{1}{\lambda_2(\Omega)-\lambda_1(\Omega)}+\cdots+\frac{1}{\lambda_{n+1}(\Omega)-\lambda_1(\Omega)} \geq \frac{2 j_{n / 2-1}^2+n(n-4)}{6 \lambda_1(\Omega)}.
$$
By the asymptotic behaviors
$$
\frac{2 j_{n / 2-1}^2+n(n-4)}{6} \sim \frac{n^2}{4}\left[1+\frac{2}{3}(1.8557571) \frac{2^{2 / 3}}{n^{2 / 3}}-\frac{4}{n}+O(n^{-4 / 3})\right]
$$
and
$$
\frac{n}{(j_{n / 2} / j_{n / 2-1})^2-1} \sim \frac{n^2}{4}\left[1+\frac{2}{3}(1.8557571) \frac{2^{2 / 3}}{n^{2 / 3}}-\frac{2}{n}+O(n^{-4 / 3})\right],
$$
they conjectured in \cite{AshbaughBenguria1993SIAM} that
\begin{equation}\label{ab reciprocal gap}
	\frac{\lambda_1(\Omega)}{\lambda_2(\Omega)-\lambda_1(\Omega)}+\cdots+\frac{\lambda_1(\Omega)}{\lambda_{n+1}(\Omega)-\lambda_1(\Omega)} \geq \frac{n}{(j_{n / 2} / j_{n / 2-1})^2-1}
\end{equation}
with equality if and only if $\Omega$ is a ball in $\Rn$. This conjecture can be viewed as a weaken version of the summation conjecture associated with \eqref{ppw summation} in the sense that if \eqref{ppw summation} is valid, so is \eqref{ab reciprocal gap} by the inequality of arithmetic and geometric means. An important progress on this conjecture is that Wang and Xia \cite{WangXia2021AnalPDE} proved for any bounded smooth domain $\Omega$ that
\begin{equation}
	\frac{\lambda_1(\Omega)}{\lambda_2(\Omega)-\lambda_1(\Omega)}+\cdots+\frac{\lambda_1(\Omega)}{\lambda_{n}(\Omega)-\lambda_1(\Omega)} \geq \frac{n-1}{(j_{n / 2} / j_{n / 2-1})^2-1}
\end{equation}
with equality if and only if $\Omega$ is a ball in $\Rn$. This isoperimetric type inequality strongly supports the conjecture \eqref{ab reciprocal gap}. Very recently, Li, Tang and Zhang \cite{LiTangZhang2026} finally proved that every bounded domain $\Omega\subset \Rn$ satisfies
$$
\frac{\lambda_1(\Omega)}{\lambda_2(\Omega)-\lambda_1(\Omega)}+\cdots+\frac{\lambda_1(\Omega)}{\lambda_{n+1}(\Omega)-\lambda_1(\Omega)} \geq \frac{n}{(j_{n / 2} / j_{n / 2-1})^2-1}
$$
with equality if and only if $\Omega$ is a ball in $\Rn$ up to a zero capacity set, and hence confirmed the conjecture \eqref{ab reciprocal gap} proposed by Ashbaugh and Benguria.

In this paper, we study the Payne-P\'olya-Weinberger Conjecture \ref{ppw conjecture general} for $m=4$. The main result of this paper is as follows.
\begin{theorem}\label{main result}
	Let $\Omega$ be a bounded domain in $\Rn$, $n\geq2$. Then
	\begin{equation}\label{main result equation}
		\frac{\lambda_5(\Omega)}{\lambda_4(\Omega)}\leq\left(\frac{j_{n / 2,1}}{j_{n / 2-1,1}}\right)^2.
	\end{equation}
	Moreover, the constant in equation \eqref{main result equation} is optimal.
\end{theorem}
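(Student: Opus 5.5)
We follow the Ashbaugh--Benguria scheme for $\lambda_2/\lambda_1$, but with $u_4$ in place of $u_1$, and we use a degree argument to manufacture the orthogonality conditions that nodal domains supplied for $m=2,3$. Write $\Lambda=(j_{n/2,1}/j_{n/2-1,1})^2$.

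\textbf{Step 1: Rayleigh quotient reduction.} Take trial functions $\phi_i=x_i\,g(|x-p|)\,u$ for $i=1,\dots,n$, where $u$ is a suitable combination of $u_1,\dots,u_4$ (first try $u=u_4$, or a $u$ with $-\Delta u\le\lambda_4u$ in the quadratic-form sense) and $g$ is the Ashbaugh--Benguria radial function built from the Bessel functions $J_{n/2}$ and $J_{n/2-1}$ of the ball, frozen outside radius $1$. If each $\phi_i$ can be made $L^2$-orthogonal to $u_1,\dots,u_4$, the min--max principle gives $\lambda_5\le R[\phi_i]$ for every $i$. Integrating by parts, using $-\Delta u_4=\lambda_4u_4$, and summing over $i$ yields the quadratic-form identity
\[
\sum_i\int|\nabla\phi_i|^2-\lambda_4\sum_i\int\phi_i^2=\int\Bigl(g'^2+\tfrac{n-1}{r^2}g^2\Bigr)u^2,
\]
because the cross terms cancel. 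Hence
\[
\lambda_5-\lambda_4\le\frac{\int B(r)\,u^2}{\int g^2u^2},\qquad B=g'^2+\tfrac{n-1}{r^2}g^2,
\]
so everything hinges on the orthogonality conditions.

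\textbf{Step 2: orthogonality by degree.} There are $4n$ orthogonality conditions but only $n$ translation parameters $p$. Orthogonality to $u_4$ is handled by symmetry, and $u_4$ itself is simply kept in the span. For $u_1,u_2,u_3$ I would enlarge the family. Take $u=a_1u_1+\dots+a_4u_4$ and allow a rotation of the coordinate frame. The goal is a continuous map $F(p,a,R)$, from a sphere times a ball, whose zeros give simultaneous orthogonality. At infinity $F$ behaves like a nonzero multiple of the identity, as in Weinberger's center-of-mass lemma, so it has nonzero Brouwer degree and therefore a zero. Two things need care here. Keeping $u$ in $\mathrm{span}\{u_1,\dots,u_4\}$ guarantees $\int|\nabla u|^2\le\lambda_4\int u^2$, which is the inequality the identity in Step 1 needs. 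The tensor $\int x_ix_jg^2u\,u_k$ must also be used, and its off-diagonal part killed by the rotation, as in the Ashbaugh--Benguria trace argument for $\lambda_2,\lambda_3,\lambda_4$.

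\textbf{Step 3: comparison with the ball.} After scaling so that $\lambda_4$ matches $\lambda_1(\mathbb B^n)$, apply the Chiti comparison to $u$, which is a subsolution with constant $\lambda_4$ in the appropriate symmetrized sense. Then use monotonicity of $B$ (decreasing) and $g$ (increasing), exactly as in \cite{AshbaughBenguria1992AnnMath}, to get
\[
\frac{\int B u^2}{\int g^2u^2}\le(\Lambda-1)\lambda_4 .
\]

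\textbf{Optimality and the main obstacle.} For optimality, take $\Omega_\epsilon$ to be four small balls of radius $\epsilon$ together with a ball of radius $1$, all disjoint and joined by thin tubes. Then $\lambda_1,\dots,\lambda_4$ approximate the unit ball's first eigenvalue (with the $\epsilon$-balls scaled so as not to interfere), or more simply use four equal disjoint balls connected by vanishing tubes, so that $\lambda_5/\lambda_4\to\Lambda$. I expect the main obstacle to be Step 2: building the parameter space and checking that the boundary map has nonzero degree, while keeping $u$ in a class for which Chiti's comparison still applies, since $u$ is no longer an eigenfunction.
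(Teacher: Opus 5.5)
There is a genuine gap, and it sits where you expect it; the two places where your plan must change are the orthogonality count and the identity. In Step 2 you need every $\phi_i$ to be $L^2$-orthogonal to $u_1,\dots,u_4$. That is, the four vectors $\int_\Omega F_p\,u\,u_k\,dx\in\R^n$ ($k=1,\dots,4$, with $F_p=g(|x-p|)\frac{x-p}{|x-p|}$) must all vanish, which is $4n$ scalar equations. Your unknowns are $p\in\R^n$ and $a\in\mathbb S^3$, only $n+3<4n$ parameters. The rotation adds nothing: it only replaces $\phi_i$ by $\sum_jR_{ij}\phi_j$, and the vanishing of each of these vectors is $SO(n)$-invariant. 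Degree theory needs a square system, and this one is overdetermined, so generically it has no solution.

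Moreover, once $u$ is a combination rather than $u_4$ itself, Steps 1 and 3 also fail. The cancellation in Step 1 is the pointwise identity $\sum_i|\nabla(F_iu)|^2=\nabla u\cdot\nabla(g^2u)+\bigl(g'^2+\frac{n-1}{r^2}g^2\bigr)u^2$, combined with $\int\nabla u\cdot\nabla(g^2u)=\lambda_4\int g^2u^2$. The latter is the eigenvalue equation tested against $g^2u$. For $u=\sum_ka_ku_k$ the defect is $\sum_{k,l}a_ka_l\bigl(\frac{\lambda_k+\lambda_l}{2}-\lambda_4\bigr)\int g^2u_ku_l$. This form is indefinite in general: its $(4,4)$ entry vanishes while the $(4,k)$ entries need not. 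So the unweighted bound $\int|\nabla u|^2\le\lambda_4\int u^2$ is not what your identity requires. Chiti's comparison likewise needs a nonnegative $u$ with $-\Delta u\le\lambda u$ pointwise, e.g.\ a first eigenfunction of a nodal domain. Applying it to $u_4$ on its nodal domains is exactly the Ashbaugh--Benguria argument, which needs $l(4)\ge3$ and fails, e.g., for the ball when $n\ge3$.

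The paper supplies three ideas your plan lacks.

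\textbf{An identity valid for all $u$.} Instead of the eigenfunction identity plus Chiti, it uses the quadratic-form identity of Proposition~\ref{energy identity prop}. For every $u\in H^1(\R^n)$ and every centre $A$, the quantity $\gamma^2q_{\alpha^2}(u)-\sum_iq_{\beta^2}(F_{A,i}u)$ is a nonnegative weighted integral. Hence the unweighted condition $q_{\alpha^2}(u)\le0$ suffices, and no rearrangement enters.

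\textbf{A dimension count instead of orthogonality.} The paper never imposes orthogonality to $u_1,\dots,u_4$. It uses only that, if $\lambda_4=\alpha^2$ and $\lambda_5>\beta^2$, then $q_{\beta^2}$ is nonpositive only on subspaces of dimension at most $4$. For unit $v\in\spanop\{u_2,u_3,u_4\}$ it splits $v-tu_1$ into $f_\pm$, with $t$ chosen so that $q_{\alpha^2}(f_+)=q_{\alpha^2}(f_-)\le0$. It then chooses separate centres $A_\pm$ solving only $q_{\beta^2}(F_{A_\pm,i}f_\pm,f_\pm)=0$. This is a square system of $n$ equations in $n$ unknowns for each sign, and there your Weinberger-type behaviour at infinity does give degree $1$. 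Disjoint supports make $\spanop\{f_+,f_-\}$, $W_+$ and $W_-$ pairwise $q_{\beta^2}$-orthogonal, where $W_\pm=\spanop\{F_{A_\pm,i}f_\pm:1\le i\le n\}$. The trace bound together with the count $2+r_++r_-\le4$ then forces each $Q_\pm$ to have exactly one negative direction.

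\textbf{A topological obstruction.} The negative direction of $Q_+$ is corrected by multiples of $f_\pm$ to be orthogonal to $u_1$ and $v$, then projected onto $V$. Over the solution set, this gives a nonvanishing tangent line field on $S(V)\cong\mathbb S^2$, which the hairy-ball type degree argument of Lemma~\ref{contradiction obstruction} rules out. This topological step is what replaces the missing third nodal domain.

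Your optimality example with four equal balls joined by thin tubes is the one the paper uses, and it is fine.
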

Theorem \ref{main result} settles the long-standing Payne-P\'olya-Weinberger Conjecture \ref{ppw conjecture general} for $m=4$. Furthermore, the inequality \eqref{main result equation} can be extended to arbitrary bounded open subsets of $\R^n$.
\begin{theorem}\label{main result for open set}
	Let $\Omega$ be a bounded open set in $\Rn$, $n\geq2$. Then
	\begin{equation}\label{main result equation open set}
		\frac{\lambda_5(\Omega)}{\lambda_4(\Omega)}\leq\left(\frac{j_{n / 2,1}}{j_{n / 2-1,1}}\right)^2.
	\end{equation}
	Moreover, the constant in equation \eqref{main result equation open set} is optimal.
\end{theorem}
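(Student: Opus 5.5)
Theorem \ref{main result for open set} follows from Theorem \ref{main result} once we understand how Dirichlet eigenvalues behave on a disjoint union. Let $\Omega=\bigcup_i\Omega_i$ be the decomposition into connected components, which are countably many bounded domains. The Dirichlet spectrum of $\Omega$, counted with multiplicity, is the union of the spectra of the $\Omega_i$. Write $\Lambda:=(j_{n/2,1}/j_{n/2-1,1})^2$. We must show that $\lambda_5(\Omega)\le\Lambda\lambda_4(\Omega)$. The idea is to exhibit five eigenvalues, counted with multiplicity, all lying in $[0,\Lambda\lambda_4(\Omega)]$. Set $\mu:=\lambda_4(\Omega)$. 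Then $\lambda_5(\Omega)$ is the smallest number $t$ such that the merged spectrum has at least five elements $\le t$.

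The argument splits into cases according to how many of $\lambda_1(\Omega),\dots,\lambda_4(\Omega)$ come from each component. Choose a component $\Omega_j$ that contributes $\lambda_4(\Omega)$ itself, say $\lambda_4(\Omega)=\lambda_k(\Omega_j)$ for some $k\in\{1,2,3,4\}$. The earlier results then give
\[
\lambda_{k+1}(\Omega_j)\le \Lambda\,\lambda_k(\Omega_j)=\Lambda\mu .
\]
For $k=1$ this is the Ashbaugh--Benguria theorem, for $k=2,3$ it is their results \eqref{ashbaugh benguria edin}, and for $k=4$ it is Theorem \ref{main result}.

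Next we count eigenvalues below $\Lambda\mu$. The four eigenvalues $\lambda_1(\Omega),\dots,\lambda_4(\Omega)$ are all $\le\mu\le\Lambda\mu$. Since $\lambda_k(\Omega_j)$ is one of them, the eigenvalue $\lambda_{k+1}(\Omega_j)$ is an additional element of the merged list, distinct as a list entry from those four. This is because $\Omega_j$ contributes exactly its first $k$ eigenvalues among the first four, up to a choice made among ties. To handle ties, choose $\Omega_j$ and the labelling so that the $k$ eigenvalues of $\Omega_j$ used among $\lambda_1(\Omega),\dots,\lambda_4(\Omega)$ are exactly $\lambda_1(\Omega_j),\dots,\lambda_k(\Omega_j)$, and so that $\lambda_4(\Omega)=\lambda_k(\Omega_j)$. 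This is always possible, since the merged ordering can be taken to list each component's eigenvalues in increasing order. Hence there are at least five eigenvalues $\le\Lambda\mu$, and therefore $\lambda_5(\Omega)\le\Lambda\lambda_4(\Omega)$.

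For optimality, take $\Omega$ to be the disjoint union of four balls of radius $1$ together with one ball of radius $r$ slightly larger than $1$, placed far apart. Then $\lambda_1(\Omega),\dots,\lambda_4(\Omega)$ consist of $\lambda_1$ of the large ball and $\lambda_1$ of three unit balls, so $\lambda_4(\Omega)=j_{n/2-1,1}^2$. Also
\[
\lambda_5(\Omega)=\min\bigl\{j_{n/2-1,1}^2,\ \text{the second eigenvalue of the large ball}\bigr\}=j_{n/2-1,1}^2 ,
\]
which only gives ratio $1$. So this configuration is the wrong one. Instead, take $\Omega$ to be the disjoint union of four balls whose sizes are such that the fifth eigenvalue comes from a second eigenvalue. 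Concretely, use one ball $B$ of radius $1$ and three tiny balls of radius $\rho$, chosen so that $\lambda_1$ of a tiny ball is $\le\lambda_1(B)$. The tiny balls cannot be taken too tiny, however, or the ordering changes; the cleanest choice is tiny balls whose $\lambda_1$ is much smaller than $\lambda_1(B)$, that is, radius slightly larger than $1$. So we take three balls of radius $1+\varepsilon$ and one of radius $1$. Then
\[
\lambda_4(\Omega)=\lambda_1(B_1)=j_{n/2-1,1}^2,\qquad \lambda_5(\Omega)=\min\bigl\{\lambda_2(B_{1+\varepsilon}),\ \lambda_2(B_1)\bigr\}\to j_{n/2,1}^2 .
\]
The ratio therefore tends to $\Lambda$, which shows the constant is sharp. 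For Theorem \ref{main result}, which concerns connected domains, we connect these balls by thin tubes of width $\delta\to0$ and use the spectral convergence of dumbbell domains (Rauch--Taylor, or Jimbo) to get domains whose ratio tends to $\Lambda$.

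The main obstacle in this plan is not the reduction above but Theorem \ref{main result} itself, proved for domains via the quadratic-form identity and the degree argument, which we take as given here. Within the reduction, the only delicate points are the tie-breaking in the merged ordering and the justification of the dumbbell spectral convergence used for optimality.
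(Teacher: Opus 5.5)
Your proposal is correct and follows essentially the paper's route. The fifth eigenvalue is $\lambda_{k+1}(D)$ for a component $D$ that contributes $k\in\{1,\dots,4\}$ of the first four eigenvalues of $\Omega$, and it is bounded by $\Lambda\lambda_k(D)\le\Lambda\lambda_4(\Omega)$ using Ashbaugh--Benguria for $k\le3$ and Theorem~\ref{main result} for $k=4$. The paper argues by contradiction with $r=\#\{j:\lambda_j(D)\le\lambda_4(\Omega)\}$, which avoids the tie-breaking step you need in your direct version.

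For optimality, your final example (three balls of radius $1+\varepsilon$ and one of radius $1$) does work. The detour is unnecessary, though: four disjoint unit balls already give $\lambda_4=j_{n/2-1,1}^2$ and $\lambda_5=j_{n/2,1}^2$, so equality holds, exactly as in the paper.
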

As mentioned earlier, the difficulty in proving Theorem \ref{main result} lies in the fact that Courant's nodal domain theorem does not guarantee that the fourth eigenfunction $u_4$ has more than two nodal domains. To overcome this difficulty, we instead choose the test function from the linear space spanned by the first four eigenfunctions and use its positive and negative parts separately. More concretely, for a bounded domain $\Omega$, by rescaling $\Omega$ if necessary, we may assume that $\lambda_4(\Omega)=j_{n / 2-1,1}^2$. Suppose to the contrary that $\lambda_5(\Omega)>j_{n / 2,1}^2$. Define
\begin{equation}\nonumber
	q_\tau^\Omega(f)\coloneqq\int_\Omega(|\nabla f|^2-\tau f^2)\,dx,\quad f\in H_0^1(\Omega),
\end{equation}
and
$
V\coloneqq\operatorname{span}\{u_2,u_3,u_4\}
$.
Let $v\in S(V)\coloneqq\{\varphi\in V:\|\varphi\|_2=1\}$ and $f^\pm=(v-tu_1)^\pm$. For each $v$, choose $t\in\R$ such that both $f^+$ and $f^-$ are non-zero and satisfy
$$
q_{j_{n / 2-1,1}^2}^\Omega(f^+)=q_{j_{n / 2-1,1}^2}^\Omega(f^-)\leq0.
$$
Using the classical radial trial function given in \cite{AshbaughBenguria1992AnnMath} (which will be recalled in Section \ref{section3}), we construct two symmetric matrices. With the help of an energy identity, we may find a function with negative $q_{j_{n / 2,1}^2}^\Omega$-energy that is $L^2$-orthogonal to both $u_1$ and $v$. The projection of this function onto $V$ is non-zero and perpendicular to $v$. Letting $v$ vary over $S(V)\cong \mathbb{S}^2$ brings a topological obstruction into play, which will eventually lead to a contradiction with the assumption that $\lambda_5(\Omega)>j_{n / 2,1}^2$.

The rest of this paper is organized as follows. In Section \ref{section2}, we give some preliminary results that we will use throughout this paper. In Section \ref{section3}, we recall the classical radial trial function given by Ashbaugh and Benguria, present some properties of this function, and then establish an energy identity. In Section \ref{section4}, we balance the energies of the positive and negative parts of the test function and study the associated center equations. In Section \ref{section5}, by combining the energy estimates with a Brouwer degree argument, we prove Theorem \ref{main result}. Finally, the proof of Theorem \ref{main result for open set} will be given in Section \ref{section6}.

\section{Preliminaries}\label{section2}
First, we review some important facts that will be used repeatedly in this paper. For a real function $f$, set $f^+\coloneqq\max\{f,0\}$ and $f^-\coloneqq\max\{-f,0\}$. Therefore, $f=f^+-f^-$ and both the positive part $f^+$ and negative part $f^-$ are non-negative. Let $\Omega\subset \R^n$ be a bounded domain with $n\geq2$. The following lemma is basic and well-known.
\begin{lemma}\label{basic lemma for sobolev function}
	For any $f\in H^1(\Omega)$, we have $f^{\pm}\in H^1(\Omega)$ and
	\begin{equation}\label{nabla f+ and f-}
		\nabla f^+=\mathbf1_{\{f>0\}}\nabla f,
		\qquad \nabla f^-=-\mathbf1_{\{f<0\}}\nabla f,
		\quad\text{a.e. in }\Omega.
	\end{equation}
	The maps $f\mapsto f^\pm$ are continuous in $H^1$, and $f\in H_0^1(\Omega)$ implies $f^\pm\in H_0^1(\Omega)$. If $f\in H_0^1(\Omega)$ and $F\in W^{1,\infty}(\R^n)$, then $Ff\in H_0^1(\Omega)$. Moreover,
	\begin{equation}\label{product of f+ and f- is zero}
		f^+f^-=0,\quad \nabla f^+\cdot\nabla f^-=0,
		\quad\text{a.e. in }\Omega.
	\end{equation}
	Besides, for any $F,G\in W^{1,\infty}(\R^n)$, we also have
	\begin{equation}\label{product of Ff+ and Gf- is zero}
		(Ff^+)(Gf^-)=0,\quad \nabla (Ff^+)\cdot\nabla (Gf^-)=0,
		\quad\text{a.e. in }\Omega.
	\end{equation}
\end{lemma}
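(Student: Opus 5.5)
The plan is to derive every assertion of Lemma \ref{basic lemma for sobolev function} from the chain rule for Lipschitz functions of Sobolev functions and the Stampacchia theorem, working on smooth approximants where needed.

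\textbf{Step 1: the formula for $\nabla f^\pm$.} For $\varepsilon>0$ put $G_\varepsilon(s)=\sqrt{s^2+\varepsilon^2}-\varepsilon$ for $s>0$ and $G_\varepsilon(s)=0$ for $s\le 0$. Each $G_\varepsilon$ is $C^1$ with bounded derivative and $G_\varepsilon(0)=0$. Hence, by the standard chain rule (first for smooth $f$, then by density), $G_\varepsilon(f)\in H^1(\Omega)$ with $\nabla G_\varepsilon(f)=G_\varepsilon'(f)\nabla f$. Letting $\varepsilon\to0$, we have $G_\varepsilon(f)\to f^+$ in $L^2$ and $G_\varepsilon'(f)\nabla f\to \mathbf 1_{\{f>0\}}\nabla f$ in $L^2$ by dominated convergence. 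Since weak derivatives are closed under this convergence, $f^+\in H^1(\Omega)$ and $\nabla f^+=\mathbf 1_{\{f>0\}}\nabla f$. Applying this to $-f$ and using $f^-=(-f)^+$ gives the formula for $\nabla f^-$. Stampacchia's theorem ($\nabla f=0$ a.e.\ on $\{f=0\}$) is not needed here, because the indicator already excludes that set.

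\textbf{Step 2: continuity of $f\mapsto f^\pm$ in $H^1$.} Write $f^+=(f+|f|)/2$, so it suffices to treat $f\mapsto |f|$. This map is $1$-Lipschitz in $L^2$. Suppose $f_k\to f$ in $H^1$. Pass to a subsequence with $f_k\to f$ a.e.\ and $|\nabla f_k|\le h\in L^2$. On $\{f\neq0\}$ we have $\mathrm{sgn}(f_k)\to\mathrm{sgn}(f)$ a.e. On $\{f=0\}$ we have $\nabla f=0$ a.e.\ by Stampacchia, so
$$|\mathrm{sgn}(f_k)\nabla f_k-\mathrm{sgn}(f)\nabla f|\le|\nabla f_k-\nabla f|\quad\text{there}.$$
Dominated convergence then gives $\nabla|f_k|\to\nabla|f|$ in $L^2$. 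Because every subsequence has a further subsequence with the same limit, the whole sequence converges. This is the step I expect to need the most care, since the set $\{f=0\}$ must be handled via Stampacchia.

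\textbf{Step 3: the $H_0^1$ statements.} If $f\in H_0^1(\Omega)$, take $\varphi_k\in C_c^\infty(\Omega)$ with $\varphi_k\to f$. The functions $\varphi_k^+$ are Lipschitz with compact support in $\Omega$, so they lie in $H_0^1(\Omega)$ (mollify slightly). By Step 2, $\varphi_k^+\to f^+$ in $H^1$, and $H_0^1$ is closed, so $f^+\in H_0^1(\Omega)$. For $F\in W^{1,\infty}$, we have
$$\|F\varphi\|_{H^1}\le C\|F\|_{W^{1,\infty}}\|\varphi\|_{H^1}\quad\text{and}\quad \nabla(F\varphi)=\varphi\nabla F+F\nabla\varphi.$$
Moreover, $F\varphi_k$ is Lipschitz with compact support, so $F\varphi_k\in H_0^1(\Omega)$, and $F\varphi_k\to Ff$ in $H^1$. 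Hence $Ff\in H_0^1(\Omega)$.

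\textbf{Step 4: the product identities.} The identity $f^+f^-=0$ holds pointwise. The identity $\nabla f^+\cdot\nabla f^-=0$ a.e.\ follows from Step 1, because the sets $\{f>0\}$ and $\{f<0\}$ are disjoint. For the weighted versions, $(Ff^+)(Gf^-)=FGf^+f^-=0$. The product rule gives $\nabla(Ff^+)=f^+\nabla F+F\nabla f^+$, which vanishes a.e.\ on $\{f\le0\}$ because $f^+=0$ there and, by Step 1, $\nabla f^+=0$ there. Similarly $\nabla(Gf^-)$ vanishes a.e.\ on $\{f\ge0\}$. Since these two sets cover $\Omega$, the dot product $\nabla(Ff^+)\cdot\nabla(Gf^-)$ is zero a.e.
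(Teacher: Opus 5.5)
Your proposal is correct and follows essentially the same route as the paper: the same regularization $\sqrt{s^2+\varepsilon^2}-\varepsilon$ on $\{s>0\}$ for the gradient formula, the same subsequence and dominated-convergence argument for continuity (using $\nabla f=0$ a.e.\ on $\{f=0\}$ on the set where the indicators need not converge), and approximation by $C_c^\infty(\Omega)$ functions for the $H_0^1$ claims. The only difference is minor: you get $f^+\in H_0^1(\Omega)$ directly from the strong convergence $\varphi_k^+\to f^+$ in $H^1$ and the closedness of $H_0^1(\Omega)$, whereas the paper extracts a weakly convergent subsequence and identifies its limit via continuity, so your version is slightly more direct.
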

\begin{proof}
	For any $f\in H^1(\Omega)$, it is standard to show that $f^{\pm}\in H^1(\Omega)$ and that \eqref{nabla f+ and f-} holds by considering the function
	$$
	h_\varepsilon(z)\coloneqq\begin{cases}
		(z^2+\varepsilon^2)^{1/2}-\varepsilon, & z\geq0, \\
		0, & x< 0.
	\end{cases}
	$$
	Moreover, we also have $\nabla f=0$ a.e. on the set $\{f=0\}$, see \cite{Evans2010} for instance. Assume that $f_k\to f$ in $H^1$. Clearly, we have
	$$
	|f_k^+-f^+|\leq|f_k-f|,
	$$
	from which we know $f_k^+\to f^+$ in $L^2$. On the other hand, since
	\begin{equation}
		\nonumber
		\nabla f_k^+-\nabla f^+=\mathbf1_{\{f_k>0\}}(\nabla f_k-\nabla f)+(\mathbf1_{\{f_k>0\}}-\mathbf1_{\{f>0\}})\nabla f,
	\end{equation}
	up to a subsequence, we know $\nabla f_k^+\to \nabla f^+$ in $L^2$ by dominated convergence theorem. Therefore, we know the map $f\mapsto f^+$ is continuous in $H^1$. Similarly, one can prove that the map $f\mapsto f^-$ is also continuous in $H^1$. If we further assume that $f\in H_0^1(\Omega)$, by the definition of $H_0^1(\Omega)$, there exists $f_k\in C_0^\infty(\Omega)$ such that $f_k\to f$ in $H^1(\Omega)$. Obviously, $f_k^+$ is Lipschitz continuous and $\supp f_k^+\subset\subset \Omega$. By the basic properties of mollifier and convolution, we know $f_k^+\in H_0^1(\Omega)$. Besides, since
	$$
	|f_k^+|\leq |f_k|,\quad |\nabla f_k^+|=|\mathbf1_{\{f_k>0\}}\nabla f_k|\leq |\nabla f_k|,
	$$
	we know $\{f_k^+\}_{k=1}^\infty\subset H_0^1(\Omega)$ is a bounded sequence. Then, we may assume that $f_k^+\rightharpoonup h$ in $H_0^1(\Omega)$ for some $h\in H_0^1(\Omega)$. The continuity of the map $f\mapsto f^+$ implies that $h=f^+$, and hence $f^+\in H_0^1(\Omega)$. Since $f^-=(-f)^+$, we know $f^-\in H_0^1(\Omega)$. By a totally similar argument, one can prove that $Ff\in H_0^1(\Omega)$ for any $f\in H_0^1(\Omega)$ and $F\in W^{1,\infty}(\R^n)$. Finally, \eqref{product of f+ and f- is zero} and \eqref{product of Ff+ and Gf- is zero} can be verified directly by \eqref{nabla f+ and f-}.
\end{proof}
Recall that by the min-max principle, we know that the Dirichlet eigenvalues satisfy the following formula
\begin{equation}\label{min-max dirichlet eigenvalue}
	\lambda_k(\Omega)
	=
	\inf_{\substack{W\subset H_0^1(\Omega)\\ \dim W=k}}
	\ \sup_{0\ne f\in W}
	\frac{\int_\Omega|\nabla f|^2\,dx}
	{\int_\Omega f^2\,dx}.
\end{equation}
Moreover, the corresponding eigenfunctions $\{u_k\}_{k=1}^\infty\subset H_0^1(\Omega)$ form an orthogonal basis of $H_0^1(\Omega)$ and an orthonormal basis of $L^2(\Omega)$. For $\tau\in\R$ and $f,h\in H_0^1(\Omega)$, define
\begin{equation}\label{q-energy}
	q_\tau^{\Omega}(f,h)\coloneqq\int_\Omega\nabla f\cdot\nabla h\,dx-\tau\int_\Omega fh\,dx,
	\quad q_\tau^{\Omega}(f)\coloneqq q_\tau^{\Omega}(f,f).
\end{equation}
For any other open set $D\subset\R^n$, we similarly write
\[
q_\tau^{D}(f,h)=\int_D\nabla f\cdot\nabla h\,dx-\tau\int_D fh\,dx,
\quad q_\tau^{D}(f)=q_\tau^{D}(f,f),
\quad\forall f,h\in H_0^1(D).
\]
Let $\ip{f}{h}=\int_\Omega fh\,dx$, then one has
\begin{equation}\label{q-energy expansion}
	q_\tau^{\Omega}(f)=\sum_{j=1}^\infty(\lambda_j(\Omega)-\tau)|\ip{f}{u_j}|^2,
	\quad \forall f\in H_0^1(\Omega).
\end{equation}
\begin{lemma}\label{q-energy definite subspace}
	If $\lambda_k(\Omega)<\tau<\lambda_{k+1}(\Omega)$ and $W_k=\spanop\{u_1,\cdots,u_k\}$, then $q_\tau^{\Omega}$ is negative definite on $W_k$ and
	\begin{equation}\label{main:positivecomplement}
		q_\tau^{\Omega}(f)\ge(\lambda_{k+1}(\Omega)-\tau)\norm{f}_{L^2}^2,
		\quad\forall f\perp W_k.
	\end{equation}
	Furthermore, every linear subspace on which $q_\tau^{\Omega}$ is non-positive has dimension at most $k$.
\end{lemma}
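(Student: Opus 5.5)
The proof reads everything off the spectral expansion \eqref{q-energy expansion}, $q_\tau^\Omega(f)=\sum_j(\lambda_j(\Omega)-\tau)|\ip{f}{u_j}|^2$, together with the fact that $\{u_j\}$ is an orthonormal basis of $L^2(\Omega)$ and an orthogonal basis of $H_0^1(\Omega)$. The three assertions are treated in turn.

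\textbf{Negative definiteness on $W_k$.} Take $f=\sum_{j=1}^k c_ju_j\in W_k$. Then $\ip{f}{u_j}=c_j$ for $j\le k$ and $\ip{f}{u_j}=0$ for $j>k$. Hence
$$q_\tau^\Omega(f)=\sum_{j=1}^k(\lambda_j(\Omega)-\tau)c_j^2.$$
Each coefficient $\lambda_j(\Omega)-\tau$ is at most $\lambda_k(\Omega)-\tau<0$. So $q_\tau^\Omega(f)\le(\lambda_k(\Omega)-\tau)\norm{f}_{L^2}^2<0$ whenever $f\neq0$.

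\textbf{The bound \eqref{main:positivecomplement}.} Take $f\in H_0^1(\Omega)$ with $f\perp W_k$ in $L^2$, so $\ip{f}{u_j}=0$ for $j\le k$. The expansion then runs over $j\ge k+1$ only. For those indices $\lambda_j(\Omega)-\tau\ge\lambda_{k+1}(\Omega)-\tau>0$. By Parseval, $\sum_{j>k}|\ip{f}{u_j}|^2=\norm{f}_{L^2}^2$, which gives the claimed inequality.

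I would check the expansion itself briefly, since the paper records it without comment. Normalize so that $\int\nabla u_i\cdot\nabla u_j=\lambda_j\delta_{ij}$. Then the $u_j/\sqrt{\lambda_j}$ form an orthonormal basis of $H_0^1$ with the Dirichlet inner product. This gives $\int|\nabla f|^2=\sum_j\lambda_j|\ip{f}{u_j}|^2$, and the $L^2$ part of $q_\tau^\Omega$ is Parseval.

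\textbf{Dimension bound.} Suppose $E$ is a linear subspace on which $q_\tau^\Omega\le0$, and $\dim E\ge k+1$. Let $P$ be the $L^2$-orthogonal projection onto $W_k$, restricted to $E$. Since $\dim W_k=k<\dim E$, the map $P|_E$ has a nonzero kernel element $f\in E$, and this $f$ satisfies $f\perp W_k$. Applying \eqref{main:positivecomplement} gives $q_\tau^\Omega(f)\ge(\lambda_{k+1}(\Omega)-\tau)\norm{f}_{L^2}^2>0$, contradicting $q_\tau^\Omega\le0$ on $E$. Hence $\dim E\le k$.

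If $E$ is infinite-dimensional, the same argument applies to any $(k+1)$-dimensional subspace of $E$.

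There is no genuine obstacle here. The only point needing care is that the orthogonality $f\perp W_k$ is in $L^2$, while $f$ must lie in $H_0^1$ for $q_\tau^\Omega$ to make sense; both hold in every case above.
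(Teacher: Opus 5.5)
Your proof is correct and takes the same approach as the paper, which simply asserts that all three claims follow directly from the spectral expansion \eqref{q-energy expansion}. You supply the details the paper omits, including the verification of the expansion and the kernel-of-the-projection argument for the dimension bound.
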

\begin{proof}
	The assertions in this lemma follow from the spectral expansion \eqref{q-energy expansion} directly.
\end{proof}
Finally, we present the scaling rule for Dirichlet eigenvalues under domain dilations. For $s>0$, let $s\Omega\coloneqq\{sx:x\in\Omega\}$. Define
\[
f_s(y)\coloneqq s^{-n/2}f(y/s),\quad y\in s\Omega,\quad f\in H_0^1(\Omega).
\]
By a direct calculation, one can easily verify that this map preserves the $L^2$ norm and multiplies the Dirichlet energy by $s^{-2}$. Thus the following lemma holds.
\begin{lemma}
	For any $s>0$, there holds
	\begin{equation}\label{scaling eigenvalue}
		\lambda_j(s\Omega)=s^{-2}\lambda_j(\Omega),\quad j=1,2,3,\cdots.
	\end{equation}
\end{lemma}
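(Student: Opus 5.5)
The scaling identity follows from the min-max characterization \eqref{min-max dirichlet eigenvalue} once we have a suitable bijection between the form domains. I will work directly with the dilation map $T_s:f\mapsto f_s$, $f_s(y)=s^{-n/2}f(y/s)$, and check that it carries the variational problem on $\Omega$ to the one on $s\Omega$ with every Rayleigh quotient multiplied by $s^{-2}$.

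First, $T_s$ is a linear bijection from $H_0^1(\Omega)$ onto $H_0^1(s\Omega)$, with inverse $T_{1/s}$. It maps $C_0^\infty(\Omega)$ onto $C_0^\infty(s\Omega)$, since the support of $f_s$ is $s\,\supp f$. It is bounded in $H^1$ because of the computations below. By density, $T_s$ therefore sends $H_0^1(\Omega)$ into $H_0^1(s\Omega)$, and the same argument for $T_{1/s}$ gives surjectivity.

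Second, I compute the norms by changing variables $y=sx$, $dy=s^n\,dx$:
\[
\int_{s\Omega}f_s^2\,dy=s^{-n}\int_{s\Omega}f(y/s)^2\,dy=\int_\Omega f^2\,dx .
\]
The chain rule gives $\nabla f_s(y)=s^{-n/2-1}(\nabla f)(y/s)$, so
\[
\int_{s\Omega}|\nabla f_s|^2\,dy=s^{-2}\int_\Omega|\nabla f|^2\,dx .
\]
Hence, for every $f\neq0$, the Rayleigh quotient of $f_s$ on $s\Omega$ equals $s^{-2}$ times the Rayleigh quotient of $f$ on $\Omega$.

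Third, since $T_s$ is a linear bijection, it induces a bijection between $k$-dimensional subspaces $W\subset H_0^1(\Omega)$ and $k$-dimensional subspaces $T_sW\subset H_0^1(s\Omega)$. On corresponding subspaces, the supremum of the Rayleigh quotient over $T_sW$ is $s^{-2}$ times the supremum over $W$. Taking the infimum over all $W$ in \eqref{min-max dirichlet eigenvalue} gives $\lambda_k(s\Omega)=s^{-2}\lambda_k(\Omega)$ for every $k$.

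None of these steps is a real obstacle. The only point that needs a little care is the identification $T_s(H_0^1(\Omega))=H_0^1(s\Omega)$, and the density argument above handles it.
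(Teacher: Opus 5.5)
Your proposal is correct and follows the same route as the paper: the dilation $f\mapsto f_s$ preserves the $L^2$ norm and multiplies the Dirichlet energy by $s^{-2}$, and the min-max formula \eqref{min-max dirichlet eigenvalue} then gives the scaling law. You also fill in details the paper leaves as a ``direct calculation'', namely $T_s(H_0^1(\Omega))=H_0^1(s\Omega)$ and the correspondence between $k$-dimensional subspaces.
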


\section{Trial function and a quadratic-form identity}\label{section3}
As we mentioned before, we can not simply select $u_4$ as the test function. Therefore, in this section, we will establish a quadratic-form energy identity for arbitrary functions in $H^1(\R^n)$ which will be used later to deduce a trace estimate needed in the proof of Theorem \ref{main result}.

From now on, let
\begin{equation}
	\nu=\frac n2-1,\quad\alpha=j_{\nu,1},\quad\beta=j_{\nu+1,1},\quad d=\beta^2-\alpha^2
\end{equation}
for simplicity. By Lemma \ref{scaling eigenvalue}, we may suppose, for contradiction and without loss of generality, that $\lambda_4(\Omega)=\alpha^2$ and $\lambda_5(\Omega)>\beta^2$ up to a domain dilation. Define the radial auxiliary function for $0<r<1$ by
\begin{equation}
	\phi(r)\coloneqq r^{-\nu}J_\nu(\alpha r),\quad
	\chi(r)\coloneqq r^{-\nu}J_{\nu+1}(\beta r).
\end{equation}
The functions $\phi$ and $\chi$ should be understood at $r=0$ by continuous extension and have been used to establish some other inequalities of Dirichlet eigenvalues, see \cite{LiTangZhang2026,WangXia2021AnalPDE} for instance. By the definition of Bessel function, we have
\begin{equation}
	\phi''+\frac{n-1}{r}\phi'+\alpha^2\phi=0
	\label{phi ode}
\end{equation}
and
\begin{equation}
	\chi''+\frac{n-1}{r}\chi'
	+\left(\beta^2-\frac{n-1}{r^2}\right)\chi=0.
	\label{chi ode}
\end{equation}
Both $\phi$ and $\chi$ are positive on $(0,1)$ and vanish at $r=1$. Also noteworthy, the function
$x\mapsto\phi(|x|)$ is the positive first Dirichlet eigenfunction of the unit
ball, with eigenvalue $\alpha^2$.

The strict spectral gap estimate and the monotonicity of the Bessel quotient given below are already  established in \cite{LiTangZhang2026}. We include proofs and present some finer results.
\begin{lemma}
	\label{bessel quotient basic lemma}
	For $n\ge2$, one has
	\begin{equation}
		d>n-1.
		\label{spectral gap alpha2 beta2}
	\end{equation}
	Define the Bessel quotient by
	\begin{equation}
		g(r)\coloneqq\frac{\chi(r)}{\phi(r)},\quad r\in(0,1).
	\end{equation}
	$g(r)$ has a continuous extension to $[0,1]$ with $g(0)=0$ and $g(1)=\gamma>0$. Furthermore, it satisfies
	\begin{equation}
		g''+\left(\frac{n-1}{r}+2\frac{\phi'}{\phi}\right)g'
		+\left(d-\frac{n-1}{r^2}\right)g=0,
		\quad 0<r<1,
		\label{g ode}
	\end{equation}
	and $g'(r)>0$ for $0<r<1$. More precisely, for some $c_0>0$, we have
	\begin{equation}
		g(r)=c_0r+O(r^3),\quad r\downarrow0,
		\label{g origin behavior}
	\end{equation}
	\begin{equation}
		g(r)=\gamma\left(1-\frac{d-n+1}{6}(1-r)^2
		+O((1-r)^3)\right),\quad r\uparrow1,
		\label{g boundary behavior}
	\end{equation}
	and
	\begin{equation}
		g'(r)=\frac{\gamma(d-n+1)}3(1-r)+O((1-r)^2),\quad r\uparrow1.
		\label{gprime boundary behavior}
	\end{equation}
	Extend $g$ by the constant $\gamma$ on $[1,\infty)$.  For $A\in\R^n$, define
	\begin{equation}
		F_A(x)\coloneqq
		\begin{cases}
			g(|x-A|)\dfrac{x-A}{|x-A|},&x\ne A,\\
			0,&x=A,
		\end{cases}
		\label{ana:F-definition}
	\end{equation}
	and write $F_A=(F_{A,1},\cdots,F_{A,n})$ explicitly. Then $F_A\in C^1(\R^n;\R^n)$, $|F_A|\le\gamma$, and the operator norm of its gradient is bounded by a constant independent of $A$. Moreover, both $F_A$ and $DF_A$ depend continuously on $A$ in the uniform norm.
\end{lemma}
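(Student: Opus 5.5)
I would treat the claims of the lemma one at a time, starting with the purely Bessel-function facts. For \eqref{spectral gap alpha2 beta2}, the plan is to use the classical lower bound $j_{\nu+1,1}^2-j_{\nu,1}^2>2(\nu+1)+\ldots$. Concretely, the known inequality $j_{\nu+1,1}^2-j_{\nu,1}^2\ge 2\nu+2+\text{(positive)}$ holds, which follows from Lorch/Ismail-type monotonicity of $j_{\nu,1}^2-\nu^2$ or from the Ashbaugh--Benguria estimates. Here $2\nu+2=n$, so $d>n-1$ follows even from the weaker bound $d\ge n$. If I want to stay self-contained instead, I would argue via Rayleigh quotients: $\beta^2$ is the first eigenvalue of $-\Delta+\frac{n-1}{r^2}$ on radial functions in the unit ball (this is the $\lambda_2$ of the ball). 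Testing that problem with $\phi\cdot g$-type functions, or comparing directly, gives the strict gap.

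The ODE \eqref{g ode} is a direct computation. Substitute $\chi=g\phi$ into \eqref{chi ode} and cancel the $\alpha^2\phi g$ term using \eqref{phi ode}. What remains is $\phi g''+(2\phi'+\frac{n-1}{r}\phi)g'+(\beta^2-\alpha^2-\frac{n-1}{r^2})\phi g=0$; dividing by $\phi$ gives \eqref{g ode}.

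For the endpoint behaviour, near $r=0$ I would use the series of $J_\nu$ and $J_{\nu+1}$: $\phi$ is even and analytic with $\phi(0)>0$, while $\chi=r\cdot(\text{even analytic})$. This gives \eqref{g origin behavior} with $c_0=\frac{(\beta/2)^{\nu+1}/\Gamma(\nu+2)}{(\alpha/2)^\nu/\Gamma(\nu+1)}$. Near $r=1$, both $\phi$ and $\chi$ have simple zeros, so $g$ extends analytically with $\gamma=\chi'(1)/\phi'(1)>0$ (both derivatives are negative). To get the expansion I would plug $g=\gamma+a(1-r)+b(1-r)^2+\ldots$ into \eqref{g ode} multiplied by $\phi$. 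Writing $\phi'/\phi=-\frac1{1-r}+\frac{n-1}{2}+O(1-r)$ (obtained from \eqref{phi ode} as $\phi''(1)=-(n-1)\phi'(1)$), the singular term $2\frac{\phi'}{\phi}g'$ forces $a=0$. The next order then gives $6b\cdot(-1)\cdot\ldots$, i.e.\ $-4b-2b+(d-n+1)\gamma=0$ after careful bookkeeping, so $b=-\gamma\frac{d-n+1}{6}$. This yields \eqref{g boundary behavior}, and differentiating the series gives \eqref{gprime boundary behavior}. I expect the sign bookkeeping of this step to be routine.

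The main obstacle is monotonicity, $g'>0$ on $(0,1)$. I would write \eqref{g ode} in divergence form, $(r^{n-1}\phi^2g')'=-r^{n-1}\phi^2(d-\frac{n-1}{r^2})g$. Set $r_0=\sqrt{(n-1)/d}<1$, which holds by \eqref{spectral gap alpha2 beta2}. For $r<r_0$ the right side is positive because $g>0$, so $r^{n-1}\phi^2g'$ increases from $0$ and $g'>0$ there. For $r>r_0$ the right side is negative, so $w:=r^{n-1}\phi^2g'$ decreases, and $w(1)=0$ since $\phi(1)=0$ and $g'$ is bounded. A decreasing function vanishing at $1$ is positive before $1$; hence $g'>0$ on $(r_0,1)$ as well. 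This is exactly where $d>n-1$ is used.

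For $F_A$ the situation is simpler. $F_A(x)=G(x-A)$ with $G(y)=g(|y|)y/|y|$. $G$ is $C^1$ away from $0$ and from $|y|=1$. At $|y|=1$, $g'(1)=0$ by \eqref{gprime boundary behavior}, so $G$ is $C^1$ across the sphere. At $y=0$, since $g$ is odd-analytic, $G(y)=y\,h(|y|^2)$ is smooth. Then $|G|\le\gamma$ by monotonicity, and $DG$ is continuous and equals $0$ for $|y|>1$, hence bounded. Since $F_A$ is a translate of a fixed uniformly continuous map with compactly supported continuous derivative, both $F_A$ and $DF_A$ vary continuously in $A$ in the sup norm.
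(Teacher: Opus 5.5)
Your outline follows the paper's proof: the substitution $\chi=g\phi$, the series at $r=0$, the divergence form $(r^{n-1}\phi^2g')'=(\frac{n-1}{r^2}-d)r^{n-1}\phi^2g$ with the sign change at $r_0=\sqrt{(n-1)/d}$, and translation invariance for $F_A$. However, one step in your last paragraph is false as written: $DG$ does not vanish for $|y|>1$. Extending $g$ by $\gamma$ makes $g'=0$ there, but $G(y)=\gamma y/|y|$ is not constant, and
\[
DG(y)=\frac{\gamma}{|y|}\bigl(\Id-\theta\theta^{\mathsf T}\bigr)\neq0,\qquad \theta=\frac{y}{|y|},\quad |y|>1.
\]
Your own $C^1$ matching across $|y|=1$ already forces this, since the limit from inside is $\gamma(\Id-\theta\theta^{\mathsf T})\neq 0$. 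So $DG$ is not compactly supported, and the step ``continuous with compact support, hence bounded and uniformly continuous'' rests on a false premise. The conclusions still hold, for the reason the paper gives: $\|DG(y)\|_{\mathrm{op}}\le\gamma/|y|\to0$, so $DG$ is continuous on $\R^n$ and vanishes at infinity. It is therefore bounded and uniformly continuous. This gives the $A$-independent bound, the estimate $\|F_A-F_B\|_\infty\le\|DG\|_\infty|A-B|$, and $\|DF_A-DF_B\|_\infty=\sup_y\|DG(y)-DG(y+A-B)\|_{\mathrm{op}}\to0$.

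There are also some smaller slips. Near $r=1$ the correct expansion is $\phi'/\phi=-\frac1{1-r}-\frac{n-1}2+O(1-r)$, not $+\frac{n-1}2$. With $g=\gamma+a(1-r)+b(1-r)^2+\cdots$, the order-zero balance is $2b+4b+(d-n+1)\gamma=0$. Your displayed $-4b-2b+(d-n+1)\gamma=0$ would give $b$ the opposite sign. Neither slip changes the result, because $a=0$ removes the contribution of the constant term. Your ODE-ansatz route is a valid alternative to the paper's, which divides the Taylor expansions of $\phi$ and $\chi$ built from $\phi'''(1),\chi'''(1)$.

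For $d>n-1$, monotonicity of $j_{\nu,1}^2-\nu^2$ gives exactly $d>2\nu+1=n-1$, not $d\ge n$; note also that $d\ge n$ is the stronger bound, not the weaker one. A self-contained argument needs a lower bound on $\beta^2$. So you must test the $\alpha^2$ problem with $\chi$, as the paper does; trial functions in the $\beta^2$ problem only bound $\beta^2$ from above. Simpler still, your monotonicity identity gives the gap for free. The function $w=r^{n-1}\phi^2g'$ satisfies $w(0^+)=w(1^-)=0$ without using $d>n-1$. Hence $\int_0^1(\frac{n-1}{r^2}-d)r^{n-1}\phi^2g\,dr=0$, which is impossible if $d\le n-1$.
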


\begin{proof}
	\textbf{Step 1: Proof of \eqref{spectral gap alpha2 beta2}.}  Since $r^{n-1}\chi\chi'$ vanishes at $r=0$ and $r=1$, multiplying \eqref{chi ode} by $\chi r^{n-1}$ and then integrating by parts gives
	\begin{equation}
		\beta^2=
		\frac{\int_0^1\chi'(r)^2r^{n-1}\,dr}
		{\int_0^1\chi(r)^2r^{n-1}\,dr}
		+(n-1)
		\frac{\int_0^1\chi(r)^2r^{n-3}\,dr}
		{\int_0^1\chi(r)^2r^{n-1}\,dr}.
		\nonumber
	\end{equation}
	Recall that $\alpha^2$ is the first Dirichlet eigenvalue of the unit ball. By the variational principle and the fact that $r^{-2}>1$ in $(0,1)$, we obtain
	\[
	\beta^2>\alpha^2+n-1.
	\]
	
	\textbf{Step 2: Proof of \eqref{g origin behavior}--\eqref{gprime boundary behavior}.} Recall that the trial function $g(r)$ has been thoroughly studied by Ashbaugh and Benguria \cite{AshbaughBenguria1992AnnMath}, and is shown there to be continuously differentiable on $[0,\infty)$. We refer the reader to \cite{AshbaughBenguria1992AnnMath} for the original derivation. By the power series expansion of the Bessel functions near $r=0$ (see \cite{NISTDLMF10} for instance), we have
	\begin{equation}
		\phi(r)=A_0\left(1-\frac{\alpha^2}{2n}r^2+O(r^4)\right),
		\quad A_0=\frac{\alpha^\nu}{2^\nu\Gamma(\nu+1)}>0,
	\end{equation}
	and
	\begin{equation}
		\chi(r)=D_0r\left(1-\frac{\beta^2}{2(n+2)}r^2+O(r^4)\right),
		\quad D_0=\frac{\beta^{\nu+1}}{2^{\nu+1}\Gamma(\nu+2)}>0.
	\end{equation}
	By taking $c_0=D_0/A_0$, we obtain \eqref{g origin behavior}.
	
	Recall that if a solution $u$ to a second-order homogeneous linear ordinary differential equation satisfies $u(1) = u^\prime(1) = 0$, then $u$ vanish identically. Since $\phi$ and $\chi$ are positive on the interval $(0,1)$, thus we have
	$\phi'(1)<0$ and $\chi'(1)<0$. Let
	\[
	A_1=-\phi'(1)>0,\quad D_1=-\chi'(1)>0,
	\quad \gamma=D_1/A_1.
	\]
	By differentiating the corresponding two equations with respect to $r$ and taking $r=1$, we arrive at
	$$
	\phi'''(1)=\bigl(n(n-1)-\alpha^2\bigr)\phi'(1),\quad
	\chi'''(1)=\bigl(n^2-1-\beta^2\bigr)\chi'(1),
	$$
	and
	\begin{equation}\nonumber
		\phi''(1)=-(n-1)\phi'(1),\quad\chi''(1)=-(n-1)\chi'(1).
	\end{equation}
	By change of variable $\rho=1-r$, the corresponding Taylor expansions are
	\begin{equation}
		\phi(1-\rho)
		=A_1\cdot\rho\left(1+\frac{n-1}{2}\rho
		+\frac{n(n-1)-\alpha^2}{6}\rho^2+O(\rho^3)\right)
	\end{equation}
	\begin{equation}
		\chi(1-\rho)
		=D_1\cdot\rho\left(1+\frac{n-1}{2}\rho
		+\frac{n^2-1-\beta^2}{6}\rho^2+O(\rho^3)\right).
	\end{equation}
	Letting $\rho\to 0$ yields \eqref{g boundary behavior}.	Note that the linear term $1-r$ in \eqref{g boundary behavior} vanishes, 
	which implies \eqref{gprime boundary behavior} immediately. In particular, we also have
	$g'(1)=0$.
	
	\textbf{Step 3: $g$ is monotonic and satisfies \eqref{g ode}.} 
	Substituting $\chi=\phi g$ in \eqref{chi ode} and using
	\eqref{phi ode} gives \eqref{g ode}. Equivalently, we have
	\begin{equation}
		\left(r^{n-1}\phi(r)^2g'(r)\right)'
		=\left(\frac{n-1}{r^2}-d\right)
		r^{n-1}\phi(r)^2g(r).
		\label{g ode general}
	\end{equation}
	Let $H(r)=r^{n-1}\phi(r)^2g'(r)$. The asymptotic behaviors of $g^\prime$ and $\phi$ show that
	$H(0)=H(1)=0$. Thanks to \eqref{spectral gap alpha2 beta2}, we know $r_0=\sqrt{(n-1)/d} \in (0,1)$.  The right-hand side of
	\eqref{g ode general} is strictly positive for $0<r<r_0$ and strictly
	negative for $r_0<r<1$.  Thus
$$
	H(r)=\int_0^r H'(s)\,ds>0,\quad\text{for }0<r\le r_0,
$$
and
$$
	H(r)=-\int_r^1 H'(s)\,ds>0,\quad\text{for }r_0\le r<1.
$$
	Consequently, $g'(r)>0$ for $0<r<1$.
	
	\textbf{Step 4: Properties of $F_A$.} Recall that $F_A$ is just the vector-valued trial map in
	\cite{LiTangZhang2026}. For $x\ne A$, denote $r=|x-A|$ and $\theta=(x-A)/r$. Differentiating $F_A$ away from the center gives
	\begin{equation}
		DF_A(x)=g'(r)\theta\theta^{\mathsf T}
		+\frac{g(r)}r\left(\Id-\theta\theta^{\mathsf T}\right),\quad x\neq A.
		\label{gradient of FA}
	\end{equation}
	As $x\to A$, \eqref{g origin behavior} gives
	\[
	F_A(x)=c_0(x-A)+O(|x-A|^3),
	\]
	and hence $DF_A(A)=c_0\Id$.
	Moreover, by the power series expansion of the Bessel functions near $r=0$, we have
	$$g'(r)=c_0+O(r^2).$$
	Together with \eqref{gradient of FA}, we obtain that
	$$
	DF_A(x)=c_0\Id+O(r^2),\quad x\to A,
	$$
	from which we know that $DF_A$ is continuous at $A$.
	
	As $r\to1-$, we have $g(r)\to\gamma$ and $g'(r)\to0$.
	Therefore,
	$$
	DF_A(A+r\theta)\longrightarrow
	\gamma(\Id-\theta\theta^{\mathsf T})
	$$
	uniformly for $\theta\in\Sph^{n-1}$.
	For $r>1$, the definition of $g$ gives
	$$
	DF_A(x)=\frac{\gamma}{r}
	\left(\Id-\theta\theta^{\mathsf T}\right).
	$$
	Summing up the above, we know $F_A\in C^1(\R^n;\R^n)$. Since $g$ is non-decreasing, we immediately have $|F_A(x)|\leq\gamma$ for all $x\in\mathbb R^n$. Since $g'(r)$ and $g(r)/r$ are bounded for $r>0$, the formula for $DF_A$ gives
	$$
	\|DF_A(x)\|_{\mathrm{op}}\leq C,
	\quad
	C\coloneqq\max\left\{\sup_{r>0}|g'(r)|,\sup_{r>0}\frac{|g(r)|}{r}\right\}<\infty,
	$$
	first for $x\ne A$ and then, by continuity, also for $x=A$.
	In particular, $C$ is independent of $A$.
	
	Notice that for $|x|>1$, the definition of $g$ gives
	$$
	DF_0(x)=\frac{\gamma}{|x|}
	\left(I-\frac{xx^{\mathsf T}}{|x|^2}\right).
	$$
	The matrix in parentheses is an orthogonal projection, so
	$$
	\|DF_0(x)\|_{\mathrm{op}}\leq\frac{\gamma}{|x|}\longrightarrow0\quad\text{as }|x|\to\infty.
	$$ 
	Since $DF_0$ is continuous on $\Rn$ and tends to zero at infinity, it is uniformly continuous on $\Rn$. Finally, the bound on $DF_0$ implies that $F_0$ is globally Lipschitz continuous. Since $F_A(x)=F_0(x-A)$, we obtain that
	$$
	\|F_A-F_B\|_\infty=\sup_{y\in\mathbb{R}^n}|F_0(y)-F_0(y+A-B)|\leq C|B-A|\longrightarrow0
	\quad\text{as }B\to A.
	$$
	Similarly, since $DF_A(x)=DF_0(x-A)$, the uniform continuity of $DF_0$ yields
	$$
	\begin{aligned}
		\|DF_A-DF_B\|_\infty=\sup_{y\in\mathbb{R}^n}\|DF_0(y)-DF_0(y+A-B)\|_{\mathrm{op}}\rightarrow0\quad\text{as }B\to A.
	\end{aligned}
	$$
	Here, the uniform norms are taken over $\mathbb R^n$, using the Euclidean norm for vectors and the operator norm for matrices. The proof is complete.
\end{proof}

Next, we derive a quadratic-form energy identity that will be used to control the traces of two trial matrices in Section \ref{section5}. 

For $\tau\in\R$ and $f,h\in H^1(\R^n)$, define the quadratic-form for the whole space as follows:
$$
q_\tau^{\R^n}(f,h)\coloneqq\int_{\R^n}\nabla f\cdot\nabla h\,dx-\tau\int_{\R^n}fh\,dx,
\quad q_\tau^{\R^n}(f)\coloneqq q_\tau^{\R^n}(f,f).
$$
Notice that if $\widetilde f,\widetilde h\in H^1(\Rn)$ are the zero extensions of $f,h\in H_0^1(\Omega)$, then
\[
q_\tau^{\R^n}(\widetilde f,\widetilde h)=q_\tau^{\Omega}(f,h),\quad q_\tau^{\R^n}(\widetilde f)=q_\tau^{\Omega}(f),
\]
where $q_\tau^{\Omega}$ is the quadratic-form define by \eqref{q-energy}.

Following Li, Tang and Zhang\cite{LiTangZhang2026}, we define the weight $p$ as
\begin{equation}\nonumber
	p(r)\coloneqq \gamma^2-g(r)^2.
\end{equation}
Then $p>0$ on $[0,1)$ and $p=0$ on $[1,\infty)$. By Lemma \ref{bessel quotient basic lemma}, we know
\begin{equation}
	p(r)=\frac{\gamma^2(d-n+1)}3(1-r)^2+O((1-r)^3),
	\quad
	\frac{\phi'(r)}{\phi(r)}=-\frac1{1-r}+O(1),
	\quad r\uparrow 1.
	\label{p expansion}
\end{equation}
When $u$ is the zero extension of an $L^2$-normalized first Dirichlet eigenfunction with eigenvalue $\alpha^2$, the left-hand side of the identity \eqref{energy identity Rn} given in the next proposition is exactly the scalar deficit $\mathfrak{D}$ defined in \cite[(5.8)]{LiTangZhang2026}. Proposition \ref{energy identity prop} reformulates and extends their radial remainder estimate in two respects. First, it gives an exact identity on $\R^n$, including both the interior and exterior remainders. In the first-eigenfunction setting, the interior remainder equals the right-hand side of \cite[(4.15)]{LiTangZhang2026} plus a non-negative integral involving the radial derivative. Dropping this additional integral recovers their inequality. Second, in Section 4 of \cite{LiTangZhang2026}, it is assumed that $u\in H^1(\Rn)$ has compact support and satisfies
$$
\Delta(u^2)=2|\nabla u|^2-2\alpha^2u^2
\quad\text{in }\mathcal D'(\R^n).
$$
Our identity holds for every $u\in H^1(\R^n)$, with neither assumption required. The term $q_{\alpha^2}^{\R^n}(u)$ remains on the left-hand side and need not vanish.

\begin{proposition}[The quadratic-form energy identity]
	\label{energy identity prop}
	For every $A\in\R^n$ and $u\in H^1(\R^n)$, there holds
	\begin{equation}
		\begin{aligned}
			\gamma^2q_{\alpha^2}^{\R^n}(u)-\sum_{i=1}^nq_{\beta^2}^{\R^n}(F_{A,i}u)&=\int_{B_1(A)}p(r)\phi(r)^2\left|\nabla\left(\frac{u(x)}{\phi(r)}\right)\right|^2\,dx\\
			&\quad+\gamma^2\int_{\R^n\setminus B_1(A)}
			\left(d-\frac{n-1}{r^2}\right)u(x)^2\,dx,
			\quad r=|x-A|.
			\label{energy identity Rn}
		\end{aligned}
	\end{equation}
	In particular, for every $u\in H_0^1(\Omega)$,
	\begin{equation}
		\sum_{i=1}^nq_{\beta^2}^{\Omega}(F_{A,i}u)\le\gamma^2q_{\alpha^2}^{\Omega}(u).
		\label{energy identity omega}
	\end{equation}
\end{proposition}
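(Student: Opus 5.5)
By translation invariance ($F_A(x)=F_0(x-A)$) we may take $A=0$ and write $r=|x|$, $\theta=x/r$. First we reduce to $u\in C_c^\infty(\R^n)$. Both sides of \eqref{energy identity Rn} are continuous in $H^1(\R^n)$. On the left this holds because $F_{0,i}$ is bounded with bounded gradient (Lemma \ref{bessel quotient basic lemma}). On the right, the exterior weight $d-(n-1)/r^2$ is bounded on $\{r\ge1\}$. For the interior term we expand
\[
p\phi^2|\nabla(u/\phi)|^2=p|\nabla u|^2-2p\frac{\phi'}{\phi}\,u\,\partial_r u+p\frac{\phi'^2}{\phi^2}u^2 .
\]
By \eqref{p expansion}, $p\phi'/\phi=O(1-r)$ and $p\phi'^2/\phi^2=O(1)$ near $r=1$; near $r=0$, $\phi$ is smooth and positive. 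So all coefficients are bounded and the right side is also $H^1$-continuous. It therefore suffices to prove the identity for smooth compactly supported $u$.

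For such $u$, expand the left side pointwise. Since $\sum_iF_{0,i}^2=g^2$ and $\sum_i\nabla F_{0,i}\otimes\nabla F_{0,i}=DF_0^{\mathsf T}DF_0$, \eqref{gradient of FA} gives
\[
\sum_i|\nabla(F_{0,i}u)|^2=g^2|\nabla u|^2+2gg'\,u\,\partial_ru+\Big(g'^2+\frac{(n-1)g^2}{r^2}\Big)u^2 ,
\]
where the cross term uses $\sum_iF_{0,i}\nabla F_{0,i}=\tfrac12\nabla(g^2)=gg'\theta$. Hence
\[
\text{LHS}=\int\Big[p|\nabla u|^2-2gg'u\partial_ru-\Big(g'^2+\frac{(n-1)g^2}{r^2}\Big)u^2+(\beta^2g^2-\gamma^2\alpha^2)u^2\Big]dx .
\]
On $\{r>1\}$ we have $g=\gamma$, $g'=0$, $p=0$, so the integrand reduces to $\gamma^2(d-(n-1)/r^2)u^2$, which is exactly the exterior term.

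Inside $B_1$ we use the interior expansion of $p\phi^2|\nabla(u/\phi)|^2$ above. Both expressions contain $p|\nabla u|^2$, so we must show
\[
\int_{B_1}\Big[-2gg'u\partial_ru-\Big(g'^2+\tfrac{(n-1)g^2}{r^2}\Big)u^2+(\beta^2g^2-\gamma^2\alpha^2)u^2\Big]
=\int_{B_1}\Big[-2p\tfrac{\phi'}{\phi}u\partial_ru+p\tfrac{\phi'^2}{\phi^2}u^2\Big].
\]
Write $u\partial_ru=\tfrac12\partial_r(u^2)$ and integrate by parts radially. In polar coordinates, for a radial coefficient $a(r)$,
\[
\int_{B_1}a\,\partial_r(u^2)\,dx=[\text{boundary at }r=1]-\int_{B_1}\frac{(r^{n-1}a)'}{r^{n-1}}u^2\,dx .
\]
The boundary and origin terms must be tracked. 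The coefficient of $u\partial_ru$ in the difference is $-2gg'+2p\phi'/\phi$; since $g'(1)=0$ and $p\phi'/\phi\to0$ at $r=1$, it vanishes at $r=1$, and at $r=0$ the factor $r^{n-1}$ kills it. This leaves a pointwise identity between radial coefficients of $u^2$, namely
\[
\frac{1}{r^{n-1}}\Big(r^{n-1}\big(gg'-p\tfrac{\phi'}{\phi}\big)\Big)'-g'^2-\frac{(n-1)g^2}{r^2}+\beta^2g^2-\gamma^2\alpha^2-p\frac{\phi'^2}{\phi^2}=0 .
\]
We verify it by computation using $p=\gamma^2-g^2$, the ODE \eqref{phi ode} in the form $(r^{n-1}\phi')'=-\alpha^2r^{n-1}\phi$, and \eqref{g ode} for $g''$. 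The $\gamma^2$ terms give $-\gamma^2\big[(r^{n-1}\phi'/\phi)'/r^{n-1}+\phi'^2/\phi^2\big]-\gamma^2\alpha^2$, which vanishes because $(\phi'/\phi)'+\phi'^2/\phi^2=\phi''/\phi$. The $g$-terms reduce, after substituting $g''$, to $g^2\big(\beta^2-\alpha^2-d\big)=0$. This algebra is the main place I expect sign and term bookkeeping errors; the delicate analytic point is the vanishing of the boundary term at $r=1$, which rests on \eqref{gprime boundary behavior} and \eqref{p expansion}.

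For the inequality \eqref{energy identity omega}, take the zero extension of $u\in H_0^1(\Omega)$. If $\Omega$ is scaled so that it has the normalization $\lambda_4=\alpha^2$, we cannot assume $\Omega\subset B_1(A)$, so the exterior term need not be nonnegative. Its weight is nonnegative, however, because for $r\ge1$ we have $d-(n-1)/r^2\ge d-(n-1)>0$ by \eqref{spectral gap alpha2 beta2}. The interior term is nonnegative since $p\ge0$. Hence the right side of \eqref{energy identity Rn} is nonnegative, and the identity transfers to $\Omega$ via $q^{\R^n}=q^{\Omega}$ on zero extensions, using that $F_{A,i}u\in H_0^1(\Omega)$ by Lemma \ref{basic lemma for sobolev function}.
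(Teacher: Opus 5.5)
Your proof is correct and is essentially the paper's argument. You use the same expansion of $\sum_i|\nabla(F_{0,i}u)|^2$, the same ingredients (\eqref{phi ode}, \eqref{g ode}, $g'(1)=0$, $p\phi'/\phi=O(1-r)$) and the same density and zero-extension steps. The only difference is bookkeeping: you integrate the combined cross-term coefficient $-2gg'+2p\phi'/\phi$ by parts once on $B_1$, whereas the paper first rewrites $-2gg'u\,\partial_r u$ via $\Delta p$ on all of $\R^n$ and then integrates the expanded square by parts on $B_{1-\varepsilon}$. One wording slip: in your last paragraph you mean that the exterior term need not \emph{vanish}, since by \eqref{spectral gap alpha2 beta2} it is in fact nonnegative.
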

\begin{proof}
	By making the change of variable $x=A+y$, with $u$ and $F_{A,i}$ replaced by $u(A+\cdot)$ and $F_{0,i}$, respectively, the quadratic-forms are preserved. Therefore, it suffices to establish the identity for $A=0$. For simplicity, we write
	$$
	F_i=F_{0,i},\quad r = |x|,\quad \theta = \frac{x}{r},\quad\text{for }r\ne 0.
	$$
	
	\textbf{Step 1: Reduction to the weighted energy.} For $u\in C_c^\infty(\R^n)$, \eqref{gradient of FA} gives
	\begin{equation*}
		\frac{\partial F_i}{\partial x_j}
		=\left(g'(r)-\frac{g(r)}r\right)\theta_i\theta_j
		+\frac{g(r)}r\delta_{ij}.
	\end{equation*}
	By a basic calculation, we have
	$$
	\sum_{i=1}^nF_i^2=g^2,\quad\sum_{i=1}^nF_i\nabla F_i=gg'\theta,\quad \sum_{i=1}^n|\nabla F_i|^2=g'^2+\frac{n-1}{r^2}g^2.
	$$
	Combining the above identities with the identity $\nabla(F_i u)=F_i\nabla u+u\nabla F_i$ and then summing over $i$ yields
	\begin{equation}
		\sum_{i=1}^nq_{\beta^2}^{\R^n}(F_i u)=\int_{\R^n}\biggl[g^2|\nabla u|^2+2gg'u\,\partial_ru+\left(g'^2+\frac{n-1}{r^2}g^2-\beta^2g^2\right)u^2\biggr]\,dx.
		\label{q beta expansion}
	\end{equation}
	Recall that the radial function $x\mapsto p(|x|)$ belongs to $C^1(\R^n)$, vanishes outside the unit ball, and has bounded second derivatives on each side of $\partial B_1$.	In particular, $p(1)=p'(1)=0$. Hence, for any $\varphi\in C_c^\infty(\R^n)$, integration by parts on $B_1$ gives
	\begin{equation*}
		\int_{\R^n}p\,\Delta\varphi\,dx
		=\int_{B_1}(\Delta p)\varphi\,dx
		+\int_{\partial B_1}\bigl(p\,\partial_r\varphi-p'\varphi\bigr)\,dS
		=\int_{B_1}(\Delta p)\varphi\,dx.
	\end{equation*}
	Since $p'=-2gg'$, integration by parts again yields
	\[
	\int_{\R^n}-2gg'u\,\partial_ru\,dx
	=\int_{\R^n}\nabla p\cdot u\nabla u\,dx
	=-\frac12\int_{\R^n}(\Delta p)u^2\,dx.
	\]
	Together with \eqref{q beta expansion}, we rewrite
	\begin{equation}
		\gamma^2q_{\alpha^2}^{\R^n}(u)-\sum_{i=1}^nq_{\beta^2}^{\R^n}(F_i u)=\int_{\R^n}\left(p|\nabla u|^2+W(r)u^2\right)\,dx,
		\label{qbeta qalpha difference}
	\end{equation}
	where
	\begin{equation}
		W=\beta^2g^2-\alpha^2\gamma^2-g'^2-\frac{n-1}{r^2}g^2
		-\frac12\Delta p.
		\label{W def}
	\end{equation}
	Note that $W(1)$ does not affect the integral, since the sphere has Lebesgue measure zero and $\Delta p$ has no singularity on $\partial B_1$. Besides, it is not hard to verify that
	\[
	-\frac12\Delta p
	=g'^2+gg''+\frac{n-1}{r}gg',\quad 0<r<1,
	\]
	and
	$$g=\gamma,\quad p=0,\quad r>1.$$
	We may then simplify $W(r)$ as follows
	\begin{equation}\label{W new}
		W(r) = 
		\begin{cases}
			-\alpha^2p+p'\frac{\phi'}{\phi},\,\, &0<r<1,\\
			\gamma^2\left(d-\frac{n-1}{r^2}\right),\,\, &r>1.
		\end{cases}
	\end{equation}
	
	\textbf{Step 2: \eqref{energy identity Rn} holds for smooth function with compact support.} First, we fix $0<\varepsilon<1$ and expand the square on $B_{1-\varepsilon}$:
	\begin{align*}
		\int_{B_{1-\varepsilon}}
		p\phi^2\left|\nabla\frac u\phi\right|^2\,dx
		={}&\int_{B_{1-\varepsilon}}
		\left(p|\nabla u|^2
		-p\nabla(u^2)\cdot\frac{\nabla\phi}{\phi}
		+p u^2\frac{|\nabla\phi|^2}{\phi^2}\right)\,dx\\
		={}&\int_{B_{1-\varepsilon}}
		\left(p|\nabla u|^2
		+u^2\left[\frac{\nabla p\cdot\nabla\phi}{\phi}
		+p\frac{\Delta\phi}{\phi}\right]\right)\,dx\\
		&-\int_{\partial B_{1-\varepsilon}}
		p u^2\frac{\phi'}{\phi}\,dS\\
		={}&\int_{B_{1-\varepsilon}}
		\left(p|\nabla u|^2+
		\left[-\alpha^2p+p'\frac{\phi'}{\phi}\right]u^2\right)\,dx
		-\int_{\partial B_{1-\varepsilon}}
		p u^2\frac{\phi'}{\phi}\,dS.
	\end{align*}
	By \eqref{p expansion}, we know	$p\phi'/\phi=O(1-r)$ near $r=1$. Letting $\varepsilon\to0$, we obtain that
	\begin{equation}
		\int_{B_1}\left(p|\nabla u|^2+\left[-\alpha^2p+p'\frac{\phi'}{\phi}\right]u^2\right)\,dx=\int_{B_1}p\phi^2\left|\nabla\frac u\phi\right|^2\,dx.
		\label{first term expansion}
	\end{equation}
	Combining \eqref{qbeta qalpha difference}, \eqref{W new}, and \eqref{first term expansion} proves \eqref{energy identity Rn} for smooth function $u$ with compact support.
	
	\textbf{Step 3: \eqref{energy identity Rn} holds for functions belong to $H^1(\R^n)$.} On $B_1$, define the operator $\mathcal{T}$ by
	$$
	\mathcal T u\coloneqq\sqrt p\,\nabla u-\sqrt p\,\frac{\nabla\phi}{\phi}u.
	$$
	From the regularity, positivity of $\phi$ and the asymptotic
	\[
	\sqrt p=O(1-r)\quad\text{and}\quad \frac{\phi'}{\phi}=O((1-r)^{-1})
	\]
	near the sphere, we conclude that both coefficients of $\mathcal{T}$ are bounded. Thus $\mathcal T:H^1(\R^n)\to L^2(B_1;\R^n)$ is bounded. Since $\phi$ is smooth and positive in $B_1$, one has $u/\phi\in H^1_{\mathrm{loc}}(B_1)$ and
	\begin{equation*}
		\mathcal T u=\sqrt p\,\phi\,\nabla(u/\phi)\quad\text{a.e. in }B_1.
	\end{equation*}
	Therefore,
	$$
	\int_{B_1}p\phi^2\left|\nabla\left(\frac u\phi\right)\right|^2\,dx=\|\mathcal T u\|_{L^2(B_1)}^2\leq C\|u\|_{H^1(\R^n)}^2.
	$$
	It is worthy pointing out that the weighted integral is finite even without requiring $u/\phi\in H^1(B_1)$.
	
	Choose $u_j\in C_c^\infty(\R^n)$ such that $u_j\to u$ in $H^1(\R^n)$. The boundedness of $\mathcal T$ and Lemma \ref{bessel quotient basic lemma} imply
	\begin{equation}
		\mathcal T u_j\rightarrow\mathcal T u
		\quad\text{in }L^2(B_1;\R^n)
	\end{equation}
	and
	$$
	F_i u_j\rightarrow F_i u\quad\text{in }H^1(\R^n),\quad i=1,\cdots,n.
	$$
	The integral over $B_1$ on the right-hand side of \eqref{energy identity Rn} equals to $\|\mathcal T u_j\|_{L^2(B_1;\R^n)}^2$ and hence converges to the corresponding integral for $u$. Moreover,
	$$
	\|u_j^2-u^2\|_{L^1(\R^n)}\le\bigl(\|u_j\|_{L^2(\R^n)}+\|u\|_{L^2(\R^n)}\bigr)\|u_j-u\|_{L^2(\R^n)}\rightarrow0.
	$$
	Since the coefficient $\gamma^2\bigl(d-(n-1)/r^2\bigr)$ is bounded for $r\ge1$, the integral over $\R^n\setminus B_1$ on the right-hand side of \eqref{energy identity Rn} also converges to the corresponding integral for $u$.
	
	Finally, the strong $H^1$ convergence of $u_j$ and $F_i u_j$ implies convergence of all quadratic-forms on the left-hand side of \eqref{energy identity Rn}. Therefore, applying the identity to $u_j$ and passing to the limit proves the desired identity for every $u\in H^1(\R^n)$.

	\textbf{Step 4: Proof of \eqref{energy identity omega}}. By \eqref{spectral gap alpha2 beta2}, the coefficient satisfies $d-(n-1)/r^2\ge d-(n-1)>0$ for $r\ge1$. It is clear that 
	\[
	\sum_{i=1}^nq_{\beta^2}^{\R^n}(F_{A,i}v)\le\gamma^2q_{\alpha^2}^{\R^n}(v),\quad v\in H^1(\R^n).
	\]
	Fix $u\in H_0^1(\Omega)$ and let $\widetilde u$ denote its zero extension. Then, $\widetilde u\in H^1(\R^n)$ and $\nabla\widetilde u=\mathbf1_\Omega\nabla u$ almost everywhere. Each $F_{A,i}$ is bounded and Lipschitz, so	$F_{A,i}u\in H_0^1(\Omega)$ by Lemma \ref{basic lemma for sobolev function}. Consequently, we have
	$$
		q_{\alpha^2}^{\R^n}(\widetilde u)=q_{\alpha^2}^{\Omega}(u)
	$$
	and
	$$
	q_{\beta^2}^{\R^n}(F_{A,i}\widetilde u)=q_{\beta^2}^{\Omega}(F_{A,i}u),\quad i=1,\cdots,n.
	$$
	The proof is complete.
\end{proof}

\section{Energy balance and the choice of centers}\label{section4}

Define
\[
V\coloneqq\spanop\{u_2,u_3,u_4\},\quad S(V)\coloneqq\{\varphi\in V:\norm{\varphi}_{L^2(\Omega)}=1\}.
\]
Thus $S(V)$ is a two-dimensional sphere. For each $v\in S(V)$, we first choose $t\in\R$ such that the positive and negative parts of 
$v-tu_1$ have equal $q_{\alpha^2}^{\Omega}$-energy. We then find centers $A_+$ and $A_-$ satisfying
\[
q_{\beta^2}^{\Omega}(F_{A_\pm,i}(v-tu_1)^\pm,(v-tu_1)^\pm)=0,
\quad i=1,\cdots,n.
\]
Set
\begin{equation}\label{definition of e original}
	e=e(v,t)\coloneqq v-tu_1,\quad
	f_+\coloneqq e^+,\quad f_-\coloneqq e^-,\quad
	G(v,t)\coloneqq q_{\alpha^2}^{\Omega}(f_+)-q_{\alpha^2}^{\Omega}(f_-).
\end{equation}
Here $e^+\coloneqq\max\{e,0\}$ and $e^-\coloneqq\max\{-e,0\}$, so that both $f_+$ and $f_-$ are non-negative. Thus, $f_+$ and $f_-$ have equal $q_{\alpha^2}^{\Omega}$-energy precisely when $G(v,t)=0$. The following lemma shows that such a parameter $t$ exists for every $v\in S(V)$ and that all possible choices of $t$ are uniformly bounded.

\begin{lemma}\label{equal energy lemma}
	There exists a positive constant $S>0$ such that
	\begin{equation}
		G(v,t)>0,\quad\text{for }\,\,t\geq S,\qquad	G(v,t)<0,\quad\text{for }\,\,t\leq-S,
		\label{threshold for G}
	\end{equation}
uniformly for $v\in S(V)$. The set
	\begin{equation}
		K=\{(v,t)\in S(V)\times\R:G(v,t)=0\}
		\label{balance set K}
	\end{equation}
	is compact and lies in $S(V)\times(-S,S)$. Moreover, for every $v\in S(V)$, $K$ has a non-empty fiber $K_v\coloneqq\{t\in \R:(v,t)\in K\}$. At every point of $K$, there holds
	\begin{equation}
		f_+\ne0,\quad f_-\ne0,\quad
		q_{\alpha^2}^{\Omega}(f_+)=q_{\alpha^2}^{\Omega}(f_-)=\frac12q_{\alpha^2}^{\Omega}(e)\leq0.
		\label{energy balance formula}
	\end{equation}
	Consequently, there are constants $r_*>0$ and $\mu_*=dr_*>0$ such that for every $(v,t)\in K$,
	\begin{equation}
			\norm{f_\pm(v,t)}_{L^2(\Omega)}^2\geq r_*,\quad \mu_\pm(v,t)\coloneqq -q_{\beta^2}^{\Omega}(f_\pm(v,t))\geq\mu_*.
		\label{mass lower bound}
	\end{equation}
	Moreover, the inner products $\ip{u_1}{f_\pm}$ also have a uniform positive	lower bound on $K$.
\end{lemma}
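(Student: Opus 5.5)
We first establish continuity of $G$ on $S(V)\times\R$, and then use the disjoint-support splitting.

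\textbf{Continuity.} The map $(v,t)\mapsto e(v,t)$ is continuous into $H_0^1(\Omega)$. By Lemma~\ref{basic lemma for sobolev function}, $e\mapsto e^\pm$ is continuous in $H^1$. Hence $G$ is continuous.

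\textbf{Splitting.} By \eqref{product of f+ and f- is zero}, $q_{\alpha^2}^\Omega(e)=q_{\alpha^2}^\Omega(f_+)+q_{\alpha^2}^\Omega(f_-)$. Since $e\in\spanop\{u_1,\dots,u_4\}$ and $\lambda_4=\alpha^2$, the expansion \eqref{q-energy expansion} gives
\[
q_{\alpha^2}^\Omega(e)=\sum_{j\le4}(\lambda_j-\alpha^2)\ip{e}{u_j}^2\le (\lambda_1-\alpha^2)t^2\le0 .
\]
So at a zero of $G$, both halves equal $\tfrac12 q_{\alpha^2}^\Omega(e)\le0$.

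\textbf{Nonvanishing of $f_\pm$ on $K$.} Suppose $f_-=0$ at a point of $K$. Then $q_{\alpha^2}^\Omega(f_+)=0$, so $q_{\alpha^2}^\Omega(e)=0$. This forces $t=0$ and $e=v\in\spanop\{u_j:\lambda_j=\alpha^2\}$ with $v\ge0$. But $v\perp u_1$ and $u_1>0$, so $v=0$, contradicting $\|v\|=1$.

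\textbf{Threshold $S$.} Write $e=-t(u_1-v/t)$. As $t\to+\infty$, the function $u_1-v/t$ tends to $u_1$ in $H^1$, uniformly in $v\in S(V)$, because $S(V)$ is compact in a finite-dimensional space. By $H^1$-continuity of the positive and negative parts,
\[
t^{-2}q_{\alpha^2}^\Omega(f_-)\to q_{\alpha^2}^\Omega(u_1)=\lambda_1-\alpha^2<0,
\qquad
t^{-2}q_{\alpha^2}^\Omega(f_+)\to q_{\alpha^2}^\Omega(u_1^-)=0 .
\]
Hence $t^{-2}G(v,t)\to \alpha^2-\lambda_1>0$ uniformly in $v$. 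This holds because $\lambda_1<\lambda_4=\alpha^2$ (if $\lambda_1=\alpha^2$ then $\lambda_1=\lambda_4$, impossible). The uniformity follows from a compactness and contradiction argument on sequences $(v_k,t_k)$ with $t_k\to\infty$. The case $t\to-\infty$ is symmetric. This gives $S$.

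\textbf{Fibers and compactness.} For each $v$, the intermediate value theorem on $[-S,S]$ gives $K_v\neq\emptyset$. The set $K$ is closed because $G$ is continuous, and it is contained in $S(V)\times[-S,S]$; hence $K$ is compact. In fact $K\subset S(V)\times(-S,S)$ by the strict inequalities \eqref{threshold for G}.

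\textbf{Uniform bounds.} The main obstacle is getting the bounds in \eqref{mass lower bound} uniformly. The maps $(v,t)\mapsto\|f_\pm\|_{L^2}^2$ and $(v,t)\mapsto\ip{u_1}{f_\pm}$ are continuous on the compact set $K$ and positive there: $f_\pm\ne0$, $f_\pm\ge0$, and $u_1>0$ a.e. Their minima $r_*$ and the lower bound for the inner products are therefore positive.

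For $\mu_\pm$, compute
\[
-q_{\beta^2}^\Omega(f_\pm)=-q_{\alpha^2}^\Omega(f_\pm)+(\beta^2-\alpha^2)\|f_\pm\|^2\ge d\,r_* .
\]
This uses $q_{\alpha^2}^\Omega(f_\pm)\le0$ on $K$, and gives $\mu_*=dr_*$.
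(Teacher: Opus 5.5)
Your proposal is correct and follows the paper's proof essentially step by step. Both arguments use continuity of $G$ via Lemma~\ref{basic lemma for sobolev function}, the splitting $q_{\alpha^2}^\Omega(e)=q_{\alpha^2}^\Omega(f_+)+q_{\alpha^2}^\Omega(f_-)\le(\lambda_1-\alpha^2)t^2$, and the uniform limit $t^{-2}G(v,t)\to\pm(\alpha^2-\lambda_1)$ obtained from $H^1$-continuity of $h\mapsto h^\pm$ near $\mp u_1$; they then use the intermediate value theorem, the ``$t=0$ and $v\perp u_1$ is one-signed'' contradiction for $f_\pm\ne0$ (your omitted case $f_+=0$ is identical with $v\le0$), and compactness of $K$ for $r_*$, $\mu_*=dr_*$ and the lower bound on $\ip{u_1}{f_\pm}$.
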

\begin{proof}
	Recall that the eigenfunctions are $L^2$-orthonormal, $u_1>0$ almost everywhere in $\Omega$, and $\lambda_1<\alpha^2$ while $\lambda_j\le\alpha^2$ for $j=2,3,4$.

	\textbf{Step 1: Continuity of the parameter-dependent functions.} For $w=\sum_{j=2}^4c_ju_j\in V$, it is easy to find out that
	$$
	\norm{w}_{H^1(\Omega)}^2=\sum_{j=2}^4(1+\lambda_j)c_j^2\leq(1+\alpha^2)\norm{w}_{L^2(\Omega)}^2.
	$$
	Consequently, for $(v,t),(\widetilde v,\widetilde t)\in S(V)\times\R$,
	\begin{equation}
		\begin{aligned}
			\norm{e(v,t)-e(\widetilde v,\widetilde t)}_{H^1(\Omega)}\leq\sqrt{1+\alpha^2}\norm{v-\widetilde v}_{L^2(\Omega)}+|t-\widetilde t|\norm{u_1}_{H^1(\Omega)}.
		\end{aligned}\label{continuity of e}
	\end{equation}
	Then, we know that the map $(v,t)\mapsto e(v,t)$ is continuous and therefore, the maps $(v,t)\mapsto f_+(v,t)$ and $(v,t)\mapsto f_-(v,t)$ are also  continuous by Lemma \ref{basic lemma for sobolev function}. The quadratic-form is continuous as well. Indeed, for any $h,k\in H_0^1(\Omega)$,
	$$
	\begin{aligned}
		\bigl|q_{\alpha^2}^{\Omega}(h)-q_{\alpha^2}^{\Omega}(k)\bigr|\leq\bigl(\norm{\nabla h}_{L^2}+\norm{\nabla k}_{L^2}\bigr)\norm{\nabla(h-k)}_{L^2}+\alpha^2\bigl(\norm{h}_{L^2}+\norm{k}_{L^2}\bigr)\norm{h-k}_{L^2}.
	\end{aligned}
	$$
	It follows that $G$ is jointly continuous in $(v,t)$. Notice that the following estimate holds
	\begin{equation}
		\sup_{v\in S(V)}\norm{v}_{H^1(\Omega)}\leq\sqrt{1+\alpha^2}.\label{estimate for v}
	\end{equation}
	
	\textbf{Step 2: The sum of the $q_{\alpha^2}^{\Omega}$-energy.} Since $e=f_+-f_-$ and
	$$
	f_+f_-=0,\quad \nabla f_+\cdot\nabla f_-=0,\quad\text{a.e. in }\Omega,
	$$
	we have $q_{\alpha^2}^{\Omega}(f_+,f_-)=0$ and hence
	\[
	q_{\alpha^2}^{\Omega}(e)=q_{\alpha^2}^{\Omega}(f_+)+q_{\alpha^2}^{\Omega}(f_-).
	\]
	Furthermore, since $v\perp u_1$ in $L^2(\Omega)$, we know
	\[
	q_{\alpha^2}^{\Omega}(v,u_1)=(\lambda_1-\alpha^2)\ip{v}{u_1}=0.
	\]
	Writing $v=\sum_{j=2}^4c_ju_j$, we also have
	\[
	q_{\alpha^2}^{\Omega}(v)
	=\sum_{j=2}^4(\lambda_j-\alpha^2)c_j^2\le0.
	\]
	Using $\norm{u_1}_{L^2(\Omega)}=1$, we therefore obtain for every $(v,t)\in S(V)\times\R$ that
	\begin{equation}
			q_{\alpha^2}^{\Omega}(f_+)+q_{\alpha^2}^{\Omega}(f_-)=q_{\alpha^2}^{\Omega}(e)=q_{\alpha^2}^{\Omega}(v)-(\alpha^2-\lambda_1)t^2\leq0.
		\label{energy split}
	\end{equation}

	\textbf{Step 3: Uniform sign for large positive and negative $t$.} For $t>0$, we know by \eqref{estimate for v} that as $|t|\to\infty$,
	$$
	\sup_{v\in S(V)}\left\|{\frac{e(v,t)}t+u_1}\right\|_{H^1(\Omega)}=\sup_{v\in S(V)}\frac{\norm{v}_{H^1(\Omega)}}t\leq\frac{\sqrt{1+\alpha^2}}t\rightarrow0.
	$$
	For $\varepsilon>0$, by Lemma \ref{basic lemma for sobolev function}, we know there exists $\eta>0$, such that if $\|h+u_1\|_{H^1(\Omega)}\leq \eta$, then
	$$
	\norm{h^+}_{H^1(\Omega)}=\norm{h^+-(-u_1)^+}_{H^1(\Omega)}<\varepsilon,\quad \norm{h^--u_1}_{H^1(\Omega)}=\norm{h^--(-u_1)^-}_{H^1(\Omega)}<\varepsilon.
	$$
	Here, we have used the fact that $u_1$ is non-negative in $\Omega$. Since for $t>0$, we have
	$$
	\left(\frac{e(v,t)}t\right)^\pm=\frac{f_\pm(v,t)}t,
	$$
	thus the above argument shows that
	\[
	\frac{f_+(v,t)}t\rightarrow0\quad\text{in }H_0^1(\Omega),\quad\frac{f_-(v,t)}t\rightarrow u_1\quad\text{in }H_0^1(\Omega),
	\]
	uniformly for $v\in S(V)$. By the homogeneity of the quadratic-form,
	\[
	\frac{G(v,t)}{t^2}=q_{\alpha^2}^{\Omega}\left(\frac{f_+(v,t)}t\right)-q_{\alpha^2}^{\Omega}\left(\frac{f_-(v,t)}t\right).
	\]
	The continuity of the quadratic-form implies that
	\begin{equation}
		\frac{G(v,t)}{t^2}\rightarrow-q_{\alpha^2}^{\Omega}(u_1)=\alpha^2-\lambda_1>0,\quad t\to+\infty,
		\label{limit of G/t^2}
	\end{equation}
	uniformly for $v\in S(V)$.
	
	For negative $t$, write $t=-s$ with $s>0$, then
	$$
	\frac{e(v,-s)}s=\frac vs+u_1,\quad\sup_{v\in S(V)}\left\|\frac{e(v,-s)}s-u_1\right\|_{H^1(\Omega)}
	\le\frac{\sqrt{1+\alpha^2}}s.
	$$
	By a similar argument, we have
	$$
	\frac{f_+(v,-s)}s\rightarrow u_1\quad\text{in }H_0^1(\Omega),\quad \frac{f_-(v,-s)}s\rightarrow0
	\quad\text{in }H_0^1(\Omega),
	$$
	uniformly in $v$. Since $s^2=t^2$, we obtain that
	\begin{equation}
		\frac{G(v,t)}{t^2}\rightarrow q_{\alpha^2}^{\Omega}(u_1)=\lambda_1-\alpha^2<0,\quad t\to-\infty,
		\label{limit of G/t^2 negative}
	\end{equation}
	uniformly for $v\in S(V)$. Therefore, we can choose $S\ge1$, independent of $v\in S(V)$, such that
	$$
		G(v,t)\ge\frac{\alpha^2-\lambda_1}{2}\,t^2>0\quad\text{for }t\ge S
	$$
	and
	$$
	G(v,t)\le-\frac{\alpha^2-\lambda_1}{2}\,t^2<0\quad \text{for }t\le-S.
	$$

	\textbf{Step 4: Nonemptiness and compactness of $K$.} Fix $v\in S(V)$. We know that the function $t\mapsto G(v,t)$ is continuous on $[-S,S]$, and
	$$
	G(v,-S)<0<G(v,S).
	$$
	Therefore, by the intermediate value theorem, there exists $t\in(-S,S)$ such that $G(v,t)=0$, i.e., $K_v$ is nonempty. Obviously, by \eqref{threshold for G}, we have
	$$
	K=\{(v,t)\in S(V)\times\R:G(v,t)=0\}\subset S(V)\times(-S,S),
	$$
	and we may also write
	\begin{equation}
		K=\{(v,t)\in S(V)\times[-S,S]:G(v,t)=0\}. \label{new description of K}
	\end{equation}
	Since $V$ is finite-dimensional, its $L^2$-unit sphere $S(V)$ is compact. Hence $S(V)\times[-S,S]$ is compact. By \eqref{new description of K} and the continuity of $G$, we immediately know that $K$ is compact.

	\textbf{Step 5: Energy balance and non-vanishing of $f_\pm$.} Fix $(v,t)\in K$. Since $G(v,t)=0$, the definition of $G$ gives
	$$
	q_{\alpha^2}^{\Omega}(f_+)=q_{\alpha^2}^{\Omega}(f_-).
	$$
	Together with \eqref{energy split}, we have
	\begin{equation}
		q_{\alpha^2}^{\Omega}(f_+)=q_{\alpha^2}^{\Omega}(f_-)=\frac12q_{\alpha^2}^{\Omega}(e)\le0.\label{energy identity used in the proof of Lem 4}
	\end{equation}
	Now we show that neither $f_+$ nor $f_-$ vanishes. Suppose that one of them were zero. Then its energy would be zero and hence
	\[
	q_{\alpha^2}^{\Omega}(f_+)=q_{\alpha^2}^{\Omega}(f_-)=0,
	\]
	where we have used \eqref{energy identity used in the proof of Lem 4}. Therefore, by \eqref{energy split},
	\[
	0=q_{\alpha^2}^{\Omega}(e)
	=q_{\alpha^2}^{\Omega}(v)-(\alpha^2-\lambda_1)t^2
	\le-(\alpha^2-\lambda_1)t^2.
	\]
	Since $\alpha^2>\lambda_1$, the only possibility is that $t=0$. Thus, we have $e=v$. If $f_+=0$, then $v\le0$ almost everywhere. If $f_-=0$, then $v\ge0$ almost everywhere. In either case, $v$ has only one sign. Since $v\perp u_1$, we obtain that
	\[
	0=|\ip{v}{u_1}|
	=\int_\Omega |v|u_1\,dx.
	\]
	Since $u_1>0$ almost everywhere, we know $v=0$ almost everywhere, thus contradicting to the fact that $\norm{v}_{L^2(\Omega)}=1$.

	\textbf{Step 6: Uniform positive lower bounds on $K$.} The maps $(v,t)\mapsto f_\pm(v,t)$ are continuous in $L^2(\Omega)$. Therefore, the two functions
	\[
	(v,t)\mapsto \norm{f_+(v,t)}_{L^2(\Omega)}^2,
	\quad
	(v,t)\mapsto \norm{f_-(v,t)}_{L^2(\Omega)}^2
	\]
	are continuous on $K$. By Step 5, we know both $\norm{f_+(v,t)}_{L^2(\Omega)}^2$ and $\norm{f_-(v,t)}_{L^2(\Omega)}^2$ are strictly positive on $K$. Since $K$ is nonempty and compact, each function attains a strictly positive minimum. Therefore, we can find a positive constant $r_*>0$ such that
	\[
	\norm{f_\pm(v,t)}_{L^2(\Omega)}^2\ge r_*\quad\text{for every }(v,t)\in K.
	\]
    Also by Step 5, we know $q_{\alpha^2}^{\Omega}(f_\pm(v,t))\le0$ on $K$. Since $\beta^2=\alpha^2+d$, we obtain that
	\[
	\begin{aligned}
		\mu_\pm(v,t)&=-q_{\beta^2}^{\Omega}(f_\pm(v,t))\\
		&=-q_{\alpha^2}^{\Omega}(f_\pm(v,t))+d\norm{f_\pm(v,t)}_{L^2(\Omega)}^2\\
		&\ge d\norm{f_\pm(v,t)}_{L^2(\Omega)}^2\ge dr_*.
	\end{aligned}
	\]
	Thus $\mu_*=dr_*>0$ gives the required uniform lower bound in \eqref{mass lower bound}.
	
	It remains to bound $\ip{u_1}{f_\pm(v,t)}$ from below. For each $(v,t)\in K$, the functions $f_\pm(v,t)$ are non-negative and non-zero, so both $f_+$ and $f_-$ are positive on a set of positive measure. Since $u_1>0$ almost everywhere, we know
	\begin{equation}\label{positive inner product}
		\ip{u_1}{f_\pm(v,t)}=\int_\Omega u_1f_\pm(v,t)\,dx>0.
	\end{equation}
	These inner products are continuous in $(v,t)$. Indeed, Cauchy-Schwarz inequality shows that
	\[
	\begin{aligned}
		\left|\ip{u_1}{f_\pm(v,t)}-\ip{u_1}{f_\pm(\widetilde v,\widetilde t)}\right|\leq		\norm{u_1}_{L^2(\Omega)}\norm{f_\pm(v,t)-f_\pm(\widetilde v,\widetilde t)}_{L^2(\Omega)},
	\end{aligned}
	\]
	and the last factor tends to zero as $(\widetilde v,\widetilde t)\to(v,t)$. The desired uniform positive lower bound of $\ip{u_1}{f_\pm(v,t)}$ then follows by \eqref{positive inner product}, the continuity of $(v,t)\mapsto \ip{u_1}{f_\pm(v,t)}$ and the compactness of $K$.
\end{proof}

We now turn to the choice of centers. For $(v,t)\in S(V)\times\R$ and $A\in\R^n$, define
\begin{equation}
		M_{\pm,i}(v,t,A)\coloneqq q_{\beta^2}^{\Omega}(F_{A,i}f_\pm,f_\pm),\quad 1\leq i\leq n,\label{moment M}\\
\end{equation}
and
\begin{equation}
	Q_{\pm,ij}(v,t,A)\coloneqq q_{\beta^2}^{\Omega}(F_{A,i}f_\pm,F_{A,j}f_\pm),\quad 1\leq i,j\leq n.\label{matrix Q}
\end{equation}
Here, $M_\pm$ is a vector in $\R^n$, and $Q_\pm$ is a real symmetric $n\times n$ matrix. Our aim is to find two centers $A_\pm$ satisfying that $M_\pm(v,t,A_\pm)=0$. Note that the matrices $Q_\pm$ record the values of the quadratic-form on linear combinations of the functions
$F_{A,i}f_\pm$.

To this end, we first express $M_\pm$ and $Q_\pm$ as integrals. For $v=\sum_{j=2}^4c_ju_j$ and $\tau\in\R$, define
\begin{equation}
	h_\tau(v,t)\coloneqq\sum_{j=2}^4c_j(\lambda_j-\tau)u_j-t(\lambda_1-\tau)u_1\in L^2(\Omega).
	\label{h tau}
\end{equation}
Since $e=v-tu_1$, for any $w\in H^1_0(\Omega)$, we have
\begin{equation}
	q_\tau^{\Omega}(e,w)=\int_\Omega h_\tau w\,dx.
	\label{qtau e w}
\end{equation}
Define
\begin{equation}
	\eta_+(v,t)\coloneqq-f_+h_{\beta^2},\quad\eta_-(v,t)\coloneqq f_-h_{\beta^2}.
	\label{eta pm}
\end{equation}
Both functions belong to $L^1(\Omega)$ by the Cauchy-Schwarz inequality. Note that $\eta_\pm$ need not to be non-negative. The subscripts used here indicate only whether they are constructed from $f_+$ or $f_-$. We use the term total signed mass for $\int_\Omega\eta_\pm\,dx$.

The next lemma shows that
$$
M_\pm(v,t,A)=-\int_\Omega F_A\eta_\pm\,dx.
$$
Thus the required center conditions $M_\pm(v,t,A_\pm)=0$ are equivalent to
\[
\int_\Omega F_{A_\pm}\eta_\pm\,dx=0.
\]
In \cite{LiTangZhang2026}, the same radial field is used but the center is chosen such that
$\int_\Omega F_Au^2\,dx=0$, see \cite[(5.2)]{LiTangZhang2026} for instance. Here, the non-negative weight $u^2$ is replaced by the signed functions $\eta_\pm$. 

\begin{lemma}\label{lemma of M and Q}
	For every $(v,t,A)\in S(V)\times\R\times\R^n$, we have
	\begin{equation}
		\int_\Omega\eta_\pm\,dx=-q_{\beta^2}^{\Omega}(f_\pm)=\mu_\pm(v,t),
		\label{total mass}
	\end{equation}
	\begin{equation}
		M_{\pm,i}(v,t,A)=-\int_\Omega F_{A,i}\eta_\pm\,dx,
		\label{M integral}
	\end{equation}
	\begin{equation}
		Q_{\pm,ij}(v,t,A)=-\int_\Omega F_{A,i}F_{A,j}\eta_\pm\,dx+\int_\Omega(\nabla F_{A,i}\cdot\nabla F_{A,j})f_\pm^2\,dx.
		\label{Q integral}
	\end{equation}
	Moreover,
	\begin{equation}
		G(v,t)=\int_\Omega |e(v,t)|h_{\alpha^2}(v,t)\,dx.
		\label{G integral}
	\end{equation}
	The maps $\eta_\pm:S(V)\times\R\rightarrow L^1(\Omega)$	are continuous. Moreover, the maps $	M_\pm:S(V)\times\R\times\R^n\rightarrow\R^n$ and $Q_\pm:S(V)\times\R\times\R^n\rightarrow\R^{n\times n}$ are jointly continuous in $(v,t,A)$.
	
	For every $(v,t)\in K$,
	$$
	\int_\Omega\eta_\pm(v,t)\,dx\geq\mu_*>0.
	$$
	Besides, there exists a constant $B>0$, independent of $(v,t)\in K$, such that
	\begin{equation}
		\norm{\eta_\pm(v,t)}_{L^1(\Omega)}\leq B,\quad (v,t)\in K.
		\label{upper bound of eta}
	\end{equation}
\end{lemma}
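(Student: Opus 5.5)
The plan is to reduce every identity to the single relation \eqref{qtau e w}, $q_\tau^\Omega(e,w)=\int_\Omega h_\tau w\,dx$, combined with the disjointness of supports from Lemma \ref{basic lemma for sobolev function}.

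\textbf{The identities.} The key observation is that for $w\in H_0^1(\Omega)$ one has $q_\tau^\Omega(f_+,f_+w')=q_\tau^\Omega(e,f_+w')$ whenever $w'=F$ with $F\in W^{1,\infty}$. Indeed, $e=f_+-f_-$ and $q_\tau^\Omega(f_-,Ff_+)=0$ by \eqref{product of Ff+ and Gf- is zero}. Similarly, $q_\tau^\Omega(f_-,Ff_-)=-q_\tau^\Omega(e,Ff_-)$.
\begin{itemize}
\item Taking $F\equiv1$ gives $q_{\beta^2}^\Omega(f_\pm)=\pm\int h_{\beta^2}f_\pm$, hence $\int\eta_\pm=-q_{\beta^2}^\Omega(f_\pm)$. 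This is \eqref{total mass}.
\item Taking $F=F_{A,i}$ gives $M_{\pm,i}=q_{\beta^2}(f_\pm,F_{A,i}f_\pm)=\pm\int h_{\beta^2}F_{A,i}f_\pm=-\int F_{A,i}\eta_\pm$. This is \eqref{M integral}.
\item For $Q$, expand $\nabla(F_if)\cdot\nabla(F_jf)$ and use the standard ground-state identity
\[
\int\nabla(F_if)\cdot\nabla(F_jf)=\int\nabla f\cdot\nabla(F_iF_jf)+\int(\nabla F_i\cdot\nabla F_j)f^2 .
\]
This follows by direct expansion: the cross terms $F_i f\nabla F_j\cdot\nabla f+F_jf\nabla F_i\cdot\nabla f$ appear on both sides. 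It holds for $f\in H_0^1$ since $F_iF_j\in W^{1,\infty}$. Then apply the previous observation with $F=F_iF_j$, which gives \eqref{Q integral}.
\item For \eqref{G integral}, $q_{\alpha^2}(f_+)-q_{\alpha^2}(f_-)=\int h_{\alpha^2}f_++\int h_{\alpha^2}f_-=\int h_{\alpha^2}|e|$.
\end{itemize}

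\textbf{Continuity.} The map $(v,t)\mapsto h_\tau(v,t)$ is continuous into $L^2$, since its coefficients $c_j$ depend linearly on $v$. The maps $f_\pm$ are continuous into $H^1_0$ by Lemma \ref{basic lemma for sobolev function}, as shown in Lemma \ref{equal energy lemma}. Cauchy--Schwarz then gives continuity of $\eta_\pm$ into $L^1$.

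For $M_\pm$, estimate $|\int F_A\eta-\int F_{A'}\eta'|\le\gamma\|\eta-\eta'\|_{L^1}+\|F_A-F_{A'}\|_\infty\|\eta'\|_{L^1}$. This is continuous by the uniform continuity of $A\mapsto F_A$ from Lemma \ref{bessel quotient basic lemma}. The same argument handles $Q_\pm$, using in addition the uniform bound on $DF_A$, its uniform continuity in $A$, and the $L^1$-continuity of $f_\pm^2$.

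\textbf{Bounds on $K$.} The lower bound $\int\eta_\pm\ge\mu_*$ follows from \eqref{total mass} and \eqref{mass lower bound}. For \eqref{upper bound of eta}, write
\[
\|\eta_\pm\|_{L^1}\le\|f_\pm\|_{L^2}\|h_{\beta^2}\|_{L^2}\le\|e\|_{L^2}\|h_{\beta^2}\|_{L^2}.
\]
Here $\|e\|_{L^2}^2=1+t^2\le1+S^2$, and $\|h_{\beta^2}\|_{L^2}^2\le\max_j(\lambda_j-\beta^2)^2(1+t^2)$. Since $|t|<S$ on $K$, both factors are bounded, and a constant $B$ is obtained.

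\textbf{Main obstacle.} The only step needing care is the product-rule identity behind \eqref{Q integral} in the $H_0^1$ setting. It requires justifying that $F_iF_jf_\pm\in H_0^1$, which holds because $F_iF_j$ is bounded and Lipschitz, and that the cross terms involving $f_\mp$ vanish, which is exactly \eqref{product of Ff+ and Gf- is zero}. Everything else is routine.
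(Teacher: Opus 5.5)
Your proposal is correct and follows essentially the same route as the paper: test \eqref{qtau e w} against $\zeta f_\pm$ with $\zeta\in W^{1,\infty}$, kill the cross terms with $f_\mp$ via Lemma \ref{basic lemma for sobolev function}, use the product rule for $Q_\pm$, and derive continuity from the $L^1$-continuity of $\eta_\pm$ and $f_\pm^2$ together with the uniform continuity of $F_A$ and $DF_A$ in $A$. The only minor difference is in \eqref{upper bound of eta}: you use the explicit bound $\norm{\eta_\pm}_{L^1}\le\norm{e}_{L^2}\norm{h_{\beta^2}}_{L^2}\le\max_{j\le4}|\lambda_j-\beta^2|\,(1+S^2)$, whereas the paper uses continuity of $\norm{\eta_\pm}_{L^1}$ and compactness of $K$; both arguments are valid.
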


\begin{proof}
	\textbf{Step 1: Verification of integral identities.} Let $\zeta\in W^{1,\infty}(\R^n)$. By Lemma \ref{basic lemma for sobolev function}, $\zeta f_\pm\in H_0^1(\Omega)$. By taking $\tau=\beta^2$ and $w=\zeta f_\pm$ in \eqref{qtau e w}, we obtain that
	\[
	q_{\beta^2}^{\Omega}(e,\zeta f_\pm)=\int_\Omega \zeta f_\pm h_{\beta^2}\,dx.
	\]
	Recall that $e=f_+-f_-$ and that
	\[
	f_+f_-=0,\quad f_+\nabla f_-=f_-\nabla f_+=0,\quad\nabla f_+\cdot\nabla f_-=0,\quad\text{a.e. in }\Omega.
	\]
	By a direct calculation, we may verify that $q_{\beta^2}^{\Omega}(f_-,\zeta f_+)=0$. Therefore, we have
	\[
	\begin{aligned}
		q_{\beta^2}^{\Omega}(e,\zeta f_+)
		=q_{\beta^2}^{\Omega}(f_+,\zeta f_+),\quad q_{\beta^2}^{\Omega}(e,\zeta f_-)		&=-q_{\beta^2}^{\Omega}(f_-,\zeta f_-).
	\end{aligned}
	\]
	Summing up the above, we get
	$$
	q_{\beta^2}^{\Omega}(f_+,\zeta f_+)=\int_\Omega\zeta f_+h_{\beta^2}\,dx=-\int_\Omega\zeta\eta_+\,dx
	$$
	and
	$$
	q_{\beta^2}^{\Omega}(f_-,\zeta f_-)=-\int_\Omega\zeta f_-h_{\beta^2}\,dx=-\int_\Omega\zeta\eta_-\,dx,
	$$
	where we have used the fact that $\eta_+=-f_+h_{\beta^2}$ and $\eta_-=f_-h_{\beta^2}$, or equivalently,
	\begin{equation}
		q_{\beta^2}^{\Omega}(f_\pm,\zeta f_\pm)=-\int_\Omega\zeta\eta_\pm\,dx.
		\label{q beta truncated}
	\end{equation}
	The previously established bounds on $F_A$ and $DF_A$ in Lemma \ref{bessel quotient basic lemma} imply that each $F_{A,i}$ and each product $F_{A,i}F_{A,j}$ belongs to $W^{1,\infty}(\R^n)$. By taking $\zeta=1$ and $\zeta=F_{A,i}$ in \eqref{q beta truncated}, we get \eqref{total mass} and \eqref{M integral}. The product rule gives
	\[
	\begin{aligned}
		q_{\beta^2}^{\Omega}(F_{A,i}f_\pm,F_{A,j}f_\pm)=q_{\beta^2}^{\Omega}
		(f_\pm,F_{A,i}F_{A,j}f_\pm)+\int_\Omega	(\nabla F_{A,i}\cdot\nabla F_{A,j})f_\pm^2\,dx.
	\end{aligned}
	\]
	Taking $\zeta=F_{A,i}F_{A,j}$ gives \eqref{Q integral}. By a totally similar argument, one has
	\[
	q_{\alpha^2}^{\Omega}(f_+)
	=\int_\Omega f_+h_{\alpha^2}\,dx,
	\quad
	q_{\alpha^2}^{\Omega}(f_-)
	=-\int_\Omega f_-h_{\alpha^2}\,dx.
	\]
	Subtracting these equalities and using $f_++f_-=|e|$ gives \eqref{G integral}.
	
	\textbf{Step 2: The continuity of $\eta_\pm$, $M_\pm$ and $Q_\pm$.} Let
	\[
	(v_k,t_k,A_k)\rightarrow(v,t,A)
	\quad\text{in }S(V)\times\R\times\R^n.
	\]
	For simplicity, we write
	\[
		f_{\pm,k}=f_\pm(v_k,t_k),\quad f_\pm=f_\pm(v,t),\quad h_k=h_{\beta^2}(v_k,t_k),\quad h=h_{\beta^2}(v,t).
	\]
	Obviously, we know
	\[
	h_k\rightarrow h\quad\text{in }L^2(\Omega),\quad f_{\pm,k}\rightarrow f_\pm
	\quad\text{in }L^2(\Omega).
	\]
	By the definitions of $\eta_\pm$ and the Cauchy-Schwarz	inequality,
	\[
	\begin{aligned}
		\norm{\eta_\pm(v_k,t_k)-\eta_\pm(v,t)}_{L^1(\Omega)}\leq\norm{f_{\pm,k}-f_\pm}_{L^2(\Omega)}
		\norm{h_k}_{L^2(\Omega)}+\norm{f_\pm}_{L^2(\Omega)}\norm{h_k-h}_{L^2(\Omega)}
		\rightarrow0.
	\end{aligned}
	\]
	Similarly,
	\[
	\norm{f_{\pm,k}^2-f_\pm^2}_{L^1(\Omega)}\leq\bigl(\norm{f_{\pm,k}}_{L^2(\Omega)}+\norm{f_\pm}_{L^2(\Omega)}\bigr)\norm{f_{\pm,k}-f_\pm}_{L^2(\Omega)}
	\rightarrow0.
	\]
	Therefore, both $\eta_\pm$ and $f_\pm^2$ depend continuously on $(v,t)$ in $L^1(\Omega)$. Since $A_k\to A$, the continuity of $F_A$ and $DF_A$ in the uniform norm (see Lemma \ref{bessel quotient basic lemma}) gives
	\[
	\norm{F_{A_k}-F_A}_\infty\rightarrow0,\quad	\norm{DF_{A_k}-DF_A}_\infty\rightarrow0.
	\]
	Consequently, each of
	\[
	F_{A_k,i},\quad	F_{A_k,i}F_{A_k,j},\quad\nabla F_{A_k,i}\cdot\nabla F_{A_k,j}
	\]
	converges uniformly to the corresponding expression with
	center $A$. Recall that the following elementary estimate holds: if $a_k\to a$ in $L^\infty(\Omega)$
	and $b_k\to b$ in $L^1(\Omega)$, then
	\[
	\begin{aligned}
		\left|\int_\Omega a_kb_k\,dx-\int_\Omega ab\,dx\right|\leq\norm{a_k-a}_{L^\infty(\Omega)}
		\norm{b_k}_{L^1(\Omega)}+\norm{a}_{L^\infty(\Omega)}\norm{b_k-b}_{L^1(\Omega)}
		\rightarrow0.
	\end{aligned}
	\]
	Applying this estimate to each integral in \eqref{M integral} and \eqref{Q integral}, we obtain that
	\[
	\begin{aligned}
		M_\pm(v_k,t_k,A_k)\rightarrow M_\pm(v,t,A)\quad\text{and}\quad 
		Q_\pm(v_k,t_k,A_k)\rightarrow Q_\pm(v,t,A).
	\end{aligned}
	\]
	This proves the joint continuity in $(v,t,A)$.
	
	\textbf{Step 3: The desired bounds on $K$.} The $L^1$ continuity of $\eta_\pm$ proved above implies that the two functions $(v,t)\mapsto\norm{\eta_\pm(v,t)}_{L^1(\Omega)}$ are continuous. Since $\eta_\pm(v,t)\in L^1(\Omega)$ for each $(v,t)$ and $K$ is compact, we can find a positive constant $B>0$ such that \eqref{upper bound of eta} holds. By Lemma \ref{equal energy lemma}, we know at every $(v,t)\in K$, there holds
	\[
	q_{\alpha^2}^{\Omega}(f_\pm)\leq0,\quad\norm{f_\pm}_{L^2(\Omega)}^2\geq r_*.
	\]
	Therefore, by \eqref{total mass} and $\beta^2=\alpha^2+d$, we have
	\[
	\begin{aligned}
		\int_\Omega\eta_\pm\,dx=-q_{\beta^2}^{\Omega}(f_\pm)=-q_{\alpha^2}^{\Omega}(f_\pm)
		+d\norm{f_\pm}_{L^2(\Omega)}^2\geq d\norm{f_\pm}_{L^2(\Omega)}^2
		\geq dr_*=\mu_*>0.
	\end{aligned}
	\]
	Combining the two bounds, we obtain that
	\[
	\mu_*\leq\int_\Omega\eta_\pm\,dx\leq\int_\Omega|\eta_\pm|\,dx=\norm{\eta_\pm}_{L^1(\Omega)}\leq B,
	\quad\forall\, (v,t)\in K.
	\]
	Although the total mass of $\eta_\pm$ is positive, we can not conclude that $\eta_\pm$ is non-negative almost everywhere in $\Omega$.
\end{proof}

The solution of the balance equation $G(v,t)=0$ has already been restricted to $S(V)\times(-S,S)$. We now try to find a uniform bound on the centers: for $(v,t)\in K$, we will show that the equations $M_\pm(v,t,A_\pm)=0$ can not hold when $|A_\pm|$ is too large. We first give an estimate for general functions $f\in H_0^1(\Omega)$.

\begin{remark}\label{general estimate on center}
	For $f\in H_0^1(\Omega)$, define
	$$
	M_f(A)\coloneqq\bigl(q_{\beta^2}^{\Omega}(F_{A,i}f,f)\bigr)_{i=1}^n.
	$$
	By a direct calculation, we have
	\begin{equation}
		M_f(A)
		=\int_\Omega F_A\bigl(|\nabla f|^2-\beta^2f^2\bigr)\,dx
		+\int_\Omega f\,DF_A\nabla f\,dx.
		\label{MfA direct expansion}
	\end{equation}
	Choose $D>0$ such that $\Omega\subset B_D(0)$. Fix a center $A$ with $r=|A|>D+1$, and put $\omega=A/r$. For each $x\in\Omega$, write $\rho=|x-A|$. Then we know $\rho\geq r-|x|>r-D>1$ and hence
	\[
	F_A(x)=\gamma\frac{x-A}{\rho}.
	\]
	Firstly, since
	\[
	\frac{x-A}{\rho}+\frac{A}{r}
	=\frac{x}{\rho}
	+\left(\frac1r-\frac1\rho\right)A,
	\]
	and $|\rho-r|\leq|x|$, we obtain that
	\[
	\begin{aligned}
		|F_A(x)+\gamma\omega|
		\leq\gamma\frac{|x|}{\rho}+\gamma r\left|\frac1r-\frac1\rho\right|=\gamma\frac{|x|+|\rho-r|}{\rho}\leq\frac{2\gamma|x|}{\rho}
		\leq\frac{2\gamma D}{r-D}.
	\end{aligned}
	\]
	Now, by \eqref{gradient of FA}, we know
	\[
	DF_A(x)=\frac{\gamma}{\rho}\left(I-\theta\theta^{\mathsf T}\right),\quad\theta=\frac{x-A}{\rho}.
	\]
	Since $|\theta|=1$, for every $c\in\R^n$, we know
	\[
	\left|\left(I-\theta\theta^{\mathsf T}\right)c\right|^2=|c|^2-(c\cdot\theta)^2\leq|c|^2.
	\]
	Therefore, by the definition of the Euclidean operator norm,
	\[
	\norm{DF_A(x)}_{\mathrm{op}}
	=\sup_{|c|=1}|DF_A(x)c|
	\leq\frac{\gamma}{\rho}
	\leq\frac{\gamma}{r-D}.
	\]
	Summing up the above, we conclude that
	\[
	\sup_{x\in\Omega}|F_A(x)+\gamma\omega|
	\leq\frac{2\gamma D}{r-D},
	\qquad
	\sup_{x\in\Omega}\norm{DF_A(x)}_{\mathrm{op}}
	\leq\frac{\gamma}{r-D}.
	\]
	
	Now, to identify the main term in $M_f(A)$, rewrite \eqref{MfA direct expansion} as
	\[
	\begin{aligned}
		M_f(A)+\gamma\omega q_{\beta^2}^{\Omega}(f)
		&=\int_\Omega(F_A+\gamma\omega)
		\bigl(|\nabla f|^2-\beta^2f^2\bigr)\,dx+\int_\Omega f\,DF_A\nabla f\,dx.
	\end{aligned}
	\]
	The preceding estimates and Cauchy-Schwarz inequality yield that
	\begin{equation}
		\begin{aligned}
			\left|M_f(A)+\gamma\omega q_{\beta^2}^{\Omega}(f)\right|&\leq\frac{2\gamma D}{r-D}\left(\norm{\nabla f}_{L^2(\Omega)}^2+\beta^2\norm f_{L^2(\Omega)}^2\right)\\
			&\quad+\frac{\gamma}{r-D}\norm f_{L^2(\Omega)}\norm{\nabla f}_{L^2(\Omega)}.
		\end{aligned}
		\label{estimate for Mf + qbeta}
	\end{equation}
	For $r\geq2D+2$, the order of the right-hand side is at most $Cr^{-1}\norm f_{H^1(\Omega)}^2$, where $C$ depends only on $D$, $\beta^2$ and $\gamma$. Thus, 
	\[
	M_f(A)\cdot\omega\geq-\gamma q_{\beta^2}^{\Omega}(f)-\frac C r\norm f_{H^1(\Omega)}^2.
	\]
	If $q_{\beta^2}^{\Omega}(f)<0$, then the first term on the right-hand side is positive, while the second term tends to zero as $r\to\infty$. Thus $M_f(A)\ne0$ for all sufficiently
	large $|A|$. Moreover, the radius can be chosen uniformly for a family of functions satisfying that
	\[
	\norm f_{H^1(\Omega)}^2\leq C_0,
	\qquad
	-q_{\beta^2}^{\Omega}(f)\geq C_1>0,
	\]
	where $C_0>0$ and $C_1>0$ are fixed. Indeed, choose
	\[
	R\geq
	\max\left\{2D+2,\frac{2CC_0}{\gamma C_1}\right\},
	\]
	then
	\[
	M_f(A)\cdot\frac A{|A|}	\geq\gamma C_1-\frac{CC_0}{|A|}	\geq\frac{\gamma C_1}{2}>0,\quad |A|\geq R.
	\]
\end{remark}

We now consider the following equations simultaneously
\[
G(v,t)=0,\quad M_+(v,t,A_+)=0,\quad M_-(v,t,A_-)=0.
\]
The next lemma shows that all the solutions lie in one bounded set and that, for each fixed $v\in S(V)$, the Brouwer degree with respect to $(t,A_+,A_-)$ is $1$. In particular, at least one solution exists for each $v$.

\begin{lemma}[Solvability of the balance and center equations]\label{balance center solve degree}
	There exists a positive constant $R>0$ such that for every $(v,t)\in K$ and every $A\in\R^n$ with $|A|\geq R$,
	\begin{equation}
		M_\pm(v,t,A)\cdot\frac{A}{|A|}
		\geq\frac{\gamma\mu_*}{2}>0.
		\label{M pm far}
	\end{equation}
	Set
	\begin{equation}
		Y=(-S,S)\times B_R(0)\times B_R(0)\subset\R^{1+2n}
	\end{equation}
	and
	\begin{equation}
			\mathcal F(v,t,A_+,A_-)=\bigl(G(v,t),M_+(v,t,A_+),M_-(v,t,A_-)\bigr).
		\label{F joint map}
	\end{equation}
	The map $\mathcal F$ is continuous on $S(V)\times\R^{1+2n}$ and has no zeros on $S(V)\times\partial Y$. Besides, for every fixed $v\in S(V)$,
	\begin{equation}
		\deg\bigl(\mathcal F(v,\cdot),Y,0\bigr)=1.
		\label{degree formula}
	\end{equation}
	Moreover, the set of solutions
	\begin{equation}
		Z=\{(v,t,A_+,A_-)\in S(V)\times\R^{1+2n}:
		\mathcal F(v,t,A_+,A_-)=0\}
		\label{set of solutions}
	\end{equation}
	is a compact subset of $S(V)\times Y$, and for every $v\in S(V)$, there is at least one triple $(t,A_+,A_-)\in Y$ such that $(v,t,A_+,A_-)\in Z$.
\end{lemma}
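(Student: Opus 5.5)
The proof has three parts: a uniform far-field estimate from Remark \ref{general estimate on center}, a boundary check on $\partial Y$, and a homotopy of $\mathcal F(v,\cdot)$ on $\bar Y$ to a map of degree one.

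\textbf{Far-field estimate \eqref{M pm far}.} I will apply Remark \ref{general estimate on center} to the family $\{f_\pm(v,t):(v,t)\in K\}$. Since $M_\pm(v,t,A)=M_{f_\pm}(A)$, it suffices to produce the two constants of the Remark. For $C_1$, Lemma \ref{equal energy lemma} gives $-q_{\beta^2}^{\Omega}(f_\pm)=\mu_\pm\ge\mu_*$. For $C_0$, note that $\|f_\pm\|_{H^1}\le\|e\|_{H^1}\le\sqrt{1+\alpha^2}+S\|u_1\|_{H^1}$, by Lemma \ref{basic lemma for sobolev function} and $|t|<S$ on $K$. The Remark then yields $R$ with $M_\pm\cdot A/|A|\ge\gamma C_1/2=\gamma\mu_*/2$ for $|A|\ge R$.

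\textbf{Continuity and no zeros on the boundary.} Continuity of $\mathcal F$ follows from the joint continuity of $G$ (Lemma \ref{equal energy lemma}) and of $M_\pm$ (Lemma \ref{lemma of M and Q}). Suppose $\mathcal F=0$ at a point of $S(V)\times\partial Y$. Then $G(v,t)=0$ forces $(v,t)\in K$, so $|t|<S$ and $t\neq\pm S$. Consequently $|A_+|=R$ or $|A_-|=R$, and in either case \eqref{M pm far} contradicts the vanishing of $M_\pm$.

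\textbf{Degree computation.} Fix $v$. The $t$-equation does not involve $A_\pm$, but $M_\pm$ depends on $t$, so the map is not literally a product. I will use the homotopy
\[
H_s(t,A_+,A_-)=\bigl(G(v,t),\,(1-s)M_+(v,t,A_+)+sA_+,\,(1-s)M_-(v,t,A_-)+sA_-\bigr),\quad s\in[0,1].
\]
It has no zeros on $\partial Y$ for the following reasons. At $t=\pm S$ the first component is nonzero by \eqref{threshold for G}. If $|t|<S$ and $G=0$, then $(v,t)\in K$. On $|A_\pm|=R$ the inner product of the second (or third) component with $A_\pm/|A_\pm|$ is at least $(1-s)\gamma\mu_*/2+sR>0$, so that component does not vanish. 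Homotopy invariance gives $\deg(\mathcal F(v,\cdot),Y,0)=\deg(H_1,Y,0)$.

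The map $H_1=(G(v,t),A_+,A_-)$ is a genuine product. Its degree is $\deg(G(v,\cdot),(-S,S),0)\cdot\deg(\Id,B_R,0)^2$. Each identity factor equals $1$. The one-dimensional factor equals $1$ because $G(v,-S)<0<G(v,S)$: the homotopy $(1-s)G(v,t)+st$ has no zeros at $t=\pm S$, reducing it to the degree of $t\mapsto t$.

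\textbf{Compactness and existence.} $Z$ is closed, since $\mathcal F$ is continuous. The projection of $Z$ onto $(v,t)$ lies in the compact set $K$, and the $A_\pm$-coordinates lie in $B_R$ by \eqref{M pm far}, so $Z$ is bounded and hence compact. By the boundary check above, $Z$ lies in $S(V)\times Y$. Nonzero degree gives a solution for each $v$.

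The only step needing care is the homotopy argument, specifically ensuring that the boundary sign conditions hold uniformly along the deformation. The estimate above handles this.
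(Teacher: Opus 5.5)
Your proposal is correct and follows essentially the same route as the paper. It uses the far-field estimate from Remark~\ref{general estimate on center}, the same boundary check, the same homotopy $(1-s)M_\pm+sA_\pm$ followed by reducing the $t$-component to $t\mapsto t$, and the same compactness argument. The only minor differences are that you bound $\|f_\pm\|_{H^1}$ explicitly through $\|e\|_{H^1}$ rather than via compactness of $K$, and you invoke the product formula for the degree instead of a second homotopy on all of $Y$.
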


\begin{proof}
\textbf{Step 1: A uniform bound on the centers.} Fix $(v,t)\in S(V)\times\R$. Since $f_\pm(v,t)\in H_0^1(\Omega)$, we may take $f=f_\pm(v,t)$ in the definition of $M_f$ from Remark \ref{general estimate on center}. For every $A\in\R^n$ and $i=1,\ldots,n$, it is straightforward to see that
\[
\begin{aligned}
	\bigl(M_{f_\pm(v,t)}(A)\bigr)_i=q_{\beta^2}^{\Omega}\bigl(F_{A,i}f_\pm(v,t),f_\pm(v,t)\bigr)=M_{\pm,i}(v,t,A).
\end{aligned}
\]
Hence
\[
M_\pm(v,t,A)=M_{f_\pm(v,t)}(A),
\quad
\forall\,(v,t,A)\in S(V)\times\R\times\R^n.
\]
Note that the maps $(v,t)\mapsto f_\pm(v,t)$ are continuous. Since $K$ is compact, there is a positive constant $C_0>0$ such that
	\[
	\norm{f_\pm(v,t)}_{H^1(\Omega)}^2\leq C_0,\quad \forall\,(v,t)\in K.
	\]
	By Lemma \ref{equal energy lemma}, we also have
	\[
	-q_{\beta^2}^{\Omega}(f_\pm(v,t))\geq\mu_*>0,\forall\,(v,t)\in K.
	\]
	Applying the estimate in Remark \ref{general estimate on center} with $C_1=\mu_*$ gives a radius
	$R>0$, independent of $(v,t)\in K$ and of the choice of the sign, such that
	\[
	M_\pm(v,t,A)\cdot\frac{A}{|A|}\geq\frac{\gamma\mu_*}{2}>0,\quad \forall |A|\geq R.
	\]

	\textbf{Step 2: Continuity and absence of boundary zeros.} The continuity of $\mathcal{F}$ is a direct corollary of Lemma \ref{basic lemma for sobolev function}, \eqref{continuity of e} and Lemma \ref{lemma of M and Q}. For $(v,t,A_+,A_-)\in S(V)\times\partial Y$, if $t=\pm S$, then $G(v,t)\ne0$ by	\eqref{threshold for G}, so $\mathcal F(v,t,A_+,A_-)\ne0$. If $|t|<S$, then at least one center has norm $R$. When $G(v,t)\ne0$, we know $\mathcal{F}\neq 0$. When $G(v,t)=0$, i.e., $(v,t)\in K$, \eqref{M pm far} shows that	$M_+(v,t,A_+)\ne0$ if $|A_+|=R$ and $M_-(v,t,A_-)\ne0$ if $|A_-|=R$. Therefore, $\mathcal F$ has no zeros on $S(V)\times\partial Y$.
	
	\textbf{Step 3: Computation of the degree.} Fix $v\in S(V)$. The map $\mathcal F(v,\cdot)$ is continuous on $\overline Y$ and has no zeros on $\partial Y$, so its Brouwer degree at zero is well defined. For basic knowledge of the Brouwer degree, we refer the reader to \cite{Deimling1985,Kesavan2022}. For $s\in[0,1]$, consider the continuous homotopy on $\overline Y$ given by
	\[
	(t,A_+,A_-)\mapsto
	\begin{pmatrix}
		G(v,t)\\
		(1-s)M_+(v,t,A_+)+sA_+\\
		(1-s)M_-(v,t,A_-)+sA_-
	\end{pmatrix}.
	\]
	Firstly, we prove that it has no zeros on $\partial Y$ for any $s\in[0,1]$. At $t=\pm S$, the first component is non-zero. If the first component $G(v,t)$ vanishes, then $(v,t)\in K$ and $|t|<S$, and hence the corresponding point on $\partial Y$ must satisfy $|A_+|=R$ or $|A_-|=R$. For $\sigma\in\{+,-\}$ with $|A_\sigma|=R$, \eqref{M pm far} gives
	\[
	\begin{aligned}
		&\bigl((1-s)M_\sigma(v,t,A_\sigma)+sA_\sigma\bigr)\cdot\frac{A_\sigma}{R}\geq(1-s)\frac{\gamma\mu_*}{2}+sR>0.
	\end{aligned}
	\]
	Therefore, by the homotopy invariance of degree, we have
	$$
	\deg\bigl(\mathcal F(v,\cdot),Y,0\bigr)=\deg\bigl((G(v,t),A_+,A_-),Y,0\bigr).
	$$
	Next, for $s\in[0,1]$, consider the following another continuous homotopy on $\overline Y$ given by
	\[
	(t,A_+,A_-)\mapsto\bigl((1-s)G(v,t)+st,A_+,A_-\bigr).
	\]
	At $t=S$, the first component is strictly positive because $G(v,S)>0$ and $S>0$. At $t=-S$, it is strictly negative because $G(v,-S)<0$ and $-S<0$. If $|A_+|=R$ or $|A_-|=R$, the corresponding center component is non-zero. Therefore, for any $s\in[0,1]$ the homotopy given above has no zeros on $\partial Y$. When $s=1$, it is the identity map on $\overline Y$. Since $0\in Y$, the homotopy invariance of degree shows that
	\[
	\deg\bigl(\mathcal F(v,\cdot),Y,0\bigr)=\deg(\mathrm{Id},Y,0)=1.
	\]
	
	\textbf{Step 4: Compactness of $Z$ and existence of solutions for each $v$.} Let $(v,t,A_+,A_-)\in Z$. Then $G(v,t)=0$, so that $(v,t)\in K$ and $|t|<S$. On the other hand, we know $M_\pm(v,t,A_\pm)=0$. By \eqref{M pm far}, we have $|A_+|<R$ and $|A_-|<R$, which proves that $Z\subset S(V)\times Y$. Consequently, $Z$ can be viewed as the zero set of the restriction of $\mathcal F$ to
	\[
	S(V)\times[-S,S]\times
	\overline{B_R(0)}\times\overline{B_R(0)},
	\]
	which is a compact set. By the continuity of $\mathcal{F}$, we know $Z$ is compact. Finally, for each fixed $v\in S(V)$, by \eqref{degree formula}, we know $\mathcal F(v,\cdot)$ has a zero in $Y$. The proof is complete.
\end{proof}

Notice that we do not select a specific triple $(t,A_+,A_-)$ for each $v$, so neither uniqueness nor a continuous choice of these parameters is required.

We now distinguish what holds on $Z$ from what can be concluded for nearby parameters.

For $\zeta=(v,t,A_+,A_-)\in S(V)\times Y$, write
\[
	e=e(v,t),\quad f_\pm=f_\pm(v,t),\quad M_\pm=M_\pm(v,t,A_\pm),\quad Q_\pm=Q_\pm(v,t,A_\pm)
\]
for simplicity. If $\zeta\in Z$, then $G(v,t)=0$ and $M_+=M_-=0$. Then by the definition of $M_\pm$, we know
\[
q_{\beta^2}^{\Omega}(F_{A_\pm,i}f_\pm,f_\pm)=0,\quad i=1,\ldots,n.
\]
The equation $G(v,t)=0$ gives $(v,t)\in K$, so $q_{\alpha^2}^{\Omega}(f_\pm)\leq0$. By Proposition \ref{energy identity prop}, we obtain that
\begin{equation}
	\tr Q_\pm=\sum_{i=1}^nq_{\beta^2}^{\Omega}(F_{A_\pm,i}f_\pm)\leq\gamma^2q_{\alpha^2}^{\Omega}(f_\pm)\leq0.
	\label{trace is negative in Z}
\end{equation}
We only use the fact that $(v,t)\in K$ to establish the trace estimate \eqref{trace is negative in Z}.

For general $\zeta\in S(V)\times Y$, the balance and center equations no longer hold. For $\sigma\in\{+,-\}$, $c\in\R^n$, and $\rho\in\R$, it is easy to verify that
\[
	q_{\beta^2}^{\Omega}\left(\sum_{i=1}^n c_iF_{A_\sigma,i}f_\sigma+\rho f_\sigma\right)=c^{\mathsf T}Q_\sigma c+2\rho\,c\cdot M_\sigma+\rho^2q_{\beta^2}^{\Omega}(f_\sigma).
\]
The mixed term $2\rho\,c\cdot M_\sigma$ vanishes on $Z$. For general $\zeta\in S(V)\times Y$, however, it need not vanish and hence can not be dropped. Similarly, the definition of $G$ and \eqref{energy split} give
\[
	q_{\alpha^2}^{\Omega}(f_+)=\frac12\bigl(q_{\alpha^2}^{\Omega}(e)+G(v,t)\bigr),\quad	q_{\alpha^2}^{\Omega}(f_-)=\frac12\bigl(q_{\alpha^2}^{\Omega}(e)-G(v,t)\bigr).
\]
Since $q_{\alpha^2}^{\Omega}(e)\leq0$, we know $q_{\alpha^2}^\Omega(f_\pm)\leq |G(v,t)|/2$. Therefore, by Proposition \ref{energy identity prop}, we know
\[
\tr Q_\pm=\sum_{i=1}^nq_{\beta^2}^{\Omega}(F_{A_\pm,i}f_\pm)\leq\gamma^2q_{\alpha^2}^{\Omega}(f_\pm)\leq\frac{\gamma^2}{2}|G(v,t)|.
\]
When $G(v,t)=0$, this inequality is precisely \eqref{trace is negative in Z}. When $G(v,t)\ne0$, the right-hand side is positive, and hence the estimate above does not give $\tr Q_\pm\leq0$.

\section{A degree argument and the proof of Theorem \ref{main result}}\label{section5}

Recall that, after a domain dilation, we have normalized $\lambda_4(\Omega)=\alpha^2$. We have continued to assume, for a contradiction, that $\lambda_5(\Omega)>\beta^2$. In this section, we show that the two symmetric matrices $Q_+$ and $Q_-$ defined below each have exactly one negative eigenvalue in a neighborhood of the solution set of $Z$. In this neighborhood, we construct a continuous family of linear maps that send every non-zero vector in the negative eigenspace of $Q_+$ to a non-zero vector perpendicular to the corresponding $v\in S(V)$. We then show that these properties are in  contradiction with the degree calculation in Lemma \ref{balance center solve degree}.

By Lemma \ref{balance center solve degree}, the map defined in \eqref{F joint map},
\[
\mathcal F=(G,M_+,M_-):
S(V)\times\R^{1+2n}\longrightarrow\R^{1+2n},\quad Y=(-S,S)\times B_R(0)\times B_R(0),
\]
is continuous. The zero set $Z=\mathcal F^{-1}(0)$ is compact and contained in $S(V)\times Y$. For fixed $v\in S(V)$, the constraint map $\mathcal F(v,\cdot)$ has no zero on $\partial Y$ and has degree $1$ in $Y$ at zero. For $\zeta=(v,t,A_+,A_-)\in S(V)\times Y$, recall from \eqref{definition of e original} that
\[
e=e(v,t)=v-tu_1,\quad f_+=e^+,\quad f_-=e^-.
\]
Define
\begin{equation}
	\psi_i^+\coloneqq F_{A_+,i}f_+,\quad\psi_i^-\coloneqq F_{A_-,i}f_-,\quad i=1,\cdots, n,
	\label{trial function psi}
\end{equation}
and the real symmetric matrices
\begin{equation}
	Q_\pm(\zeta)=\bigl(q_{\beta^2}^{\Omega}(\psi_i^\pm,\psi_j^\pm)\bigr)_{i,j=1}^n.
	\label{Qpm sec 5}
\end{equation}
These definitions are valid throughout $S(V)\times Y$, and by Lemma \ref{lemma of M and Q}, we know that $Q_\pm$ is continuous in all parameters. The equations defining $Z$ will firstly be used to determine the eigenvalues of $Q_\pm$ on $Z$.

\begin{proposition}\label{neighborhood of Z}
	There exists an open neighborhood $\mathcal O$ of $Z$ in $S(V)\times Y$ such that each of $Q_+(\zeta)$ and $Q_-(\zeta)$	has exactly one negative eigenvalue and $n-1$ positive eigenvalues for every $\zeta\in\mathcal O$.
	
	Let $\Pi(\zeta)$ be the orthogonal projection onto the negative eigenspace of $Q_+(\zeta)$. Then $\Pi$ is continuous in $\mathcal O$ and has rank one. Furthermore, there exists a continuous map $T:\mathcal O\to\R^{3\times n}$ such that
	\begin{equation}
		v\cdot T(\zeta)c=0,\quad  \forall\, c\in\R^n,	\quad T(\zeta)c\ne0,\quad	\forall\, c\in\operatorname{range}\Pi(\zeta)\setminus\{0\},
		\label{orthogonal condition Pi}
	\end{equation}
	for every $\zeta=(v,t,A_+,A_-)\in\mathcal O$.
\end{proposition}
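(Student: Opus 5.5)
The plan is to prove the spectral statement on $Z$ first, then extend it to a neighborhood by continuity and compactness, and finally build $T$ explicitly from trial functions orthogonal to $u_1$ and $v$.

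\textbf{Inertia of $Q_\pm$ on $Z$.} The key input is Lemma \ref{q-energy definite subspace} with $\tau=\beta^2$. Since $\lambda_4=\alpha^2<\beta^2<\lambda_5$, every subspace of $H_0^1(\Omega)$ on which $q_{\beta^2}^{\Omega}$ is non-positive has dimension at most $4$. Fix $\zeta\in Z$. By \eqref{trace is negative in Z}, $\tr Q_\pm\le0$, so each $Q_\pm$ has at least one non-positive eigenvalue. Suppose, towards a contradiction, that $Q_+$ has a two-dimensional non-positive eigenspace $P_+$, and let $c^-$ be a non-positive eigenvector of $Q_-$. Consider
\[
W=\Bigl\{\sum_i c_i\psi_i^++\rho f_+ + \rho'f_- + s\sum_i c^-_i\psi_i^- : c\in P_+,\ \rho,\rho',s\in\R\Bigr\}.
\]
By \eqref{product of Ff+ and Gf- is zero}, the $+$ and $-$ pieces are $q_{\beta^2}$-orthogonal. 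Since $M_\pm=0$ on $Z$, the displayed expansion before Section \ref{section5} gives
\[
q_{\beta^2}^{\Omega}=c^{\mathsf T}Q_+c-\rho^2\mu_++s^2(c^-)^{\mathsf T}Q_-c^--\rho'^2\mu_-\le0 .
\]
It remains to check that the parametrisation of $W$ is injective, so that $\dim W=5$, contradicting the bound of $4$. If $(c\cdot F_{A_+}+\rho)f_+=0$ a.e. with $c\neq0$, then $c\cdot F_{A_+}$ would be constant on a set of positive measure. But $c\cdot F_{A}(x)=g(|x-A|)\,c\cdot\theta$, and its level sets are null: inside $B_1(A)$ this uses $g'>0$, and outside it they are cones $\{c\cdot\theta=\mathrm{const}\}$. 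The case $c=0$ forces $\rho=0$ because $f_+\ne0$.

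Hence each $Q_\pm$ has exactly one non-positive eigenvalue. That eigenvalue is in fact negative: otherwise $\tr Q_\pm$ would equal a sum of $n-1\ge1$ positive eigenvalues, contradicting $\tr Q_\pm\le0$.

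\textbf{Neighborhood and $\Pi$.} Let $\mathcal O_0$ be the set of $\zeta\in S(V)\times Y$ at which $Q_+$ and $Q_-$ are both nonsingular with exactly one negative eigenvalue. Because $Q_\pm$ is continuous and eigenvalues depend continuously on the matrix, $\mathcal O_0$ is open, and the previous step shows it contains $Z$. On $\mathcal O_0$ the negative eigenvalue of $Q_+$ is separated from the rest of the spectrum. Therefore $\Pi(\zeta)$ can be written as a Riesz projection $\frac{1}{2\pi i}\oint(z-Q_+(\zeta))^{-1}\,dz$ around a locally fixed contour, so $\Pi$ is continuous and has rank one.

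\textbf{Construction of $T$.} For $c\in\R^n$, set $\varphi(c)=\sum_ic_i\psi_i^++a f_++b f_-$, with $(a,b)$ chosen linearly in $c$ so that $\ip{\varphi}{u_1}=\ip{\varphi}{v}=0$. The coefficient matrix of this linear system has determinant
\[
\ip{f_+}{v}\ip{f_-}{u_1}-\ip{f_-}{v}\ip{f_+}{u_1}.
\]
Writing $v=e+tu_1$ and using $\ip{f_\pm}{e}=\pm\norm{f_\pm}^2$, the determinant equals $\norm{f_+}^2\ip{f_-}{u_1}+\norm{f_-}^2\ip{f_+}{u_1}$. This is positive on $K$ by Lemma \ref{equal energy lemma}, so it stays positive on a neighborhood of $Z$ by continuity. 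There $a,b$ are continuous in $\zeta$, given by Cramer's rule.

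Define $T(\zeta)c\coloneqq(\ip{\varphi(c)}{u_j})_{j=2}^4$, identifying $V\cong\R^3$ via the orthonormal basis $u_2,u_3,u_4$. Then $v\cdot T(\zeta)c=\ip{\varphi}{v}=0$ for all $c$. For non-vanishing, suppose $T(\zeta)c=0$. Then $\varphi\perp u_1,\dots,u_4$, so \eqref{main:positivecomplement} gives $q_{\beta^2}^{\Omega}(\varphi)\ge(\lambda_5-\beta^2)\norm{\varphi}^2\ge0$. It therefore suffices to show $q_{\beta^2}^{\Omega}(\varphi(c))<0$ for every unit $c\in\operatorname{range}\Pi(\zeta)$.

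On $Z$ this holds, since there
\[
q_{\beta^2}^{\Omega}(\varphi)=c^{\mathsf T}Q_+c-a^2\mu_+-b^2\mu_-<0
\]
(the term $c^{\mathsf T}Q_+c$ is negative and $M_+=0$). Away from $Z$ the extra term $2a\,c\cdot M_+$ appears. The map $(\zeta,c)\mapsto q_{\beta^2}^{\Omega}(\varphi(c))$ is continuous, $Z$ is compact, and the unit vectors in $\operatorname{range}\Pi$ form a compact family varying continuously with $\zeta$. So strict negativity persists on a neighborhood of $Z$, and intersecting all the neighborhoods found above gives $\mathcal O$.

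I expect the main obstacle to be the dimension count on $Z$, namely the injectivity of the trial family, which relies on the null-level-set property of $c\cdot F_A$. The uniformity when passing from $Z$ to $\mathcal O$ is the secondary point, and it is handled by compactness of $Z$.
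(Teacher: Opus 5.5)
Your proposal is correct and follows essentially the paper's proof. The trace bound combined with the four-dimensional cap on $q_{\beta^2}^{\Omega}$-nonpositive subspaces gives the inertia of $Q_\pm$ on $Z$; continuity extends it to a neighborhood; and the corrected trial function $\sum_i c_i\psi_i^+ + af_+ + bf_-$, made orthogonal to $u_1$ and $v$ and then projected onto $V$, yields $T$. The deviations are minor. You prove linear independence via the null level sets of $c\cdot F_A$ (using $g'>0$), whereas the paper uses the hyperplane argument together with negative definiteness of $q_{\beta^2}^{\Omega}$ on $\spanop\{f_+,f_-\}$. You also obtain continuity of $\Pi$ from a Riesz projection rather than the paper's subsequence argument.
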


\begin{proof}
	\textbf{Step 1: Determining the eigenvalues of $Q_+$ and $Q_-$ on $Z$.}	Let $$E_4=\spanop\{u_1,u_2,u_3,u_4\}$$ and let $P_4$ be the
	$L^2$-orthogonal projection onto $E_4$. Since $\lambda_4(\Omega)=\alpha^2<\beta^2<\lambda_5(\Omega)$, we obtain by Lemma \ref{q-energy definite subspace} that
	\begin{equation}
		q_{\beta^2}^{\Omega}(h)
		\ge(\lambda_5-\beta^2)\norm h_{L^2(\Omega)}^2>0
		\quad\text{for }0\ne h\in H_0^1(\Omega)\cap E_4^\perp.
		\label{energy estimate in ortho complement}
	\end{equation}
	Consequently, any subspace on which $q_{\beta^2}^{\Omega}$ is non-positive has dimension at most four. 
	
	Fix $\zeta\in Z$. By the definition of $\mathcal{F}$, we know $G(v,t)=0$, so $(v,t)\in K$. Lemma \ref{equal energy lemma} implies that both $f_+$
	and $f_-$ are non-zero. Moreover, $q_{\alpha^2}^{\Omega}(f_+)=q_{\alpha^2}^{\Omega}(f_-)\leq0$. Since $d=\beta^2-\alpha^2>0$, we have
	\begin{equation}
		q_{\beta^2}^{\Omega}(f_\pm)
		=q_{\alpha^2}^{\Omega}(f_\pm)
		-d\norm{f_\pm}_{L^2(\Omega)}^2<0.
		\label{top:baseenergy}
	\end{equation}
	Since $f_+f_-=0$ almost everywhere,	we know $f_+$ and $f_-$ are linearly independent. Moreover, by Lemma~\ref{basic lemma for sobolev function}, we know
	$q_{\beta^2}^{\Omega}(f_+,f_-)=0$. Therefore, it is easy to deduce that
	\[
	N=\spanop\{f_+,f_-\}
	\]
	has dimension two and $q_{\beta^2}^{\Omega}$ is negative definite on $N$. On the other hand, since $\zeta\in Z$, we have $M_\pm(v,t,A_\pm)=0$. By \eqref{moment M}, we have
	\begin{equation}
		q_{\beta^2}^{\Omega}(\psi_i^+,f_+)=0,\quad	q_{\beta^2}^{\Omega}(\psi_i^-,f_-)=0,\quad i=1,\cdots,n.
		\label{zhengjiao 1}
	\end{equation}
	By Lemma \ref{basic lemma for sobolev function}, we know
	\begin{equation}
		q_{\beta^2}^{\Omega}(\psi_i^+,f_-)=q_{\beta^2}^{\Omega}(f_+,\psi_j^-)=q_{\beta^2}^{\Omega}(\psi_i^+,\psi_j^-)=0,\quad 1\le i,j\le n.\label{zhengjiao 2}
	\end{equation}
	Set
	\[
	W_\pm=\spanop\{\psi_1^\pm,\ldots,\psi_n^\pm\},
	\]
	then we know by \eqref{zhengjiao 1} and \eqref{zhengjiao 2} that $N$, $W_+$, and $W_-$ are pairwise $q_{\beta^2}^{\Omega}$-orthogonal. Besides, each of $W_+$ and $W_-$ has dimension $n$. To see this, take $0\ne c\in\R^n$. Away from $x=A_\pm$, we have
	\[
	\sum_{i=1}^n c_i\psi_i^\pm(x)=\frac{g(|x-A_\pm|)}{|x-A_\pm|}\bigl(c\cdot(x-A_\pm)\bigr)f_\pm(x).
	\]
	The radial factor $g(|x-A_\pm|)/|x-A_\pm|$ is strictly positive and the linear factor $c\cdot(x-A_\pm)$ vanishes only on a hyperplane, which has Lebesgue measure zero. However, we know $f_\pm$ is non-zero on a set of positive measure. These facts imply that $\dim W_\pm=n$. Obviously, by the definition of $\psi_i^{\pm}$, we have $W_+\cap W_-=\{0\}$. Also, if $h\in N\cap(W_++W_-)$, it is easy to prove that $q_{\beta^2}^{\Omega}(h)=0$, and the negative definiteness of $q_{\beta^2}^\Omega$ on $N$ implies $h=0$. The above analysis actually shows that
	\begin{equation}
		N\mathbin{\oplus}W_+\mathbin{\oplus}W_-\quad \text{is a direct sum of pairwise $q_{\beta^2}^\Omega$-orthogonal spaces.}
		\label{direct sum N W+ W-}
	\end{equation}
	Applying \eqref{energy identity omega} separately to $f_+$ and $f_-$ gives
	\begin{equation}
		\tr Q_\pm
		=\sum_{i=1}^n q_{\beta^2}^{\Omega}(\psi_i^\pm)
		\le\gamma^2q_{\alpha^2}^{\Omega}(f_\pm)\le0.
		\label{trace bound sec 5}
	\end{equation}
	Let $r_\pm$ be the number of non-positive eigenvalues of $Q_\pm$. The trace bound \eqref{trace bound sec 5} shows that $r_\pm\ge1$. Let $C_\pm\subset\R^n$ be the subspaces spanned by the eigenvectors corresponding to non-positive eigenvalues. The maps
	\[
	C_\pm\rightarrow W_\pm,\quad
	c\mapsto\sum_{i=1}^n c_i\psi_i^\pm
	\]
	are injective, and both the images have dimension $r_\pm$. Note that
	\[
	q_{\beta^2}^{\Omega}\left(\sum_{i=1}^n c_i\psi_i^\pm\right)
	=c^{\mathsf T}Q_\pm c\le0,\quad\forall\,c\in C_\pm.
	\]
	Hence, $N\oplus W_+\oplus W_-$ contains a linear subspace of dimension $2+r_++r_-$ on which $q_{\beta^2}^{\Omega}$ is nonpositive. Therefore, we have
	\[
	2+r_++r_-\le4.
	\]
	It then follows that $r_+=r_-=1$.
	
	For a real symmetric matrix $Q$, let $\theta_1(Q)\le\cdots\le\theta_n(Q)$ denote its eigenvalues. We have shown that
	$$\theta_1(Q_\pm)\le0<\theta_2(Q_\pm)\le\cdots\le\theta_n(Q_\pm).$$
	Since $n\ge2$, and $\tr Q_\pm\leq0$, we know
	\[
	\theta_1(Q_\pm)\le-\sum_{j=2}^n\theta_j(Q_\pm)<0.
	\]
	Thus each of $Q_\pm$ has exactly one negative eigenvalue and $n-1$ positive eigenvalues on $Z$.
	
	\textbf{Step 2: Construction of the map $T$ on $Z$.} For $\zeta\in Z$, set
	\[
	\ell_\pm=\ip{f_\pm}{u_1}>0,\quad m_\pm=\norm{f_\pm}_{L^2(\Omega)}^2>0,
	\]
	where we have used Lemma \ref{equal energy lemma}. Now, we choose constants $h_+$ and $h_-$ such that $\psi+h_+f_++h_-f_-$ is $L^2$-orthogonal to both $u_1$ and $v$ for some given function $\psi$. A priori calculation indicates that
	\[
	D(\zeta)\begin{pmatrix}h_+\\h_-\end{pmatrix}
	=-\begin{pmatrix}\ip{\psi}{u_1}\\\ip{\psi}{v}\end{pmatrix},
	\]
	where
	\begin{equation}
		D(\zeta)=
		\begin{pmatrix}
			\ip{f_+}{u_1}&\ip{f_-}{u_1}\\
			\ip{f_+}{v}&\ip{f_-}{v}
		\end{pmatrix}
		=\begin{pmatrix}
			\ell_+&\ell_-\\
			m_++t\ell_+&-m_-+t\ell_-
		\end{pmatrix}.
		\label{D zeta}
	\end{equation}
	In particular,
	\begin{equation}
		\det D(\zeta)=-\ell_+m_--\ell_-m_+<0.
		\label{D determinant}
	\end{equation}
	For $c\in\R^n$, define
	\[
	\psi_c\coloneqq\sum_{i=1}^n c_i\psi_i^+,
	\quad
	\begin{pmatrix}h_+\\h_-\end{pmatrix}
	\coloneqq -D(\zeta)^{-1}
	\begin{pmatrix}\ip{\psi_c}{u_1}\\\ip{\psi_c}{v}\end{pmatrix},
	\]
	and 
	\begin{equation}
		z_\zeta(c)\coloneqq\psi_c+h_+f_++h_-f_-.
		\label{top:correctedtrial}
	\end{equation}
	The function $z_\zeta$ is linear in $c$ and satisfies
	\begin{equation}
		\ip{z_\zeta(c)}{u_1}=0,\quad
		\ip{z_\zeta(c)}{v}=0.
		\label{z-zeta orthogonal}
	\end{equation}

	Let $L_\zeta\subset\R^n$ be the one-dimensional negative
	eigenspace of $Q_+(\zeta)$. For $0\ne c\in L_\zeta$, by the orthogonal properties established in
	Step 1, we have
	\begin{align}
		q_{\beta^2}^{\Omega}\bigl(z_\zeta(c)\bigr)=c^{\mathsf T}Q_+(\zeta)c+h_+^2q_{\beta^2}^{\Omega}(f_+)+h_-^2q_{\beta^2}^{\Omega}(f_-)\le\theta_1(Q_+(\zeta))|c|^2<0.
		\label{z-zeta is negative}
	\end{align}
	Define $T(\zeta):\R^n\to\R^3$ by
	\begin{equation}
		T(\zeta)c\coloneqq
		\begin{pmatrix}
			\ip{z_\zeta(c)}{u_2}\\
			\ip{z_\zeta(c)}{u_3}\\
			\ip{z_\zeta(c)}{u_4}
		\end{pmatrix}.
		\label{def of map T}
	\end{equation}
	Note that this is just the coordinate vector of the $L^2$-orthogonal projection of $z_\zeta(c)$ onto $V$. For $v=\sum_{j=2}^4 a_ju_j$, \eqref{z-zeta orthogonal} gives
	\[
	v\cdot T(\zeta)c
	=\sum_{j=2}^4 a_j\ip{z_\zeta(c)}{u_j}
	=\ip{z_\zeta(c)}{v}=0.
	\]
	If $T(\zeta)c=0$ for some $0\ne c\in L_\zeta$, then $z_\zeta(c)\perp V$ and $z_\zeta(c)\perp u_1$, so $z_\zeta(c)\in E_4^\perp$. This is a contradiction by \eqref{energy estimate in ortho complement} and \eqref{z-zeta is negative}. Hence, \eqref{orthogonal condition Pi} holds on $Z$.
	
	\textbf{Step 3: Spectral bounds and continuity of $\Pi$.} Notice that $Q_\pm$ and $D$ are continuous in $\zeta$. The ordered eigenvalues are obviously continuous as well. By the compactness of $Z$, \eqref{D determinant}, and results in Step 1, we know there exists a constant $\varepsilon_0>0$ such that, on $Z$,
	\begin{equation}
		|\det D|\geq\varepsilon_0,\quad\theta_1(Q_\pm)\leq-\varepsilon_0,\quad	\theta_2(Q_\pm)\geq\varepsilon_0.
		\label{continuity start}
	\end{equation}
	By continuity, there is an open neighborhood $\mathcal O_0$ of
	$Z$ in $S(V)\times Y$ such that
	\[
	|\det D|>\frac{\varepsilon_0}{2},\quad\theta_1(Q_\pm)<-\frac{\varepsilon_0}{2},\quad\theta_2(Q_\pm)>\frac{\varepsilon_0}{2},\quad \forall\,\zeta\in \mathcal{O}_0.
	\]
	Thus $D$ is invertible and both $Q_+$ and $Q_-$ have exactly one negative eigenvalue and $n-1$ positive eigenvalues in $\mathcal O_0$.
	
	For $\zeta\in\mathcal O_0$, let $\Pi(\zeta)$ be the orthogonal projection onto the negative eigenspace of $Q_+(\zeta)$. Choose a unit eigenvector $c\in\R^n$ corresponding to the unique negative eigenvalue of $Q_+(\zeta)$. Hence
	\[
	|c|=1,\quad	Q_+(\zeta)c=\theta_1(Q_+(\zeta))c,
	\]
	and the orthogonal projection is given by
	\[
	\Pi(\zeta)=cc^{\mathsf T},\quad \Pi(\zeta)b=(b\cdot c)c,\quad b\in\R^n.
	\]
	The projection matrix is unchanged when $c$ is replaced by $-c$. Also notice that this choice is made at each fixed $\zeta$. Take $\zeta_j\to\zeta$ in $\mathcal O_0$ and choose unit eigenvector $c_j$ corresponding to the unique negative eigenvalue of $Q_+(\zeta_j)$. Up to a subsequence, we may assume that $c_j\to c$ with $|c|=1$. Since
	\[
	Q_+(\zeta_j)c_j=\theta_1(Q_+(\zeta_j))c_j,
	\]
	by the continuity of $\eta_1$, we know $c$ spans the negative eigenspace of $Q_+(\zeta)$ and
	$$
	\Pi(\zeta_j)=c_jc_j^{\mathsf T}\to cc^{\mathsf T}=\Pi(\zeta).
	$$
	Hence the projection $\Pi$ is continuous.
	
	We next construct a continuous unit negative eigenvector on a neighborhood of each fixed point $\zeta_0\in\mathcal O_0$. Choose
	$$
	c_0\in\operatorname{range}\Pi(\zeta_0),
	\quad |c_0|=1.
	$$
	Since $\Pi(\zeta_0)$ is the orthogonal projection onto this space, $\Pi(\zeta_0)c_0=c_0$. Since $\Pi$ is continuous, there exists an open neighborhood $U_{\zeta_0}\subset\mathcal{O}_0$ of $\zeta_0$ such that
	\[
	|\Pi(\zeta)c_0-c_0|<\frac12, \quad\zeta\in\mathcal U_{\zeta_0}.
	\]
	Therefore,
	$$
	|\Pi(\zeta)c_0|\geq |c_0|-|\Pi(\zeta)c_0-c_0|\geq\frac{1}{2}
	$$
	and we may define
	\begin{equation}
		c(\zeta)=\frac{\Pi(\zeta)c_0}{|\Pi(\zeta)c_0|},\quad\zeta\in\mathcal U_{\zeta_0}.
		\label{local continuous eigenvector}
	\end{equation}
	By the definition, we know $|c(\zeta)|=1$ and $c(\zeta)\in\operatorname{range}\Pi(\zeta)$. Since $\operatorname{range}\Pi(\zeta)$ is precisely the negative eigenspace of $Q_+(\zeta)$, we have
	\[
	Q_+(\zeta)c(\zeta)=\theta_1(Q_+(\zeta))c(\zeta).
	\]
	Moreover, $c:\mathcal U_{\zeta_0}\to\R^n$ is continuous, with $c(\zeta_0)=c_0$.
	
	\textbf{Step 4: Extension of $T$.} In the neighborhood $\mathcal{O}_0$, we may also use \eqref{D zeta}, \eqref{top:correctedtrial}, and \eqref{def of map T} to define $z_\zeta(c)$ and $T(\zeta)$. These formulas are well-defined because $D(\zeta)$ is invertible for all $\zeta\in\mathcal{O}_0$. By the continuity property established before, we know that the map $(\zeta,c)\mapsto z_\zeta(c)$ is continuous into $H_0^1(\Omega)$. For each fixed $\zeta$, all these formulas are linear in $c$ and it is not hard to verify that $T$ is continuous in $\mathcal O_0$. By the definition of $z_\zeta$, we also have
	\begin{equation}
		\ip{z_\zeta(c)}{u_1}=0,\quad\ip{z_\zeta(c)}{v}=0,\quad\forall\,\zeta\in\mathcal O_0\label{orthogonal z-zeta and u1 v neighborhood O0}
	\end{equation}
	for all $c\in\Rn$. In particular, in $\mathcal O_0$, we have
	\[
	v\cdot T(\zeta)c=\ip{z_\zeta(c)}{v}=0.
	\]

	It remains to prove that $T(\zeta)c$ is non-zero when $c$ is a non-zero vector in $\operatorname{range}\Pi(\zeta)$, after restricting to a smaller neighborhood than $\mathcal O_0$. For each $\zeta\in\mathcal O_0$, choose
	$c\in\operatorname{range}\Pi(\zeta)$ with $|c|=1$
	and set
	\[
	E(\zeta)=q_{\beta^2}^{\Omega}\bigl(z_\zeta(c)\bigr).
	\]
	This definition does not depend on the choice of the unit vector $c$. Indeed, the only other choice is $-c$, and the linearity of $z_\zeta$ gives
	\[
	q_{\beta^2}^{\Omega}\bigl(z_\zeta(-c)\bigr)=q_{\beta^2}^{\Omega}\bigl(-z_\zeta(c)\bigr)=q_{\beta^2}^{\Omega}\bigl(z_\zeta(c)\bigr).
	\]
	To establish the continuity of $E$, fix $\zeta_0\in\mathcal O_0$ and use \eqref{local continuous eigenvector} to choose a continuous unit negative eigenvector $c(\zeta)$ in a neighborhood of $\zeta_0$. In this neighborhood, we know
	\[
	E(\zeta)
	=q_{\beta^2}^{\Omega}\bigl(z_\zeta(c(\zeta))\bigr),
	\]
	and we immediately know $E$ is continuous near $\zeta_0$ since $\zeta\mapsto z_\zeta(c(\zeta))$ and $q_{\beta^2}^{\Omega}$ are continuous. Since $\zeta_0$ was arbitrary chosen, we know $E$ is continuous in $\mathcal O_0$.
	
	For $\zeta\in Z$, from \eqref{z-zeta is negative} and \eqref{continuity start}, we know
	\[
	E(\zeta)\le\theta_1(Q_+(\zeta))\le-\varepsilon_0.
	\]
	Define
	\[
	\mathcal O\coloneqq\left\{\zeta\in\mathcal O_0:E(\zeta)<-\frac{\varepsilon_0}{2}\right\},
	\]
	which is obviously an open set and contains $Z$. If $\zeta\in\mathcal O$ and
	$0\ne c\in\operatorname{range}\Pi(\zeta)$,
	then $c/|c|$ is a unit vector in $\operatorname{range}\Pi(\zeta)$. The linearity of $z_\zeta$ and homogeneity of the quadratic-form then give
	\begin{equation}
			q_{\beta^2}^{\Omega}\bigl(z_\zeta(c)\bigr)=|c|^2q_{\beta^2}^{\Omega}\bigl(z_\zeta(c/|c|)\bigr)=|c|^2E(\zeta)<-\frac{\varepsilon_0}{2}|c|^2.
	\label{z zeta neighborhood q beta}
	\end{equation}
	If $T(\zeta)c=0$ for some $\zeta\in\mathcal O$ and $0\ne c\in\operatorname{range}\Pi(\zeta)$, then by the definition of $T$, we know $z_\zeta(c)$ is orthogonal to $u_2,u_3,u_4$. Together with \eqref{orthogonal z-zeta and u1 v neighborhood O0}, we have $z_\zeta(c)\in E_4^\perp$. By \eqref{energy estimate in ortho complement}, we have
	\[
	q_{\beta^2}^{\Omega}\bigl(z_\zeta(c)\bigr)\geq(\lambda_5-\beta^2)\norm{z_\zeta(c)}_{L^2(\Omega)}^2\geq0,
	\]
	contradicting \eqref{z zeta neighborhood q beta}. The proof is complete.
\end{proof}

We next show that the matrices constructed in Proposition \ref{neighborhood of Z} are incompatible with the degree property of the balance and center equations $\mathcal{F}=0$.
\begin{lemma}\label{contradiction obstruction}
	Let $Y\subset\R^m$ be a bounded open set and $U\subset\R^m$ be an open neighborhood of $\overline Y$. Let
	\[
	\mathcal F:\mathbb S^2\times U\longrightarrow\R^m
	\]
	be a continuous map. Suppose that
	$$
	\mathcal F(v,y)\ne0\quad\text{for all }v\in\mathbb S^2,\ y\in\partial Y,
	$$
	and
	$$
		\deg(\mathcal F(v,\cdot),Y,0)=1\quad\text{for all }v\in\mathbb S^2.
	$$
	Define
	\[
	K_0\coloneqq\{(v,y)\in\mathbb S^2\times\overline Y:\mathcal F(v,y)=0\}.
	\]
	
	Let $r\ge1$ be an integer, and let $\mathcal O$ be any open neighborhood of $K_0$ in $\mathbb S^2\times U$. Then there are no continuous maps
	\[
	\Pi:\mathcal O\longrightarrow\R^{r\times r},\quad T:\mathcal O\longrightarrow\R^{3\times r}
	\]
	satisfying both of the following conditions at every $\zeta=(v,y)\in\mathcal O$:
	\begin{itemize}
		\item[1.] $\Pi(\zeta)$ is an orthogonal projection of rank one:
		\[
		\Pi(\zeta)^{\mathsf T}=\Pi(\zeta),\quad	\Pi(\zeta)^2=\Pi(\zeta),\quad\operatorname{rank}\Pi(\zeta)=1.
		\]
		
		\item[2.] $T(\zeta)$ maps $\R^r$ into $v^\perp$ and maps every non-zero vector in $\operatorname{range}\Pi(\zeta)$ to a non-zero vector:
		\begin{equation}
			v\cdot T(\zeta)c=0,\quad c\in\R^r,\quad\text{and}\quad T(\zeta)c\ne0,
			\quad 0\ne c\in\operatorname{range}\Pi(\zeta).
			\label{property of T}
		\end{equation}
	\end{itemize}
\end{lemma}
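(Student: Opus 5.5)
The idea is to turn the data $(\Pi,T)$ into a splitting of a pulled-back copy of $T\mathbb S^2$ over a closed oriented surface mapping with degree one onto $\mathbb S^2$, and then compare Euler numbers. Throughout, identify $\mathbb S^2\subset\R^3$, so that the fiber of $T\mathbb S^2$ at $v$ is $v^\perp$. Suppose $\Pi,T$ exist on $\mathcal O$. Then $\mathcal L_\zeta\coloneqq\operatorname{range}\Pi(\zeta)$ is a real line bundle over $\mathcal O$, since $\Pi$ is a continuous family of rank-one projections. By \eqref{property of T}, the map $c\mapsto T(\zeta)c$ restricted to $\mathcal L_\zeta$ is a fiberwise injective bundle map from $\mathcal L$ into $\mathrm{pr}^*T\mathbb S^2$, where $\mathrm{pr}(v,y)=v$. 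Hence $\mathrm{pr}^*T\mathbb S^2|_{\mathcal O}\cong\mathcal L\oplus\mathcal L'$, where $\mathcal L'_\zeta$ is the orthogonal complement of $T(\zeta)\mathcal L_\zeta$ inside $v^\perp$. Since $T\mathbb S^2$ is orientable, the first Stiefel--Whitney classes satisfy $w_1(\mathcal L)=w_1(\mathcal L')$, so $\mathcal L'\cong\mathcal L$.

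\textbf{Step 1: a degree-one surface inside $\mathcal O$.} The set $C\coloneqq(\mathbb S^2\times\overline Y)\setminus\mathcal O$ is compact and disjoint from $K_0$, so $\delta\coloneqq\min\bigl(\min_C|\mathcal F|,\ \min_{\mathbb S^2\times\partial Y}|\mathcal F|\bigr)>0$.
\begin{itemize}
\item Approximate $\mathcal F$ uniformly on $\mathbb S^2\times\overline Y$ within $\delta/2$ by a smooth map $\widetilde{\mathcal F}$.
\item Using Sard's theorem, replace $\widetilde{\mathcal F}$ by $\widetilde{\mathcal F}-a$ with $|a|$ small, so that $0$ is a regular value on $\mathbb S^2\times Y$.
\end{itemize}
Then $M\coloneqq\widetilde{\mathcal F}^{-1}(0)\cap(\mathbb S^2\times\overline Y)$ is a compact smooth $2$-manifold without boundary. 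It lies in $\mathcal O\cap(\mathbb S^2\times Y)$ because $\widetilde{\mathcal F}\neq0$ on $C$ and on $\mathbb S^2\times\partial Y$. It is oriented as the regular preimage of a regular value in the oriented manifold $\mathbb S^2\times Y$.

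For fixed $v$, the straight-line homotopy between $\mathcal F(v,\cdot)$ and $\widetilde{\mathcal F}(v,\cdot)$ has no zeros on $\partial Y$, so $\deg(\widetilde{\mathcal F}(v,\cdot),Y,0)=1$. Taking $v$ a regular value of $p\coloneqq\mathrm{pr}|_M$, the points of $p^{-1}(v)$ are exactly the zeros of $\widetilde{\mathcal F}(v,\cdot)$. With compatible orientation conventions their signed count is this degree, so $\deg p=1$. This orientation bookkeeping is the step I expect to need the most care. If it proves troublesome, I will only use $\deg p\equiv1 \pmod 2$ and work with $\mathbb Z_2$-coefficients, as in Step 2 below.

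\textbf{Step 2: Euler number contradiction.} Restricting the splitting to $M$ gives $p^*T\mathbb S^2\cong\mathcal L\oplus\mathcal L$ over $M$. The left side has Euler number $\deg p\cdot\chi(\mathbb S^2)=2$. For the right side, $\mathcal L\oplus\mathcal L\cong\mathcal L\otimes_{\R}\mathbb C$ is the complexification of a real line bundle. Its Chern class $c_1$ is therefore $2$-torsion in $H^2(M;\mathbb Z)\cong\mathbb Z^{\#\text{components}}$, which is torsion-free, so $c_1=0$. The Euler class of an oriented rank-two bundle equals $\pm c_1$ of the associated complex structure, so the Euler number of $\mathcal L\oplus\mathcal L$ is $0$ whatever orientation is chosen. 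Thus $2=0$, a contradiction.

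A more elementary alternative for Step 2 is to pass to the orientation double cover $\widetilde M$ of $\mathcal L$. On $\widetilde M$ there is a global unit section $c$ of $\mathcal L$, and $T(\zeta)c$ is then a nowhere-vanishing section of the pulled-back tangent bundle. The projection $\widetilde M\to\mathbb S^2$ has degree $2\deg p\neq0$, so this nonvanishing section contradicts the nonzero Euler number $2\cdot2=4$ of the pullback; this is a hairy-ball-type obstruction.
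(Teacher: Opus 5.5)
Your proof is correct, but it reaches the contradiction by a genuinely different route from the paper's.

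\textbf{Your route.} You use smooth approximation and Sard's theorem to replace the possibly wild compact set $K_0$ by a closed oriented surface $M\subset\mathcal O\cap(\mathbb S^2\times Y)$ with $\deg(p\colon M\to\mathbb S^2)=1$. The orientation bookkeeping you worried about does work out. At a point of $M$, write $D\widetilde{\mathcal F}=[A\;B]$ with $B=D_y\widetilde{\mathcal F}$. Then $v$ is a regular value of $p$ exactly when $B$ is invertible, and the local sign of $p$ is $\operatorname{sign}\det B$ for the preimage orientation. You then use a characteristic-class obstruction. The data $(\Pi,T)$ split $p^*T\mathbb S^2\cong\mathcal L\oplus\mathcal L\cong\mathcal L\otimes_{\R}\mathbb C$. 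The Euler class of this bundle is $\pm c_1$, a $2$-torsion class in the torsion-free group $H^2(M;\mathbb Z)$, hence zero. On the other hand, $\langle e(p^*T\mathbb S^2),[M]\rangle=2\deg p=2$.

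\textbf{The paper's route.} The paper never produces a manifold. It forms the rank-one projection $P=T\Pi T^{\mathsf T}/\tr(T\Pi T^{\mathsf T})$ onto a line in $v^\perp$. It encodes the two unit vectors of $\operatorname{range}P$ as the zeros of $E_P(c)=(I_3-P)c+(|c|^2-1)Pc$ on an annulus, and shows via the reflection $I_3-2P$ that this doubles the fiber degree. It then rotates $v$ to $-v$ along $c$ by $\cos(\pi s)v+\sin(\pi s)Pc$, obtaining Brouwer degrees $2$ and $-2$ on a Euclidean domain in $\R^{m+6}$.

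\textbf{Comparison.} Your ``elementary alternative'' (orientation double cover of $\mathcal L$, tautological section, hairy-ball obstruction with Euler number $4$) is essentially the paper's proof in bundle language. The common zero set of $\mathcal F$ and $E_P$ plays the role of $\widetilde M$. The rotation homotopy is the homotopy from the identity to the antipodal map supplied by the nonvanishing tangent field $T(\zeta)c$. Your main route is shorter and identifies the obstruction transparently as the Euler class of $T\mathbb S^2$. It relies on transversality, orientations of regular preimages, the $w_1$-classification of line bundles, and Chern classes of complexifications. The paper stays inside Brouwer degree theory (homotopy invariance, excision, product formula) and needs no regularity of $K_0$, at the cost of a considerably longer computation.

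\textbf{A caution on your fallback.} Use $\deg p\equiv1\pmod 2$ only to conclude that $\deg p\neq0$, and keep the Euler-class comparison integral. With $\mathbb Z_2$-coefficients there is no obstruction at all, because $w_2(T\mathbb S^2)=0$: $T\mathbb S^2$ is stably trivial, and $\chi(\mathbb S^2)$ is even.
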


\begin{proof}
	We proceed by contradiction. Assume that, for some integer $r\ge1$, there exist continuous maps $\Pi$ and $T$, defined on an open neighborhood $\mathcal O$ of $K_0$, that satisfy the stated conditions.
	
	\textbf{Step 1: A continuous projection in $\R^3$.} We first construct the orthogonal projection onto $T(v,y)(\operatorname{range}\Pi(v,y))\subset v^\perp$. Fix $(v,y)\in\mathcal O$ and choose a unit vector $q\in\R^r$ such that $\spanop \{q\}=\operatorname{range}\Pi(v,y)$. Since $\Pi(v,y)$ is an orthogonal projection of rank one, we may write
	\[
	\Pi(v,y)=qq^{\mathsf T}.
	\]
	Moreover, by \eqref{property of T}, we know $T(v,y)q\ne0$ and $v\cdot T(v,y)q=0$. Therefore, we have
	$$
	T(v,y)\Pi(v,y)T(v,y)^{\mathsf T}=(T(v,y)q)(T(v,y)q)^{\mathsf T}
	$$
	and
	$$
	\tr\bigl(T(v,y)\Pi(v,y)T(v,y)^{\mathsf T}\bigr)=|T(v,y)q|^2>0.
	$$
	We may thus define
	\begin{equation}
		P(v,y)\coloneqq
		\frac{T(v,y)\Pi(v,y)T(v,y)^{\mathsf T}}
		{\tr\bigl(T(v,y)\Pi(v,y)T(v,y)^{\mathsf T}\bigr)}.
		\label{projection lemma5.2}
	\end{equation}
	At $(v,y)$, we may write
	$$
	P(v,y)=aa^{\mathsf T},\quad a=\frac{T(v,y)q}{|T(v,y)q|}.
	$$
	Since $|a|=1$ and $a\cdot v=0$, the matrix $P(v,y)$ is the orthogonal projection onto $\spanop \{a\}$. In particular,
	\begin{equation}
		P^{\mathsf T}=P,\quad P^2=P,\quad\operatorname{rank}P=1,\quad Pv=0.
		\label{property of P}
	\end{equation}
	Since $\Pi$ and $T$ are continuous and the denominator in \eqref{projection lemma5.2} is positive, we know $P$ is continuous in $\mathcal O$.

	Next, we choose a neighborhood $W$ of $K_0$ whose closure is compact and contained in $\mathcal O\cap(\mathbb S^2\times Y)$. By the continuity of $\mathcal F$, we know $K_0$ is compact. Since $\mathcal{F}(v,y)\neq0$ for all $(v,y)\in \mathbb S^2\times\partial Y$, there holds
	$$
	K_0\subset\mathcal O\cap(\mathbb S^2\times Y).
	$$
	Since $\mathcal O\cap(\mathbb S^2\times Y)$ is relatively open in $\mathbb S^2\times U$, around each point of $K_0$, we can choose a sufficiently small relative open ball whose closure is compact and contained in $\mathcal O\cap(\mathbb S^2\times Y)$. By the compactness of $K_0$, we may choose finitely many such balls that cover $K_0$. Let $W$ be their union, then $W$ is relatively open in $\mathbb S^2\times U$ and satisfies
	\begin{equation}
			K_0\subset W,\quad\overline W\subset\mathcal O\cap(\mathbb S^2\times Y),\quad\overline W\text{ is compact}.
		\label{the new nbhd W}
	\end{equation}
	Since $\overline W\subset\mathcal O$, the projection $P$ defined above is continuous on $\overline W$.
	
	\textbf{Step 2: An auxiliary degree calculation.} Denote
	\[
	B_\rho=\{c\in\R^3:|c|<\rho\},\quad\mathcal A=B_2\setminus\overline{B_{1/2}}.
	\]
	For a rank-one orthogonal projection $P$ in $\R^3$, define
	\begin{equation}
		E_P(c)\coloneqq (I_3-P)c+(|c|^2-1)Pc.
		\label{Ep definition}
	\end{equation}
	The two terms on the right-hand side are orthogonal, so
	\[
	|E_P(c)|^2=|(I_3-P)c|^2+(|c|^2-1)^2|Pc|^2.
	\]
	For $c\in\overline{\mathcal A}$, we know $c\neq0$ and hence
	\begin{equation}
		E_P(c)=0\quad\Longleftrightarrow\quad Pc=c\,\,\text{ and }\,\,|c|=1.
		\label{Ep zeros}
	\end{equation}
	Thus the zeros of $E_P$ in $\overline{\mathcal A}$ are exactly the two unit vectors in $\operatorname{range}P$, which shows that $E_P$ has no zeros on $\partial\mathcal{A}$. To compute the degree, we determine the contribution of each zero.
	
	Let $c_*$ be a zero of $E_P$. Since $Pc_*=c_*$ and $|c_*|=1$, by differentiating \eqref{Ep definition}, we see that for every $h\in\R^3$,
	\[
	\begin{aligned}
		DE_P(c_*)h&=(I_3-P)h+2(c_*\cdot h)Pc_*+(|c_*|^2-1)Ph\\
		&=(I_3-P)h+2(c_*\cdot h)c_*.
	\end{aligned}
	\]
	Since $\spanop\{c_*\}=\operatorname{range}P$, we may write $P=c_*c_*^{\mathsf T}$. Therefore,
	\[
	DE_P(c_*)h=(I_3-P)h+2Ph=(I_3+P)h.
	\]
	The matrix $I_3+P$ acts as multiplication by $2$ on $\operatorname{range}P$ and as the identity map on the two-dimensional orthogonal complement of $\operatorname{range}P$. Hence
	\[
	\det DE_P(c_*)=2>0.
	\]
	Both zeros are therefore non-degenerate and each has local degree $+1$. Since in $\overline{\mathcal A}$, there are no other zeros, we have
	\[
	\deg(E_P,\mathcal A,0)=1+1=2.
	\]

	We next consider the case where the projection $P$ depends
	continuously on a parameter $y$. Let $\Theta\subset\R^m$ be a
	bounded open set, and let
	\[
	f:\overline{\Theta}\longrightarrow\R^m
	\]
	be a continuous map satisfying
	\[
	f(y)\ne0
	\qquad\text{for every }y\in\partial\Theta.
	\]
	Suppose that
	\[
	P:\overline{\Theta}\longrightarrow\R^{3\times3}
	\]
	is continuous and that, for every $y\in\overline{\Theta}$,
	the matrix $P(y)$ is a rank-one orthogonal projection. Define
	\[
	d_f\coloneqq\deg(f,\Theta,0),
	\qquad
	\mathcal G(y,c)
	\coloneqq
	\bigl(f(y),E_{P(y)}(c)\bigr),
	\qquad
	(y,c)\in\overline{\Theta}\times\overline{B_2}.
	\]
	The map $\mathcal G$ is continuous. For
	$(y,c)\in\partial(\Theta\times\mathcal A)$, either
	$y\in\partial\Theta$ or $|c|\in\{1/2,2\}$. In the first case,
	$f(y)\ne0$, while in the second case,
	$E_{P(y)}(c)\ne0$ by~\eqref{Ep zeros}. Thus $\mathcal G$ has
	no zeros on $\partial(\Theta\times\mathcal A)$, and its
	Brouwer degree on $\Theta\times\mathcal A$ is well defined.
	
	We claim that
	\begin{equation}
		\deg(\mathcal G,\Theta\times\mathcal A,0)=2d_f.
		\label{degree of G on annulus}
	\end{equation}
	We first compute the degrees on $\Theta\times B_2$ and
	$\Theta\times B_{1/2}$, and then use the additivity property.
	
	We begin with $\Theta\times B_2$. For $0\le t\le1$, define
	\[
	\mathcal G_t^+(y,c)\coloneqq
	\begin{pmatrix}
		f(y)\\
		(I_3-P(y))c+
		\bigl[(1-t)(|c|^2-1)+t\bigr]P(y)c
	\end{pmatrix}.
	\]
	This is a continuous homotopy from $\mathcal G$ to the map
	\[
	(y,c)\longmapsto(f(y),c).
	\]
	We verify that $\mathcal G_t^+$ has no zeros on
	$\partial(\Theta\times B_2)$. If $y\in\partial\Theta$, then
	the first component of $\mathcal G_t^+$ is nonzero. If
	$|c|=2$, the coefficient of $P(y)c$ in the second component is
	\[
	(1-t)(4-1)+t=3-2t\ge1,
	\]
	and hence the second component is also nonzero. The homotopy
	invariance and product formula for the Brouwer degree therefore
	give
	\begin{equation}
		\begin{aligned}
			\deg(\mathcal G,\Theta\times B_2,0)
			&=\deg\bigl((f(y),c),\Theta\times B_2,0\bigr)\\
			&=\deg(f,\Theta,0)
			\deg(\operatorname{Id},B_2,0)\\
			&=d_f.
		\end{aligned}
		\label{degree on large ball}
	\end{equation}
	
	Next, we compute the degree on $\Theta\times B_{1/2}$.
	Consider the homotopy
	\[
	\mathcal G_t^-(y,c)=
	\begin{pmatrix}
		f(y)\\
		(I_3-P(y))c+
		\bigl[(1-t)(|c|^2-1)-t\bigr]P(y)c
	\end{pmatrix},
	\qquad 0\le t\le1.
	\]
	The first component is nonzero when $y\in\partial\Theta$.
	On $|c|=1/2$, the coefficient of $P(y)c$ is
	\[
	(1-t)\left(\frac14-1\right)-t
	=-\frac34-\frac t4<0.
	\]
	Thus $\mathcal G_t^-$ has no zeros on
	$\partial(\Theta\times B_{1/2})$. At $t=1$,
	\[
	\mathcal G_1^-(y,c)
	=
	\bigl(f(y),R(y)c\bigr),
	\qquad
	R(y)=I_3-2P(y).
	\]
	
	The matrix $R(y)$ reverses the component of a vector in
	$\operatorname{range}P(y)$ and leaves its orthogonal component
	unchanged. Consequently,
	\[
	R(y)^2=I_3,
	\qquad
	|R(y)c|=|c|,
	\qquad
	\det R(y)=-1.
	\]
	Define
	\[
	J(y,c)\coloneqq\bigl(y,R(y)c\bigr).
	\]
	The map $J$ is continuous and satisfies
	\[
	J(J(y,c))
	=
	\bigl(y,R(y)^2c\bigr)
	=
	(y,c).
	\]
	Therefore, $J$ is a homeomorphism and $J^{-1}=J$. Since $J$
	leaves $y$ unchanged and preserves $|c|$, it follows that
	\[
	J\bigl(\Theta\times B_{1/2}\bigr)
	=
	\Theta\times B_{1/2},
	\]
	and similarly,
	\[
	J\bigl(\overline{\Theta}\times\overline{B}_{1/2}\bigr)
	=
	\overline{\Theta}\times\overline{B}_{1/2},
	\qquad
	J\bigl(\partial(\Theta\times B_{1/2})\bigr)
	=
	\partial(\Theta\times B_{1/2}).
	\]
	We now show that the local degree of $J$ is $-1$ at every
	point. Fix
	\[
	(y_0,c_0)\in\Theta\times B_{1/2}
	\]
	and choose an open ball $N$ centered at $(y_0,c_0)$ such that
	\[
	\overline N\subset\Theta\times B_{1/2}.
	\]
	Set
	\[
	\eta_0
	=
	J(y_0,c_0)
	=
	\bigl(y_0,R(y_0)c_0\bigr).
	\]
	On $\overline N$, define
	\[
	J_s(y,c)
	\coloneqq
	\bigl(y,[(1-s)R(y)+sR(y_0)]c\bigr),
	\qquad 0\le s\le1.
	\]
	For every $s\in[0,1]$, if $J_s(y,c)=\eta_0$, then
	$y=y_0$. At this value of $y$, the second component becomes
	$R(y_0)c$, and hence $c=c_0$. Therefore, $(y_0,c_0)$ is the
	unique preimage of $\eta_0$. In particular,
	\[
	\eta_0\notin J_s(\partial N)
	\qquad\text{for every }s\in[0,1].
	\]
	At $s=1$,
	\[
	J_1(y,c)=\bigl(y,R(y_0)c\bigr),
	\]
	whose derivative is
	\[
	\operatorname{diag}(I_m,R(y_0)).
	\]
	Since $\det R(y_0)=-1$, homotopy invariance gives
	\[
	\deg(J,N,\eta_0)
	=
	\deg(J_1,N,\eta_0)
	=
	-1.
	\]
	Since $(y_0,c_0)$ was arbitrarily chosen, we know $J$ has local degree $-1$ everywhere in $\Theta\times B_{1/2}$. Now, define
	\[
	\Phi(y,c)\coloneqq(f(y),c),
	\]
	then
	\[
	\mathcal G_1^-=\Phi\circ J.
	\]
	Since $J$ maps $\Theta\times B_{1/2}$ onto itself and has local degree $-1$ everywhere, the change-of-variables and product formula for the Brouwer degree gives
	\begin{equation}
		\begin{aligned}
			\deg(\mathcal G,\Theta\times B_{1/2},0)
			&=\deg(\Phi\circ J,\Theta\times B_{1/2},0)\\
			&=-\deg(\Phi,\Theta\times B_{1/2},0)\\
			&=-\deg(f,\Theta,0)
			\deg(\operatorname{Id},B_{1/2},0)\\
			&=-d_f.
		\end{aligned}
		\label{top:inner-degree}
	\end{equation}
	The same argument applies when $\Theta$ is disconnected, because the local degree of $J$ is $-1$ on every component.
	
	Finally, the disjoint open sets $\Theta\times B_{1/2}$ and $\Theta\times\mathcal A$ contain all zeros of $\mathcal G$ in $\Theta\times B_2$. Indeed, the sphere $|c|=1/2$ contains no zeros by \eqref{Ep zeros}. By the additivity property of the Brouwer degree, we have
	\[
	\deg(\mathcal G,\Theta\times B_2,0)=\deg(\mathcal G,\Theta\times B_{1/2},0)+\deg(\mathcal G,\Theta\times\mathcal A,0).
	\]
	Substituting \eqref{degree on large ball} and \eqref{top:inner-degree}, we obtain that
	\[
	\deg(\mathcal G,\Theta\times\mathcal A,0)=d_f-(-d_f)=2d_f.
	\]
	
	\textbf{Step 3: A homotopy from $v$ to $-v$.} The equation $E_{P(v,y)}(c)=0$ allows us to retain both unit
	vectors in $\operatorname{range}P(v,y)$ without choosing one of them continuously. Indeed, by \eqref{Ep zeros}, its zeros in $\overline{\mathcal A}$ are precisely the two unit vectors in this one-dimensional range. For either such vector $c$, \eqref{property of P} gives $c\perp v$, and hence
	\[
	s\mapsto \cos(\pi s)v+\sin(\pi s)c,
	\quad 0\le s\le1,
	\]
	is a path in $\mathbb S^2$ from $v$ to $-v$.
	
	To place this construction in a Euclidean domain on which the Brouwer degree is defined, we represent $v\in\mathbb S^2$ as $v=x/|x|$, where $x$ ranges over the annulus $\mathcal A$. Fix $v_*\in\mathbb S^2$ and define
	\begin{equation}
		\mathcal D\coloneqq\left\{
		(x,y,c)\in\mathbb R^3\times\mathbb R^m\times\mathbb R^3:
		x,c\in\mathcal A,\
		\left(\frac{x}{|x|},y\right)\in W
		\right\}.
		\label{aux domain}
	\end{equation}
	The map
	\[
	\Xi:\mathcal A\times U\times\mathcal A
	\rightarrow\mathbb S^2\times U,
	\qquad
	\Xi(x,y,c)=\left(\frac{x}{|x|},y\right),
	\]
	is continuous, and
	\[
	\mathcal D=\Xi^{-1}(W).
	\]
	Since $W$ is relatively open in $\mathbb S^2\times U$, we know $\mathcal D$ is open in $\mathbb R^{m+6}$. It is also bounded because $\mathcal A$ is bounded and $\overline W$ is compact. If $(x,y,c)\in\overline{\mathcal D}$, then
	\[
	\frac12\le |x|,|c|\le2,\quad\left(\frac{x}{|x|},y\right)\in\overline W.
	\]
	In particular, $x\neq0$, so $v=x/|x|$ is well defined on $\overline{\mathcal D}$. Moreover,
	\[
	\overline W
	\subset
	\mathcal O\cap(\mathbb S^2\times Y)
	\subset
	\mathcal O\cap(\mathbb S^2\times U).
	\], hence the maps $P(v,y)$ and $\mathcal F(v,y)$ are also well-defined for $(x,y,c)\in\overline{\mathcal D}$. For $0\le s\le1$, define
	\begin{equation}
		\mathcal H_s(x,y,c)
		\coloneqq
		\begin{pmatrix}
			|x|\bigl(\cos(\pi s)v+
			\sin(\pi s)P(v,y)c\bigr)-v_*\\
			\mathcal F(v,y)\\
			E_{P(v,y)}(c)
		\end{pmatrix},
		\quad
		v=\frac{x}{|x|}.
		\label{final homotopy import}
	\end{equation}
	$\mathcal H_s$ is a continuous map on $[0,1]\times\overline{\mathcal D}$. At the endpoints, we have
	\[
	\mathcal H_0(x,y,c)
	=
	\begin{pmatrix}
		x-v_*\\
		\mathcal F(v,y)\\
		E_{P(v,y)}(c)
	\end{pmatrix},
	\quad
	\mathcal H_1(x,y,c)
	=
	\begin{pmatrix}
		-x-v_*\\
		\mathcal F(v,y)\\
		E_{P(v,y)}(c)
	\end{pmatrix}.
	\]
	It remains to verify that no member of this homotopy vanishes on $\partial\mathcal D$. Suppose that
	\[
	\mathcal H_s(x,y,c)=0
	\]
	for some $s\in[0,1]$ and $(x,y,c)\in\overline{\mathcal D}$. Then we know
	\[
	\mathcal F(v,y)=0.
	\]
	Since $(v,y)\in\overline W\subset\mathbb S^2\times Y$, it follows from the definition of $K_0$ that
	\[
	(v,y)\in K_0\subset W.
	\]
	Since $E_{P(v,y)}(c)=0$, by \eqref{Ep zeros}, we know $P(v,y)c=c$ and $|c|=1$. Since $P(v,y)$ is symmetric and $P(v,y)v=0$, we have
	\[
	v\cdot c
	=
	v\cdot P(v,y)c
	=
	P(v,y)v\cdot c
	=
	0.
	\]
	From $|v|=|c|=1$, there holds
	\begin{align*}
		\left|\cos(\pi s)v+\sin(\pi s)c\right|^2
		&=
		\cos^2(\pi s)|v|^2+\sin^2(\pi s)|c|^2+2\cos(\pi s)\sin(\pi s)v\cdot c\\
		&=
		\cos^2(\pi s)+\sin^2(\pi s)\\
		&=1.
	\end{align*}
	Using $P(v,y)c=c$, we also have
	\[
	|x|\bigl(\cos(\pi s)v+\sin(\pi s)c\bigr)=v_*,
	\]
	from which we know $|x|=1$. Consequently, $x,c\in\mathcal A$, $(v,y)\in W$, and hence $(x,y,c)\in\mathcal D$ by
	\eqref{aux domain}. Therefore, we have proved that every zero of $\mathcal H_s$ in $\overline{\mathcal D}$ lies in the interior $\mathcal D$, and the Brouwer degree are well-defined. Finally, the homotopy invariance yields that
	\begin{equation}
		\deg(\mathcal H_0,\mathcal D,0)=\deg(\mathcal H_1,\mathcal D,0).
		\label{H0 and H1}
	\end{equation}
	
	\textbf{Step 4: The two endpoint degrees.} Fix $\sigma\in\{1,-1\}$. For simplicity, write $\mathcal J_1=\mathcal H_0$ and $\mathcal J_{-1}=\mathcal H_{1}$. If $\mathcal J_\sigma(x,y,c)=0$, then	$x=q_\sigma\coloneqq\sigma v_*$ and hence $v=q_\sigma$.
	The set
	\[
	K_\sigma=\{y\in\overline Y:\mathcal F(q_\sigma,y)=0\}
	\]
	is compact and lies in the open slice
	\[
	W_{q_\sigma}=\{y\in U:(q_\sigma,y)\in W\}\subset Y.
	\]
	Choose a bounded open set $\Theta_\sigma$ such that
	\[
	K_\sigma\subset\Theta_\sigma,\quad\overline{\Theta_\sigma}\subset W_{q_\sigma}.
	\]
	Since $\Theta_\sigma$ contains all zeros of $\mathcal F(q_\sigma,\cdot)$ in $\overline Y$, the excision property of Brouwer degree and the assumption on $\mathcal{F}$ give that
	\begin{equation}
		\deg\bigl(\mathcal F(q_\sigma,\cdot),\Theta_\sigma,0\bigr)=\deg\bigl(\mathcal F(q_\sigma,\cdot),Y,0\bigr)=1.
		\label{fiber degree}
	\end{equation}
	For each $y\in\overline{\Theta_\sigma}$, since $W$ is open, there exist neighborhoods $U_y$ of $q_\sigma$ in $\mathbb S^2$ and $N_y$ of $y$ in $\R^m$ such that
	\[
	U_y\times N_y\subset W.
	\]
	The collection $\{N_y:y\in\overline{\Theta_\sigma}\}$ is an open cover of the compact set
	$\overline{\Theta_\sigma}$. Hence there exist $y_1,\cdots,y_\ell\in\overline{\Theta_\sigma}$ such that
	\[
	\overline{\Theta_\sigma}\subset\bigcup_{j=1}^{\ell}N_{y_j}.
	\]
	Let
	\[
	U_\sigma\coloneqq\bigcap_{j=1}^{\ell}U_{y_j},
	\]
	then $U_\sigma$ is an open neighborhood of $q_\sigma$ in $\mathbb S^2$, and
	\[
	U_\sigma\times\overline{\Theta_\sigma}\subset W.
	\]
	Indeed, if $v\in U_\sigma$ and $y\in\overline{\Theta_\sigma}$, then $y\in N_{y_j}$ for some $j$, while
	$v\in U_\sigma\subset U_{y_j}$. Consequently,
	\[
	(v,y)\in U_{y_j}\times N_{y_j}\subset W.
	\]
	Now consider the continuous normalization map
	\[
	\rho:\mathcal A\rightarrow\mathbb S^2,\quad\rho(x)=\frac{x}{|x|}.
	\]
	Since $|q_\sigma|=1$, we have $q_\sigma\in\mathcal A$ and $\rho(q_\sigma)=q_\sigma$. Since $U_\sigma$ is an open neighborhood of $q_\sigma$, we may choose $\delta>0$ sufficiently small that
	\[
	\overline{B_\delta(q_\sigma)}\subset\mathcal A\cap\rho^{-1}(U_\sigma).
	\]
	It follows that
	\[
	\left(\frac{x}{|x|},y\right)\in W
	\]
	for every $x\in\overline{B_\delta(q_\sigma)}$ and $y\in\overline{\Theta_\sigma}$. Therefore, the open cylinder
	\[
	\mathcal C_\sigma\coloneqq B_\delta(q_\sigma)\times\Theta_\sigma\times\mathcal A
	\]
	is contained in $\mathcal D$. Moreover, every zero of $\mathcal J_\sigma$ in $\mathcal D$ belongs to
	$\mathcal C_\sigma$. Indeed, by $\mathcal J_\sigma(x,y,c)=0$, we know $x=q_\sigma$ and $y\in K_\sigma\subset\Theta_\sigma$. Hence the excision property of the Brouwer degree yields that
	\begin{equation}
		\deg(\mathcal J_\sigma,\mathcal D,0)=\deg(\mathcal J_\sigma,\mathcal C_\sigma,0).\label{qie chu}
	\end{equation}

	Fix $\sigma\in\{1,-1\}$. We now replace $v=x/|x|$ in the last two components of $\mathcal J_\sigma$
	by the fixed vector $q_\sigma$, while keeping the first component unchanged. More precisely, on $\overline{\mathcal C_\sigma}$, define
	\[
	\mathcal J_{\sigma,s}(x,y,c)=
	\begin{pmatrix}
		\sigma x-v_*\\
		(1-s)\mathcal F(v,y)+s\mathcal F(q_\sigma,y)\\
		(1-s)E_{P(v,y)}(c)+sE_{P(q_\sigma,y)}(c)
	\end{pmatrix},
	\quad
	v=\frac{x}{|x|},\quad 0\le s\le1.
	\]
	The choice of $\delta$ and $\Theta_\sigma$ ensures that all terms are defined and continuous on
	$[0,1]\times\overline{\mathcal C_\sigma}$. At $s=0$, this is precisely the original map $\mathcal J_\sigma$, restricted to $\overline{\mathcal C_\sigma}$. Suppose that
	\[
	\mathcal J_{\sigma,s}(x,y,c)=0
	\quad
	\text{for some }(x,y,c)\in\overline{\mathcal C_\sigma},
	\]
	then we know
	\[
	x=\sigma v_*=q_\sigma,
	\quad
	v=\frac{q_\sigma}{|q_\sigma|}=q_\sigma,
	\]
	and the last two components of the equation $\mathcal J_{\sigma,s}(x,y,c)=0$ reduce, for every $s$, to
	\[
	\mathcal F(q_\sigma,y)=0,\quad E_{P(q_\sigma,y)}(c)=0.
	\]
	Since $\overline{\Theta_\sigma}\subset Y$, we know $y\in K_\sigma\subset\Theta_\sigma$ and $c\in\mathcal A$ since $|c|=1$. Since $x=q_\sigma\in B_\delta(q_\sigma)$, we know
	\[
	(x,y,c)\in\mathcal C_\sigma,
	\]
	i.e., every zero of $\mathcal J_{\sigma,s}$ lies in the interior of the cylinder. For $s=1$, we have
	\[
	\mathcal J_{\sigma,1}(x,y,c)
	=
	\bigl(
	\sigma x-v_*,
	\mathcal F(q_\sigma,y),
	E_{P(q_\sigma,y)}(c)
	\bigr).
	\]
	By \eqref{qie chu} and the homotopy invariance, we obtain that
	\[
	\deg(\mathcal J_\sigma,\mathcal D,0)=\deg(\mathcal J_{\sigma,1},\mathcal C_\sigma,0).
	\]
	Notice that the first component of $\mathcal J_{\sigma,1}$ depends only on $x$, and the remaining components depend only on $(y,c)$. We can therefore apply the product formula. More concretely, the map $x\mapsto\sigma x-v_*$ has exactly one zero $q_\sigma$ in $B_\delta(q_\sigma)$, and its derivative is $\sigma I_3$. Hence
	\[
	\deg\bigl(\sigma x-v_*,B_\delta(q_\sigma),0\bigr)=\operatorname{sgn}\det(\sigma I_3)=\sigma.
	\]
	Besides, by \eqref{degree of G on annulus} and \eqref{fiber degree}, we have
	\[
	\begin{aligned}
		&\deg\Bigl(
		\bigl(\mathcal F(q_\sigma,y),E_{P(q_\sigma,y)}(c)\bigr),
		\Theta_\sigma\times\mathcal A,0
		\Bigr)=2\deg\bigl(\mathcal F(q_\sigma,\cdot),
		\Theta_\sigma,0\bigr)
		=2.
	\end{aligned}
	\]
Therefore, by the product formula, we know
	\[
	\deg(\mathcal J_\sigma,\mathcal D,0)=2\sigma.
	\]
	Consequently, since $\mathcal J_{1}=\mathcal H_0$ and $\mathcal J_{-1}=\mathcal H_1$, we have
	\begin{equation}
		\deg(\mathcal H_0,\mathcal D,0)=2,
		\quad
		\deg(\mathcal H_1,\mathcal D,0)=-2,
	\end{equation}
	which is obviously a contradiction to \eqref{H0 and H1}. The proof is complete.
\end{proof}

Now, we are ready to prove Theorem \ref{main result}.

\begin{proof}[Proof of Theorem \ref{main result}]
	Recall that we have normalized $\lambda_4(\Omega)=\alpha^2$ and assumed, for a contradiction, that $\lambda_5(\Omega)>\beta^2$. Set $m=1+2n$ and write $y=(t,A_+,A_-)\in\R^m$. We use the fixed orthonormal coordinates $u_2,u_3,u_4$ to identify $V$ with $\R^3$ and $S(V)$ with $\mathbb S^2$. Let
	\[
	\mathcal F:S(V)\times\R^m\longrightarrow\R^m
	\]
	be the map defined in \eqref{F joint map}. By Lemma \ref{balance center solve degree}, we know $\mathcal{F}$ is continuous, has no zeros on $S(V)\times\partial Y$, and satisfies
	\[
	\deg(\mathcal F(v,\cdot),Y,0)=1,\quad v\in S(V).
	\]
	Moreover, the zero set
	\begin{equation}
		Z=\{(v,t,A_+,A_-)\in S(V)\times\R^{1+2n}:
		\mathcal F(v,t,A_+,A_-)=0\}\nonumber
	\end{equation}
	is compact and contained in $S(V)\times Y$. By Proposition~\ref{neighborhood of Z}, there exists an open neighborhood $\mathcal O$ such that
	\[
	Z\subset\mathcal O\subset S(V)\times Y.
	\]
 Moreover, $\Pi(\zeta)\in\R^{n\times n}$ is a continuous rank-one orthogonal projection in $\mathcal O$ and $T(\zeta)\in\R^{3\times n}$ is continuous in $\mathcal O$. By \eqref{orthogonal condition Pi}, we know
 $$
 v\cdot T(\zeta)c=0,\quad  \forall\, c\in\R^n,	\quad T(\zeta)c\ne0,\quad	\forall\, c\in\operatorname{range}\Pi(\zeta)\setminus\{0\}
 $$
 for any $\zeta=(v,y)\in\mathcal O$. Since $Y$ is open in $\R^m$, the set $\mathcal O$ is also open in $S(V)\times\R^m$. However, these properties contradict Lemma \ref{contradiction obstruction}, applied with
	\[
	m=1+2n,\quad U=\R^m,\quad r=n,\quad K_0=Z.
	\]
Therefore, the assumption $\lambda_5(\Omega)>\beta^2$ is impossible, and we conclude that $\lambda_5(\Omega)\leq\beta^2$.

It remains to prove that the constant in \eqref{main result equation} is optimal. Let $D_0$ be the union of four unit balls with pairwise disjoint closures. Then
\[
\lambda_4(D_0)=\alpha^2,
\qquad
\lambda_5(D_0)=\beta^2.
\]
We join the balls by tubes of radius $\varepsilon$ along fixed line segments to obtain bounded connected
domains $D_\varepsilon$. A standard min-max and compactness argument then gives
\[
\lambda_j(D_\varepsilon)\rightarrow\lambda_j(D_0),
\quad j=4,5,
\]
as $\varepsilon\downarrow0$. Consequently,
\[
\frac{\lambda_5(D_\varepsilon)}
{\lambda_4(D_\varepsilon)}
\rightarrow
\frac{\beta^2}{\alpha^2}=\left(\frac{j_{n / 2,1}}{j_{n / 2-1,1}}\right)^2.
\]
The proof is complete.
\end{proof}

\section{Proof of Theorem \ref{main result for open set}}\label{section6}
Let $\Omega$ be a bounded open set in $\Rn$, $n\geq2$. Its connected components are open and pairwise disjoint. Note that $\Omega$ has at most countably many connected components, since every component contains a point with rational coordinates. The Dirichlet eigenvalues of $\Omega$ are obtained by collecting the Dirichlet eigenvalues of all its connected components and arranging them in non-decreasing order, counting multiplicity. If the same eigenvalue occurs on several components, its multiplicity on $\Omega$ is the sum of its multiplicities on those components. Now, we are ready to establish Theorem \ref{main result for open set}.

\begin{proof}[Proof of Theorem \ref{main result for open set}]
	Combining Theorem \ref{main result} and the excellent results established by Ashbaugh and Benguria \cite{AshbaughBenguria1991BullAMS,AshbaughBenguria1991Duke,AshbaughBenguria1992AnnMath,AshbaughBenguria1992CMP,AshbaughBenguria1993Edinburgh}, we know for every bounded domain $D$,
	\begin{equation}
		\lambda_{r+1}(D)\le \gamma_n\lambda_r(D),\quad \gamma_n=\left(\frac{j_{n / 2,1}}{j_{n / 2-1,1}}\right)^2,
		\quad r=1,2,3,4.\label{result for domain}
	\end{equation}
	Suppose for contradiction that there exists a bounded open set $\Omega$ such that
	\[
	\lambda_5(\Omega)>\gamma_n\lambda_4(\Omega).
	\]
	Denote $L=\lambda_4(\Omega)$ and $T=\gamma_nL$. Since $L>0$ and $\gamma_n>1$, we have
	\begin{equation}
		\lambda_5(\Omega)>T>L.\label{assumption on lambda 5}
	\end{equation}
	Consequently
	$$
	\#\{j\geq 1:\lambda_j(\Omega)\leq L\}=4.
	$$
	Choose a connected component $D$ of $\Omega$ such that $\lambda_1(D)\le L$. Such a component does exist because $\lambda_4(\Omega)=L$ and every eigenvalue of $\Omega$ is an eigenvalue of one of its connected components. Let
	\[
	r\coloneqq\#\{j\ge1:\lambda_j(D)\le L\}.
	\]
	By the definition of $D$, we know
	\[
	1\le r\le4,
	\quad
	\lambda_r(D)\le L<\lambda_{r+1}(D).
	\]
	By \eqref{result for domain}, one has
	\[
	L<\lambda_{r+1}(D)\le \gamma_n\lambda_r(D)\le \gamma_nL=T.
	\]
	Since $D$ is a connected component of $\Omega$,
	$\lambda_{r+1}(D)$ is also a Dirichlet eigenvalue
	of $\Omega$. The preceding estimate gives
	\[
	L<\lambda_{r+1}(D)\le T.
	\]
	On the other hand, since
	\[
	\lambda_j(\Omega)\le L,\qquad j=1,2,3,4,
	\]
	$\lambda_{r+1}(D)$ is strictly larger than the first four eigenvalues of $\Omega$. Therefore, we must have 
	\[
	\lambda_5(\Omega)\le\lambda_{r+1}(D)\le T.
	\]
	However, this is obviously a contradiction to \eqref{assumption on lambda 5}. Finally, let $D_0$ be the union of four unit balls with pairwise disjoint closures. We immediately know that the constant in \eqref{main result equation open set} is optimal. The proof is complete.
\end{proof}

\section*{Data Availability} No data was used for the research described in this paper.

\section*{Statements and Declarations}

\subsection*{Conflict of interest} On behalf of all authors, the corresponding author states that there is no conflict of interest.

\subsection*{AI usage statement} During the preparation of this work the authors used OpenAI model (ChatGPT-6 Astra) as an auxiliary tool for exploring proof strategies and developing mathematical arguments, as well as for language editing and the organization of the exposition. After using this tool, the authors reviewed and edited the content as needed and take full responsibility for all mathematical statements, arguments, citations, and conclusions in this paper.

	\bibliographystyle{amsplain}

\end{document}